\theoremstyle{plain}
\newtheorem{thm}{Theorem}[section]
\newtheorem{prop}[thm]{Proposition}
\newtheorem{lemma}[thm]{Lemma}
\newtheorem{cor}[thm]{Corollary}
\newtheorem{claim}[thm]{Claim}
\renewcommand{\latticebody}{\drop@{ }}
\theoremstyle{definition}
\newtheorem{defi}[thm]{Definition}
\theoremstyle{remark}
\newtheorem{remark}[thm]{Remark}
\newtheorem{ep}[thm]{Example}
\newcommand{\R}{\ensuremath{\mathbb R}}
\newcommand{\g}{\ensuremath{\frak{g}}}
\renewcommand{\d}{\text{d}}
\newcommand{\G}{\mathcal{G}}
\newcommand{\cK}{\mathcal{K}}
\newcommand{\cS}{\mathcal{S}}
\newcommand{\cG}{\mathcal{G}}
\newcommand{\cH}{\mathcal{H}}
\newcommand{\Lie}{\mathcal{L}}
\newcommand{\tto}{\rightrightarrows}						
\DeclareMathOperator{\id}{id} \DeclareMathOperator{\im}{Im}
\newcommand{\ta}{\tilde{a}}
\newcommand{\tg}{\tilde{\gamma}}
\newcommand{\talpha}{\tilde{\alpha}}
\newcommand{\K}{\mathcal K}                   
\newcommand{\Dd}{\mathcal{D}}
\def\U{{\mathcal U}}
\def\PB(#1,#2,#3,#4){
\left\{\begin{matrix}#1&\!\!\!\stackrel{?}{\longrightarrow}&\!\!\!#2\\
\downarrow&&\!\!\!\downarrow\\
#3&\!\!\!\stackrel{?}{\longrightarrow}&\!\!\!#4\end{matrix}\right\}}
\def\pb(#1,#2,#3,#4){ \hom(#1 \to #3, #2 \to #4)}
\def\codi(#1,#2,#3,#4){
\left\{\begin{matrix}#1&\!\!\!\longrightarrow&\!\!\!#2\\
\downarrow&&\!\!\!\downarrow\\
#3&\!\!\!\longrightarrow&\!\!\!#4\end{matrix}\right\}}
\newenvironment{Enumeratei}{
\begin{enumerate}[i)]
  \setlength{\itemsep}{1pt}
  \setlength{\parskip}{0pt}
  \setlength{\parsep}{0pt}}{\end{enumerate}}
\newenvironment{Itemize}{
\begin{itemize}
  \setlength{\itemsep}{2pt}
  \setlength{\parskip}{0pt}
  \setlength{\parsep}{0pt}}{\end{itemize}}
\newcommand {\comment}[1]{{\marginpar{*}\scriptsize{\bf Comments:}\scriptsize{\ #1 \ }}}
\begin{document}

\title{Lie algebroid Fibrations
\thanks
 {Supported by the German Research Foundation
(Deutsche Forschungsgemeinschaft (DFG)) through the Institutional
Strategy of the University of G\"ottingen, and the Funda\c{c}\~{a}o para a Ci\^{e}ncia e a Tecnologia (FCT / Portugal).
 }
} 
\author{Olivier Brahic  \\
Department of Mathematics, IST
\\\vspace{3mm}
email: brahic@math.ist.utl.pt\\
Chenchang Zhu\\
Courant Research Centre ``Higher Order Structures'',
University of G\"ottingen\\
email:zhu@uni-math.gwdg.de}
\date{}
\footnotetext{{\it{Keyword}: Lie algebroids, homotopty groups, representation up to homotopy,  integration, fibrations, extensions, Ehresmann connection}}

\footnotetext{{\it{MSC}}: Primary 53D17. Secondary 22A22,  14F35.}

\maketitle

\begin{abstract}
 A degree 1 non-negative graded super manifold equipped with a degree 1 vector field $Q$ satisfying $[Q, Q]=1$,
 namely a so-called NQ-1 manifold is, in plain differential geometry language, a Lie algebroid. 
 We introduce a notion of fibration for such super manifols, that essentially involves a complete Ehresmann connection. 
 As it is the case for Lie algebras, such fibrations turn out not to be just locally trivial products. 
We also define homotopy groups and prove the expected long exact sequence associated to a fibration.
In particular, Crainic and Fernandes's obstruction to the integrability of Lie algebroids is interpreted as the image
of a transgression map in this long exact sequence.
\end{abstract}

\tableofcontents

\section{Introduction}

There are many reasons to study the notion of fibration for Lie algebroids. For instance, one may want to understand
 better the topology behind the work \cite{cf} of Crainic and Fernandes on the obstruction to integrability of Lie algebroids. As we will see,
 these obstructions are the image by a transgression map that fits inside a long exact sequence of homotopy groups of Lie algebroids
 (see Cor. \ref{cor:A-L}).

On the other hand, it is well-known that the concept of Lie algebroid has deep mathematical physics background:
 a Lie algebroid is a degree 1 non-negative graded super manifold with a degree 1 vector field $Q$, satisfying of $[Q, Q]=1$.
 Such a super manifold is often called an NQ-1 manifold. Regardless the degree, NQ manifolds in general appear in BV
 theory and topological sigma models. In order to study NQ-bundles \cite{kotov-strobl}, a good definition of
 fibration is unavoidable.

A degree 1 non-negative graded super manifold can be modeled by a vector bundle with shifted degree $A[1]\to M$.  The function ring of $A[1]$ is the graded algebra 
\[ C(A) = C^\infty(M) \oplus \Gamma( A^*) \oplus \Gamma(\wedge^2 A^*) \oplus \cdots \]
A degree 1 vector field $Q$ is a degree 1 differential of this algebra. Equivalently, this means that we have a vector
bundle morphism (called the {\em anchor} later on) $\sharp: A \to TM$ and a Lie bracket $[\,\ ,\ ]$ on the space of the sections of $A$.
In fact for a homogeneous element $\xi\in C(A)$, \begin{multline}(Q \xi) (X_0,\dots,X_n) =  \sum_{i<j} (-1)^{i+j}\xi([X_i, X_j], X_0,\dots,\hat{X}_i,\dots,\hat{X}_j,\dots,X_n) 
 \\+ \sum_{i=0}^n (-1)^{i} (\sharp X_i)(\xi(X_0,\dots,\hat{X}_i,\dots,X_n)) \end{multline}
 generalizes the usual formula of the de Rham differential. The equation $[Q, Q]=0$ is then equivalent to the following conditions:
 \begin{eqnarray}\label{eq:rho-bracket}
                                       [X, fY]&=& f[X, Y]+ (\sharp X)(f)Y,\\
\label{eq:rho-bracket2}\oint_{X,Y,Z} [[X,Y],Z]&=&0, 
\end{eqnarray} for any $X, Y, Z\in \Gamma(A)$ and $f\in C^\infty(M)$. Here $\sharp$ denotes also the induced map $\Gamma(A) \to \Gamma(TM)$ on the level of sections.

A vector bundle $A\to M$ with $\sharp$ and $[\ \, ,\  ]$ as above, and satisfying \eqref{eq:rho-bracket} \eqref{eq:rho-bracket2}
 is called a {\em Lie algebroid}. For instance, when the base manifold $M$ is a point, $A$ is a Lie algebra.
 We refer the reader to \cite{cw} for more interesting examples and details on Lie algebroids.

In order to define a good notion of fibration in the category of Lie algebroids, one may start with a surjective morphism:
$$\xymatrix{A_E\ar@{->>}[r]^{\pi}\ar[d]&A_B\ar[d]\\
            E\ar[r]^{p} &B.}$$
 From this point of view there is a natural candidate, namely  $\K:=\ker \pi$, that should play the role of the fibered structure.
 Indeed, for any $x\in B$, the restriction $\K|_{E_x}$ is a well-defined Lie algebroid over the fiber $E_x:=p^{-1}(x)$,
 where $p:E\to B$ is the map underlying $\pi$ (there, we shall require $p$ to be a submersion in order to ensure
 that the fibers are smooth manifolds).

Thus $\K$ is  a family of Lie algebroids on the fibers of $p$. Locally, this means that one can choose coordinate charts $(x,y)=(x_1,\dots,x_p,y_1,\dots,y_q)$ on $E$, and a basis $\kappa_1,\dots,\kappa_r$ of sections of $\K$ such that:
\begin{equation}\label{eq:kernel:local:structure}
[\kappa_i,\kappa_j]=\sum_{l=1}^r c_{i,j}^{\, l} \kappa_l,\quad\quad
\sharp_\K(\kappa_i)=\sum_{j=1}^r s_{i,j} \partial y_j,
\end{equation}
where $c_{i,j}^{\, l}$ and $s_{i,j}$ are functions of $(x,y)$, and $\sharp_\K$ is the anchor of $\K$. Here, $(x_1,\dots,x_p)$ denote coordinates on $B$ while $(y_1,\dots,y_q)$ parametrize the fibers.

Now a first attempt to define a notion of fibration would be to require that for any Lie sub-algebroid $A_U\to A_B$,
 the fibre product $A_U \times_{A_B}A_E$ identifies with a direct product $A_U \times A_F$, for some Lie
subalgebroid $A_F\subset A_E$. In \cite{kotov-strobl} for instance, fibrations are required to be locally trivial. 
Yet, this requirement seems too restrictive: even in the case of Lie algebras, given a short exact sequence
\begin{equation} \label{eq:extension-lie-alg}
\mathfrak{k}\hookrightarrow \mathfrak{g}\twoheadrightarrow \mathfrak{h},
\end{equation}
in general $\mathfrak{g}$ will not split (as a Lie algebra) into a direct sum $\mathfrak{g}=\mathfrak{k}\oplus \mathfrak{h}$.
 Still, one always obtains by integration an exact sequence of groups $K\hookrightarrow G\twoheadrightarrow H$ so that $G$
 is a locally trivial fibration over $H$ with fiber-type $K$, regardless of $\g$ splitting into $\mathfrak{k}\oplus \mathfrak{h}$
 or not.

Another strong restriction coming from the above requirement, is that it would force all the fibers $\K|_{E_x}$ to be isomorphic; indeed,
 it would imply that one could choose the coordinates in \eqref{eq:kernel:local:structure} in such a way that $c_{i,j}^{\, l}$
 are independent of $x$. In general, we know \cite{Br} that given a connection, parallel transport allows only to compare two fibers
 that lie over a same $A_B$-orbit. In fact, even the diffeomorphism class of the fiber $E_x$ may vary from one point to another.

Now recall that in the case of manifolds, the notion of (locally trivial) fibration can be defined by requiring the existence
 of a complete Ehresmann connection.

An {\em Ehresmann connection} of $M\xrightarrow{\pi} N$
determines a lift of any vector field $X$ on $N$ to a vector field $\sigma(X)$ on $M$ satisfying $\pi_* (\sigma(X))=X$;
 it is called a horizontal lift.

An Ehresmann connection is said to be {\em complete} if  $\sigma(X)$ is a complete vector field whenever $X$ is.
 If $M\to N$ admits a complete Ehresmann connection, then one can always decompose $M$ locally as
 $M \stackrel{loc}{\sim} \mathcal{U} \times F$, for a suitable open set $\mathcal{U}$ of $N$.
 This may be achieved by using the \emph{parallel transport} induced by the connection; for this, completeness is essential.

In terms of Lie algebroids, the situation may be described as follows: the differential $\d p:TM\to TN$ is a surjective
 Lie algebroid morphism, whose kernel $\K=\ker \d p$ is given by the tangent spaces to the fibers. In case there exists a
 complete Ehresmann connection, note that local triviality occurs at the level of Lie algebroids since
 $TM|_{\U\times F}=T\mathcal{U}\times TF$ by standard arguments.

 This discussion motivates the following definition:

\begin{defi} A Lie algebroid fibration is a surjective Lie algebroid morphism $A_E \xrightarrow{\pi} A_B$ that covers a submersion
 $E\xrightarrow{p} B$ and admits a complete Ehresmann connection.
\end{defi}

Recall that, given $\pi:A_E \twoheadrightarrow A_B$, an {\em Ehresmann connection of Lie algebroids} is the choice a smooth sub-vector
 bundle $H$ complementary to $\K$ in $A_E$, so $A_E$ splits as a vector bundle as $A_E=\K\oplus H$. Note that such a connection
 always exists. Moreover, it determines a horizontal lift $\sigma:\Gamma(A_B)\to \Gamma(H)$ which is a $C^\infty(A_B)$-linear map,
 $\sigma(\alpha)$ is defined as the unique section of $H$ that $\pi$-projects onto $\alpha$.

An Ehresmann connection is said to be \emph{complete} if $\sharp_E(\sigma(\alpha))$ is a complete vector field for any section
 $\alpha$ such that $\sharp_B(\alpha)$ is complete.

Recall also from \cite[Section 2]{Br} that once a connection has been chosen, we get the analog of a covariant derivative; it is a morphism
 $\Dd: \Gamma(A_B)\to \text{Der}(\K)$ with values in the derivations of $\K$, obtained by letting
 $\Dd_\alpha (\kappa):=[\sigma(\alpha),\kappa]_{A_E}$. Moreover, there is a curvature
 $2$-form $\omega\in \Omega^2(A_B)\otimes \Gamma(\K)$ defined by
 $\omega(\alpha,\beta):=\sigma([\alpha,\beta]_{A_B})-[\sigma(\alpha),\sigma(\beta)]_{A_E}.$
 As a consequence of the axioms of a Lie algebroid structure, the couple $(\Dd,\omega)$ is subject to the following conditions
 \cite[Section 2.1]{Br}:
\begin{eqnarray*}
&\text{Curv}_{\Dd}(\xi_1,\xi_2)(\kappa)=\left[\omega(\xi_1,\xi_2),\kappa\right]_\K\\
&\oint_{\xi_1,\xi_2,\xi_2} \Dd_{\xi_1}\omega(\xi_2,\xi_3)-\omega(\left[\xi_1,\xi_2\right],\xi_3)=0,
\end{eqnarray*}
for any $\xi_i\in\Gamma(A_B),\kappa\in\Gamma(\K)$. Here, $\text{Curv}_{\Dd}(\xi_1,\xi_2)=[{\Dd}(\xi_1),\Dd(\xi_2)]-\Dd_{[\xi_1,\xi_2]}$,
 where we use the bracket of derivations. Therefore we see that, formally at least, the situation is similar to what happens for
 Lie algebras. In fact, the space of $\pi$-projectable sections of $A_E$ is an extension of $\Gamma(A_B)$ by $\Gamma(\K)$
 (see \cite[Lemma 1.9]{Br}). This will play an important role in this paper as we will deeply rely on the use of connections.

\begin{remark}
Note first of all that $E \xrightarrow{p} B$ may not be a locally trivial fibration. Still, it is so orbit-wise, that is, for a small
 open neighborhood $U\subset \mathcal{O}_B$ in an $A_B$-orbit, we have $p^{-1}(\mathcal{U})=F\times \mathcal{U}$ and
 $\K=\K|_F\times \mathcal{U}$ (this is a consequence of Prop. \ref{thm:loc:triv}).
Hence, when $A_B$ is a transitive Lie algebroid, the base map is a fibration and $\K$ is locally trivial.

However, in general $A_E$ will {\em not} split over $\mathcal{U}$ as $\K|_F \times A_B|_\mathcal{U}$, even if $A_B$ is transitive.
 As we already pointed out \eqref{eq:extension-lie-alg}, the simplest counter examples are Lie algebra extensions.
\end{remark}

\begin{remark}
Another way to argue about completeness in our definition of a fibration is by looking at the groupoid level: thinking
 of submersions as the weakest notion of fibration, one may naively try to internalize Lie groupoids into submersions.

Consider the category \underline{Sub} of submersions, where objects are (surjective) submersions $E\xrightarrow{p} B$
 between manifolds, and morphisms are commutative diagrams:
$$\xymatrix{E\ar[r]^{\phi}\ar[d]^{p'}&E'\ar[d]^{p'}\\
            B\ar[r]^{\phi_0} &B}.$$
Then for an  internal groupoid in \underline{Sub}, both the spaces of objects and arrows come as objects in  \underline{Sub},
 so we have: $\G\xrightarrow{\pi}\G_B$, and $E\xrightarrow{p} B$. Moreover, since the groupoid structure maps are morphisms in
 \underline{Sub}, we deduce Lie groupoid structures $\G\tto E $ and $ \G_B\tto B$, for which $\pi$ defines a
 \emph{submersive} groupoid morphism. In other words, by internalizing Lie groupoids into submersions,
 we obtain a notion (seemingly) suitable for Lie groupoid extensions. In fact this construction has two problems.

 Firstly the kernel of $\pi$ may not be a \emph{Lie} subgroupoid. As discussed in
 \cite[Section 2.4]{MK2} (see also \cite{Pradines} for a more general point of view), one shall require that $\pi^!:\G_E\to p^!\G_B$ is also a surjective submersion  (where, $p^!\G_B$
 denotes the usual pull-back groupoid of $\G_B$ along the submersion $p$, and $\pi^!$ the pulled-back morphism of $\pi$).
 Essentially, this condition implies that given any arrow $g\in\G_B$ and any point $e\in E$ lying over the source of $g$,
 there exists $\tilde{g}\in \G_E$ such that $\pi(\tilde{g})=g$, so this corresponds to a \emph{lifting} property. 

Secondly, when looking at the infinitesimal level, we run into similar difficulties: given a Lie algebroid extension
 $\pi:A_E\to A_B$, with $A_E$ and $A_B$ integrable Lie algebroids, the morphism $\pi$ may not even integrate into
 a surjective groupoid morphism if there does not exist a complete connection \cite[Ex 4.11-4.12]{Br}.

By assuming the existence of a complete connection, it is easily seen that both issues are avoided.
\end{remark}

We define the homotopy ``groups'' $\pi_n$  of a Lie algebroid $A$ (see Section \ref{sec:homotopy-groups}) by taking  morphisms
 from spheres $TS^n$ to $A$, then modding  out by homotopies. It turns out that $\pi_{n\ge 2}$ are  bundles of discrete abelian groups over the base manifold $M$, and $\pi_1(A)$ is the topological universal groupoid of $A$
(called Weinstein groupoid in \cite{cf}) of Crainic and Fernandes. We also show the expected long exact sequence for a fibration:

\begin{thm}[Main Theorem] \label{thm:long-exact-seq}
Consider a fibration of Lie algebroids:
$$\xymatrix{A_E\ar[r]^{\pi}\ar[d] &A_B\ar[d]\\
            E\ar[r]^{p} &B,}$$
with fibre $\K\xrightarrow{i} A_E$. Then there is a long exact sequence of groupoids:
\begin{multline}
\label{long:sequence1} \cdots \to \pi_n(\K) \xrightarrow{i_n} \pi_n(A_E) \xrightarrow{\pi_n}p^*\pi_n(A_B) \xrightarrow{ \partial_n} \pi_{n-1} (\K)\to  \cdots \\
\cdots\to p^*\pi_2(A_B)  \xrightarrow{ \partial_2} \pi_{1} (\K) \xrightarrow{i_1} \pi_1(A_E) \xrightarrow{\pi_1} \pi_1(A_B), \end{multline}
where $i_n$ and $\pi_n$ are maps induced by $i$ and $\pi$ respectively, and $\partial_n$ is the transgression map we construct
 in Section \ref{sec:const-trans}.
\end{thm}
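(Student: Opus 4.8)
The plan is to follow the classical topological proof of the long exact sequence of a fibration, with the homotopy lifting property replaced by the \emph{parallel transport} furnished by the complete Ehresmann connection. The engine of the whole argument will be a lifting lemma of the following shape: given a Lie algebroid morphism $TX \to A_B$ from the tangent algebroid of a cube (or disk) $X$, together with a lift of its restriction to one face to a morphism into $A_E$, there exists a morphism $TX \to A_E$ lifting the given one and extending the prescribed lift on that face. A morphism $TX\to A_B$ is the same datum as a flat $A_B$-valued form on $X$; applying the horizontal lift $\sigma$ produces an $A_E$-valued form whose failure to be flat is governed by the curvature $\omega$, and integrating this failure along $X$ amounts to parallel transport. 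Completeness of the connection is exactly what guarantees that the relevant flows exist for all parameter values, so that the lift is globally defined; this is where the completeness hypothesis in the Definition of a fibration enters in an essential way.

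Granting this lifting lemma, I would first check that the three maps are well defined. The maps $i_n$ and $\pi_n$ are induced by post-composition with the algebroid morphisms $i\colon \K\to A_E$ and $\pi\colon A_E\to A_B$; since post-composition sends homotopic spheres to homotopic spheres, they descend to the homotopy groupoids, and the factor $p^*$ appears because each $\pi_n$ is a bundle over its base manifold, so one must pull $\pi_n(A_B)$ back along $p$ in order to compare it with $\pi_n(A_E)$ over $E$. The connecting map $\partial_n$ is the transgression built in Section \ref{sec:const-trans}: given a sphere $TS^n\to A_B$, lift a hemisphere to $A_E$ using the lemma; the restriction to the equatorial $(n-1)$-sphere then lands in $\K=\ker\pi$, and its class in $\pi_{n-1}(\K)$ is declared to be $\partial_n$ of the original class. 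Independence of all the choices (the lift, the hemisphere, and the representative) follows from the same lifting lemma applied one dimension higher.

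The heart of the proof is exactness at the three types of positions, each of which has an easy ``composite is trivial'' direction and a ``converse'' direction that invokes the lifting lemma. At $\pi_n(A_E)$ one has $\pi_n\circ i_n=0$ since $\pi\circ i=0$; conversely, if $f\colon TS^n\to A_E$ has $\pi\circ f$ null-homotopic, lifting that null-homotopy with initial face $f$ deforms $f$ to a sphere with image in $\K$, placing its class in $\im(i_n)$. At $p^*\pi_n(A_B)$ one has $\partial_n\circ\pi_n=0$ because a class coming from $A_E$ admits a global closed lift, so its transgressed equatorial sphere bounds and is trivial; conversely, if $\partial_n$ of a class vanishes, the equatorial $\K$-sphere is null-homotopic within $\K$, and capping the two hemisphere-lifts with this null-homotopy closes them into a genuine sphere $TS^n\to A_E$ projecting to the given class. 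At $\pi_{n-1}(\K)$ one has $i_{n-1}\circ\partial_n=0$ since by construction the transgressed sphere bounds the lifted hemisphere inside $A_E$; conversely, if a $\K$-sphere becomes null in $A_E$, projecting its null-homotopy by $\pi$ produces a sphere in $A_B$ whose transgression recovers the original class.

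The main obstacle I anticipate is not the formal homological bookkeeping, which parallels the topological case closely, but establishing the lifting lemma rigorously at the level of Lie algebroid morphisms. Unlike continuous maps, such morphisms must respect both anchor and bracket, so a lift assembled from the horizontal distribution is generally \emph{not} flat and must be corrected by integrating the curvature $\omega$ through the derivation $\Dd$; controlling this correction over an entire cube, and guaranteeing that it can be carried out for all parameter values, is precisely where completeness is used and where the genuine technical work resides. A secondary subtlety is that these are homotopy \emph{groupoids} over $M$ rather than groups, so every construction must be performed fiberwise over base points and shown to vary smoothly; at the low end of the sequence, where $\pi_1$ is the Weinstein groupoid, exactness must be read in the appropriate pointed/groupoid sense rather than as an equality of subgroups.
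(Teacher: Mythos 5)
Your plan follows the paper's proof essentially verbatim: the paper's Theorem \ref{thm:lift} is precisely your lifting lemma (proved by extending the components of a morphism $TI^n\to A_B$ into commuting time-dependent sections, lifting one of them by $\sigma$, and solving the resulting ODEs, with completeness guaranteeing global solutions), the transgression map is the restriction of such a lift to the face $\{t_n=1\}$, and exactness is run through the decomposition $S\simeq C_\K\odot C_H$ of Proposition \ref{lem:sE:decomposition}. The one point your sketch glosses over is that the lift of a sphere is not itself a sphere (it acquires one nontrivial face), which is why the paper phrases every exactness argument via concatenation along that face rather than the naive ``deform $f$ into the fiber'' argument.
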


In fact, there is a simplicial object $S_\bullet (A)$ associated to any Lie algebroid $A$, with $S_n(A)=\hom (T\Delta^n, A)$,
 the set of Lie algebroid morphisms from the tangent bundle $T\Delta^n$ of the standard $n$-simplex to $A$.
 Then the $n$-th homotopy group $\pi_n(A)$ is just the simplicial homotopy group $\pi_n(S_\bullet(A))$.

 Furthermore,  Theorem \ref{thm:lift} proves that a  Lie algebroid fibration corresponds
 to a \emph{Kan} fibration of the associated simplicial objects. Homotopy theoretic considerations may seem to imply
 that our main theorem  Theorem \ref{thm:long-exact-seq} then follows, however it appears as difficult (if not more) to establish completely a good
 correspondence\footnote{In particular, the smoothness of simplicial objects needs
 a good consideration.}. Hence we leave this discussion to a future work and keep the current work self-contained to  make it
 more accessible to differential geometers and mathematical physicists. 

\noindent {\bf Acknowledgement}: We give warmest thanks to Rui Loja Fernandes for
very helpful discussions and for the invitation for the second author to IST
during which much progress on this work was attained. The first author wishes to
thank the Courant Research Centre ``Higher Order Structures'' for the
invitation during which the work was initiated.

\section{Locally trivial fibrations}

As we explained above, a fibration may not be locally trivial in a naive way.
 There is however a class of fibrations which turns out to be naturally locally trivial: fibrations over a tangent bundle.

This is well known in the case of the Atiyah exact sequences: any transitive Lie algebroid $A\to B$ fits into as an exact sequence:
$$\g_B \hookrightarrow  A\twoheadrightarrow TB, $$
where $\g_B$ is a \emph{locally trivial} bundle of Lie algebras.

Here the base manifolds coincide ($E=B$) but there is a well-known similar result in which the base manifolds differ,
 namely the neighborhood of an orbit in a Lie algebroid. This was first observed in the case of Poisson manifolds by
 Weinstein as the \emph{local splitting Theorem} \cite{We1}, but applies more generally
 to any Lie algebroid (see \cite[Theorem 1.1]{Fe} whose proof was completed by \cite{Dufour1}). This result is easily
 interpreted as a local isomorphism $A\stackrel{_{loc.}}{\sim} T\mathcal{U}\times A_N$, where $\mathcal{U}\subset L$ is an
 open subset contained in an orbit, and $A_N$ the transverse structure at a point $x\in \mathcal{U}$ for an arbitrarily chosen
 (small) transversal $N$.

With both results in mind, it was natural to ask whether any (abstract) fibration over a tangent bundle was necessarily locally
 trivial. The answer is positive:

\begin{thm}\label{thm:loc:triv}
 Let $\pi:A_E\to TB$ be a fibration with base a tangent Lie algebroid. As usual, we denote $\K:=\ker \pi$.
 Then any point $b$ in $B$ admits an open neighborhood $\U$ and an isomorphism of Lie algebroids $\pi^{-1}(T\U)\simeq \K|_{E_{b}}\times T\U$.
\end{thm}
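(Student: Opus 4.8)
The plan is to reduce the statement to a local model by choosing coordinates on the base, and then to trivialize one coordinate direction at a time by flowing along horizontal lifts, using completeness to guarantee that the relevant flows are defined over the whole chart. The point where $A_B=TB$ enters crucially is that the coordinate vector fields $\partial_{x_i}$ are honest sections of $A_B$ (the anchor $\sharp_B$ is the identity), so on a small enough box they are complete and the completeness hypothesis on the connection makes their horizontal lifts $\sharp_E(\sigma(\partial_{x_i}))$ complete as well. This is exactly the feature that fails for a general (even transitive) base, and it is why a tangent base yields genuine local triviality rather than merely the orbit-wise triviality of the Remark.

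First I would fix a chart $\U$ around $b$ with coordinates $(x_1,\dots,x_n)$, $b\leftrightarrow 0$, shrunk to a box so that each $\partial_{x_i}$ is complete, and observe that $\pi^{-1}(T\U)=A_E|_{p^{-1}(\U)}$ (since $\pi$ covers $p$). Set $\alpha_i:=\sigma(\partial_{x_i})\in\Gamma(A_E)$ and $X_i:=\sharp_E(\alpha_i)$. Because $\alpha_i$ is $\pi$-projectable onto $\partial_{x_i}$ and $X_i$ is complete, $\alpha_i$ generates a one-parameter group $\Psi_i^t\in\Aut(A_E)$ of Lie algebroid automorphisms covering the flow $\phi_i^t$ of $X_i$; as $\pi\circ\Psi_i^t$ equals the translation flow of $\partial_{x_i}$ composed with $\pi$, each $\Psi_i^t$ preserves $\K=\ker\pi$.

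The engine of the proof is a one-direction product lemma, which I would establish first: if $\alpha\in\Gamma(A_E)$ has complete anchor and projects to a coordinate field $\partial_t$, then flowing by $\Psi^t$ yields an isomorphism $A_E|_{p^{-1}(\text{tube})}\simeq \hat A\times T\R_t$, where $\hat A$ is the restriction of $A_E$ to the slice $\{t=0\}$ and $\alpha$ is carried to $\partial_t$. Concretely the diffeomorphism is $(e',t)\mapsto\phi^t(e')$ for $e'$ in the slice, and a section $\kappa$ of $\hat A$ is carried to the $\Psi^t$-invariant section $\tilde\kappa$ with $\tilde\kappa|_{\phi^t(e')}:=\Psi^t(\kappa|_{e'})$; since $\Psi^t$ is an automorphism the $\tilde\kappa$ bracket among themselves inside the slice, while $[\alpha,\tilde\kappa]=0$ by invariance, so the bracket with $\partial_t$ vanishes and the decomposition is a genuine product of Lie algebroids. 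Iterating this $n$ times, peeling off $x_1,\dots,x_n$ in turn, gives $A_E|_{p^{-1}(\U)}\simeq (A_E|_{E_b})\times T\U$; at each stage the leftover is again a fibration with base the lower-dimensional slice $\{x_1=\dots=x_k=0\}$, same kernel $\K$, and a complete connection inherited by restriction (the surviving fields $\partial_{x_{k+1}},\dots$ are tangent to the slice, so their lifts' flows preserve it). The $n$ peeled $T\R$-factors absorb all of the horizontal bundle $H$, and a rank count $\rank A_E=\rank\K+n$ identifies the residual bundle over $E_b$ with $\K|_{E_b}$, yielding $\K|_{E_b}\times T\U$.

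I expect the main obstacle to be the one-direction lemma, specifically disentangling the genuinely vertical part of the anchor $X_i=\sharp_E(\alpha_i)$ from the base translation and controlling the curvature $\omega$. The curvature is exactly what obstructs a naive simultaneous-flow trivialization—the iterated flows do not commute, so $d\Phi(\partial_{x_i})\ne X_i$ for $i>1$—and the inductive peeling is designed to sidestep this: at each step only a single flow is used, and the automorphism property of $\Psi^t$ together with the invariance $[\alpha,\tilde\kappa]=0$ (whence, via the structure equations for $(\Dd,\omega)$, closure of the transported bracket) do the bookkeeping that would otherwise require integrating $\omega$ away. The remaining care is to check that completeness is genuinely inherited at each stage, so that the flows—and hence the trivialization—extend across the whole box $\U$; this uses that the surviving coordinate fields are tangent to the slice and that their horizontal lifts have complete anchors by hypothesis.
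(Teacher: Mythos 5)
Your route is genuinely different from the paper's and, in outline, viable. The paper works over all of $p^{-1}(\U)$ at once: by induction on $d$ it modifies the connection (using the identity $\partial_H\omega=0$ and the Poincar\'e lemma to kill the components $\omega(\partial_{x_i},\partial_{x_{d+1}})$ for $i\le d$) so that all the horizontal lifts $\sigma(\partial_{x_i})$ eventually commute, and then solves ODEs to produce a frame of $\K$ (or, in the frame-free variant, a vector-bundle trivialization of $\K$) parallel in all directions simultaneously; the isomorphism is read off from this commuting frame. You never make the different horizontal flows commute: you use one flow at a time and restrict to smaller and smaller slices, so the curvature never has to be normalized away. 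What this buys is that the Poincar\'e-lemma step and the change of frame of $\K$ disappear entirely, and like the paper's second argument you avoid assuming $\K$ has a global frame over $p^{-1}(\U)$. What it costs is that the isomorphism is a composition of $n$ flow maps rather than being exhibited by a single flat connection, and you must verify at each stage that the leftover is again a fibration over the slice with an appropriately complete connection --- which you do address.

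There is one point you must repair for your one-direction lemma to be true as stated: $\hat A$ cannot be ``the restriction of $A_E$ to the slice.'' That restriction has rank $\rank\K+n$, so $\hat A\times T\R$ has the wrong rank, and your map $(\kappa,c\,\partial_t)\mapsto\tilde\kappa+c\,\alpha$ fails to be fiberwise injective ($\alpha|_{\text{slice}}$ and $\partial_t$ have proportional images). The correct object is the corank-one subbundle $\hat A:=\bigl(\K\oplus\sigma(T(\text{slice}))\bigr)\big|_{p^{-1}(\text{slice})}$; one must then check this is a Lie subalgebroid, which holds because its sections have anchors tangent to $p^{-1}(\text{slice})$ and because the curvature takes values in $\Gamma(\K)$, so $[\sigma(X),\sigma(Y)]=\sigma([X,Y])-\omega(X,Y)$ stays inside. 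With that definition the final ``rank count'' becomes unnecessary (and, as written, it is insufficient --- equality of ranks identifies nothing): after the last peeling the slice is the single point $b$, so the residual algebroid is $\K|_{E_b}$ on the nose. Two smaller repairs: $\partial_{x_i}$ on a bounded box is never complete, so to invoke the hypothesis you should cut off $\partial_{x_i}$ to a compactly supported, hence complete, field on $B$ and use that the lifted flow agrees with that of $\sharp_E\sigma(\partial_{x_i})$ as long as the base point stays where the cutoff equals $1$; and the identity $[\tilde\kappa,\tilde\kappa']|_{\text{slice}}=[\kappa,\kappa']_{\hat A}$ needs the observation that anchors of sections of $\hat A$ are tangent to $p^{-1}(\text{slice})$, so the ambient bracket restricted there depends only on the restrictions.
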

\begin{proof} Here, we will adapt the proof of \cite[Theorem 8.5.1]{DuZu}, the main difficulty being that in our situation,
 $p^{-1}(\mathcal{U})$ is not a neighborhood of $\mathcal{U}$. Let $r$ denote the rank of $\K$ and $p$ the dimension of $B$.
 Then we consider the following property, defined for any $d=0,\dots,p$:
\begin{enumerate}[$(\mathcal{P}_d):$]
  \item \emph{There exists an open neighborhood $\U$ around $b$, coordinates
    $(x_1\dots x_d,\dots,x_p,y_1,\dots,y_q)$ on $p^{-1}(\U)$, a local basis of sections $\kappa_1,\dots,\kappa_r$ of $\K$,
    and a Ehresmann connection $\sigma:\Gamma(TB)\to \Gamma(A_E)$ such that the following relations hold:}
 \begin{eqnarray}
  \left[\sigma_i,\sigma_j\right]&=&0,\hspace{10pt}\hspace{70pt}(i,j=1,\dots,d)         \label{loctriv1}\\
  \left[\sigma_i,\kappa_l\right]&=&0,\hspace{10pt}       \hspace{70pt}  (i=1,\dots,d;l=1,\dots,r)    \label{loctriv2}\\ 
  \sharp_E(\sigma_i)&=&\partial x_i,\hspace{10pt}        \hspace{60pt}(i=1,\dots,d)                         \label{loctriv3}\\
  \left[\kappa_l,\kappa_m\right]&=&{\Sigma}_{s=1}^{r}\, c_{l,m}^{\, s}   \kappa_s,\hspace{10pt}\hspace{22pt}(l,m=1,\dots,r)   \label{loctriv4}\\
  \sharp(\kappa_s)&=&\Sigma_{v=1}^q K_{s,v} \partial y_v,\hspace{10pt} \hspace{16pt}(s=1,\dots,r)\label{loctriv5}
  \end{eqnarray}
\emph{Here, we denoted $\sigma_i:=\sigma(\partial x_i)$ and $c_{l,m}^{\,s}, K_{s,v}$ are smooth functions required to depend only on the variables $(x_{d+1},\dots,x_p,y_1,\dots,y_q)$.}
\end{enumerate}
Note that when $d=p$, we get that $\sigma(\partial x_i)$ $(i=1,\dots,p)$ mutually commute, that they commute with
 $\kappa_l$ $(l=1,\dots,r)$ and that the brackets $[\kappa_l,\kappa_m]$ and the anchor of $\K$ are independent of $x$;
 thus the map $\sigma(\partial x_i)\to \partial x_i, \kappa_s\to \kappa_s$ provides the desired isomorphism.

In order to prove $(\mathcal{P}_p)$, we proceed by induction on $d$.  When $d=0$, there is nothing to show so may assume that $(\mathcal{P}_d)$
 is satisfied for $d\ge 0$. Then we can denote  $\omega=\sum_{i<j} \omega_{i,j}^{\, l} \kappa_l\otimes \d x_i\wedge \d x_j$ the curvature $2$-form, where $\omega_{i,j}^{\, l}$ are smooth functions of $(x,y)$. Note that by induction, $\omega_{i,j}^{\, l}$ already vanishes when both $i,j\leq d$.

We first have to focus on $\omega^l_{i,d+1}$, which we will arrange to vanish for all $i=1,\dots,d$ by modifying the connection. On the one hand we have, by definition of $\omega$:
$$[\sigma_i,\sigma_{d+1}]=\sum_{l=1,\dots,r} \omega_{i,d+1}^{\, l} \kappa_l, \quad\quad(i=1,\dots,d).$$
On the other hand, a computation shows that the equation $\partial_H\omega(\partial x_i,\partial x_j,\partial x_{d+1})=0$  takes the form:
$$\frac{\partial \omega^{\, l}_{j,d+1}}{\partial x_i}-\frac{\partial \omega^{\, l}_{i,d+1}}{\partial x_j}=0, \quad\quad (i,j\leq d).$$
By applying the Poincar\'e Lemma for any $l=1,\dots,r$, one gets functions $\psi^l$ satisfying:
$$-\frac{\partial \psi^l}{\partial x_i}=\omega_{i,d+1}^{\, l},       \quad\quad(i=1,\dots,d).$$
If we let $\Delta:=(\sum_{l=1}^r \psi^l \kappa_l)\otimes \d x_{d+1}\in \Omega^1(B)\otimes\Gamma(K)$ and consider the Ehresmann
 connection with horizontal lift given by $\sigma'=\sigma+\Delta$, we know from \cite{Br} that the corresponding curvature will be
 $\omega'=\omega+\partial_H\Delta + [\Delta\wedge\Delta]$. In particular, we can check that
$$\omega'(\partial x_i,\partial x_{d+1})=\sum_{l=1}^r  \omega_{i,d+1}^l \kappa_l+ \sum_{l=1}^r\frac{\partial \psi^l}{\partial x_i}\kappa_l=0.$$
Thus we have proved \eqref{loctriv1} in $(\mathcal{P}_{d+1})$.
 Now by applying the anchor map, we immediately get that $[\sharp_E(\sigma'(\partial x_i)),\sharp_E(\sigma'(\partial x_{d+1}))]=0$,
 so we can choose coordinates for which $\sharp_E \sigma'(\partial x_{d+1})=\partial x_{d+1}$.

We now want to replace $\kappa_1,\dots,\kappa_r$ by $\tilde{\kappa}_l=\sum g_{l,m}\kappa_m$,
 where $g_{l,m}$ are functions on $p^{-1}(\U)$, in such a way that $\eqref{loctriv2}$ is satisfied in $(\mathcal{P}_{d+1})$.
 For this we need that:
\begin{enumerate}[i)]
 \item $[\sigma'(\partial x_{d+1}),\tilde{\kappa}_m]=0$ is satisfied;
 \item $g_{l,m}$ is independent of $x_i$ for all $i=1,\dots, d$.
\end{enumerate}
 Condition i) is equivalent to the following system of equations:
\begin{equation}
\label{eqstep2}\frac{\partial g_{l,m}}{\partial x_{d+1}}+\sum_{u=1,\dots,r} g_{l,u}f_{d+1,u}^m=0, \quad\quad(l,m=1,\dots,r)
\end{equation}
where the functions $f_{d+1,u}^m$ are given by:
$$[\sigma'(\partial x_{d+1}),\kappa_u]=\sum_{m=1,\dots,r} f_{d+1,u}^m \kappa_m.$$
We know that the equations \eqref{eqstep2} have a unique solution $(g_{l,m})$ with initial condition
 $$g_{l,m}|_{\{x_{d+1}=0\}}=\delta_{l,m}.$$
 Moreover it is easily seen that this solution is independent of $x_i$ 
 provided the initial condition and $f_{d+1,u}^m$ are, thus ensuring ii). 
 To see that  $f_{d+1,u}^m$ is indeed independent of $x_i$ (for $i=1 ,\dots, d$) one may write the Jacobi identity involving
 $\sigma'(\partial x_{d+1}),\sigma'(\partial x_i),\tilde{\kappa}_u$ and use the fact that 
$[\sigma'(\partial x_{d+1}),\sigma'(\partial x_i)]=0$ and $[\sigma'(\partial x_{d+1}),\tilde{\kappa}_u]=0$.

In order to conclude, we have to check that \eqref{loctriv4} in $(\mathcal{P}_{d+1})$ is satisfied as well.
 This is just a consequence of the derivation property: $$[\sigma'(\partial x_{d+1}),[\tilde{\kappa}_l,\tilde{\kappa}_m]]
=[[\sigma'(\partial x_{d+1}),\tilde{\kappa}_l],\tilde{\kappa}_m]+[\tilde{\kappa}_l,[\sigma'(\partial x_{d+1}),\tilde{\kappa}_m]],$$
written in local coordinates. The last equation \eqref{loctriv5} is obvious and the independence condition
 follows by applying the anchor to $[\sigma'(\partial x_i),\kappa_s]=0$.

Now the above proof is not totally satisfactory since we had to choose a \emph{local} basis of sections of $\K$,
 \emph{i.e.} the $\kappa_l$'s are only defined on a small neighborhood $\mathcal{V}\subset p^{-1}(\mathcal{U})$.
 In general, even the restriction $\K|_{E_b}$ may not admit global sections, so we need a proof that make no reference to the
 $\kappa_l's$. In order to achieve this, one may proceed as follows:
\begin{itemize}
\item first we observe that $\K|_{p^-1(\mathcal{U})}$ identifies with $\K|_{E_b}\times \mathcal{U}$ as a vector bundle
 (this follows from the completeness assumption). Moreover, one can still express the curvature form as
 $\omega=\sum_{i\leq j}\omega_{i,j}\d x_i\wedge \d x_j$, with now $\omega_{i,j}\in\Gamma(\K)$,

\item then we apply a similar induction process as above, replacing the condition \eqref{loctriv2} by 
$[\sigma_i,\kappa]=\frac{\partial \kappa}{\partial x_i}$ for any
 $\kappa\in\Gamma(\K|_{p^{-1}(\mathcal{U})})\simeq \Gamma(\K|_{E_b}\times \mathcal{U})$.
 It is easily checked using Leibniz rule that this condition implies $\sharp_E(\sigma_i)=\partial x_i$.
 Moreover it makes it easy to apply Poincar\'e lemma in the first step of the induction,

\item in the second step, we have to prove that $[\sigma'_{d+1},\kappa]=\frac{\partial \kappa}{\partial x_{d+1}}$ for a good
 identification of $\K|_{p^{-1}(\mathcal{U})}$. This becomes obvious once observed that the bracket with
 $\sigma_1,\dots,\sigma_d,\sigma'_{d+1}$ induces \emph{commuting} linear vector fields on $\K|_{p^{-1}(\mathcal{U})}$,

\item to conclude, we notice in equations \eqref{loctriv4} \eqref{loctriv5} that the independence of the structure
 functions on the variables $x_1,\dots,x_{d+1}$ can be replaced by the following conditions:
 $\frac{\partial}{\partial x_i}[\kappa,\kappa']=0, \frac{\partial}{\partial x_i}\sharp_\K(\kappa)=0$ whenever
 $\kappa,\kappa'\in\Gamma(\K)$ are independent of $x_1,\dots,x_{d+1}$. As above, we just have to apply Jacobi identity,
 and use the anchor map.
\end{itemize}
\end{proof}

\section{Homotopy groups} \label{sec:homotopy-groups}
In this section, we want to define a nice notion of homotopy groups for Lie algebroids. For the sake of computations,
 it is more convenient to define usual spheres as morphisms $\gamma:I^n\to M$ whose restriction to the boundary is reduced to a
 point $m_0$; the reason for this is that $I^n$ admits simple parametrizations. If we differentiate $\gamma$, we see that this condition is
 equivalent to require that $\d\gamma:TI^n\to TM$ vanishes on $T\partial I^n$.

One can treat similarly the notion of homotopy: a family of spheres $\gamma^{t_{n+1}}: I^n\to M$, $(t_{n+1}\in I)$ is a homotopy
 if and only if the base points $m_0(t_{n+1}):=\gamma^{t_{n+1}}(\partial I^n )$ do not depend on $t_{n+1}$. By differentiation,
 this condition is equivalent to requiring that ${\frac{\partial \gamma}{\partial t}}_{\!_{n+1}}$ vanishes whenever one of
 $\{t_1,\dots,t_n\}$ equals $0$ or $1$. With this in mind, one may define homotopy groups of a Lie algebroid as follows:
\begin{defi}\label{def:sphere}
A $n$-dimensional $A$-sphere is a morphism $a=\sum a_k \d t_k:TI^n\to A$ such that $a_k(t_1,\dots, t_n)=0$ if one of
 $\{t_1,\dots,\hat{t}_k,\dots,t_n\}$ is $0$ or $1$. We denote the set of $n$-dimensional $A$-spheres by $\cS_n(A)$. 
\end{defi}

\begin{defi}\label{def:homotopy}
Two $n$-dimensional spheres $a_0, a_1$ are homotopic if there exists a Lie algebroid morphism $h=\sum h_k \d t_k:TI^{n+1}\to A$ 
satisfying the following properties:
\begin{Itemize}
\item $a_\epsilon =\sum_{k=1}^n h_k(t_1,\dots,t_n,\epsilon)\d t_k$ for $\epsilon=0,1$;
\item $h_{n+1}$ vanishes whenever one of $\{t_1,\dots,t_n \}$ is $0$ or $1$.
\end{Itemize}\end{defi}

This notion of homotopy defines an equivalence relation on $\mathcal{S}_n(A)$ since, as we will explain in Section
 \ref{sec:concatenation}, homotopies can be concatenated.

\begin{defi}
For any Lie algebroid $A$ and any integer $n\geq1$, the $n$-th homotopy group of a Lie algebroid, denoted $\pi_n(A)$, is defined as the space of equivalence
 classes of $n$-spheres up to homotopy.
\end{defi}

In this definition, the word ``group'' is slightly misleading. In fact, when  $n=1$, Definition
\ref{def:sphere}, \ref{def:homotopy} simply yield $A$-paths,
and homotopies of $A$ paths as in \cite{cf}. Therefore, $\pi_1(A)$ coincides with the source simply connected topological
 groupoid $\G(A)$ (possibly Lie). In particular $\pi_1(A)$ is a \emph{groupoid} rather than a group (see Prop. \ref{prop:pi1}). 

For $n\geq 2$, $A$-spheres are naturally based at one point which is independent of the homotopy class, so we have a surjection
 $\pi_n(A)\to M$. To see this, one may compose spheres and homotopies with the anchor map; then everything follows from the
 discussion at the beginning of this section (see also Example \ref{ex:tangent:algebroid}). 

We denote $\pi_n(A,x)$ the homotopy classes of spheres based at $x\in M$ ($n\geq 2$). As we shall see, $\pi_n(A)$
 for $n>1$ is a \emph{bundle of groups} over $M$ rather than a group. When $n=1$,
 we will say by convention that an $A$-path $a$ covering $\gamma:I\to M$ is based at $\gamma(0)$. 

\begin{ep}[$A=TL$] \label{ex:tangent:algebroid} The  first observation is that the algebroid homotopy group
$\pi_n(TL)=\pi_n(L)$ because  $TS^n \to TL$ is an algebroid morphism if and only if it
 is the tangent map $T\gamma$ of a map   $\gamma:S^n \to L$.
\end{ep}

\begin{ep}[$A=\g$] \label{ex:g} When $A$ is simply a Lie algebra, $\pi_0(\g)=1$
because the base of $\g$ is trivial. $\pi_1(\g)=G$ the simply connected Lie
group integrating $\g$ \cite{cf,lie-group,van-est}. When $n\ge 2$, we have $\pi_{n}(\g)=\pi_n(G)$.
 In fact, any algebroid morphism $TS^n \to \g$ corresponds by Lie's II theorem of Lie algebroid to a Lie groupoid morphism $S^n
\times S^n \to G$, where $S^n \times S^n$ is given the pair groupoid
 structure over $S^n$. We can do so because $S^{n \ge 2}$ is simply
connected. But any groupoid morphism $\Phi:S^n \times
 S^n\to G$ is equivalent to a map $\phi:S^n\to G$ by defining $\Phi(a,
 b)=\phi(a)\cdot \phi(b)^{-1}$. Thus $\pi_n(\g)=\pi_n(G)$ for $n\ge 2$. 

Another way to see this is to analyze explicitly the Lie algebroid morphisms from $TI^n$ to $\g$. A bundle map $a:TI^n\to \g$
 writes $a=\sum a_k \d t_k$, where $a_k:I^n\to \g$. Then $a$ defines an algebroid morphism \emph{iff} the following relations are satisfied:
$$\frac{\d a_i}{\d t_j}-\frac{\d a_j}{\d t_i}=[a_i,a_j].$$
If we view $a$ as a $\g$-valued connection 1-form of the trivial principal bundle $G\times I^n \to I^n$, then this equation
 tells us exactly that $a$ is flat. On the other hand, if one thinks of $a_i+\partial t_i$ as left invariant vector fields
 on $G\times I^n$, these equations mean that they commute with each other.
 Therefore, $a_i$ may be seen as a $t_i$-time dependent family of vector fields, with parameters $(t_1,\dots,\hat{t}_i,\dots,t_n)$.

Then it is easily seen that the condition for $a$ to be a sphere implies that the image of the identity $1_G$ by the time dependent
 flow $\phi^{a_1}_{t_1,0}$ is a usual sphere $I^n\to G$ whose restriction to $\partial I^n$ is reduced to $1_G$.

It will be useful for our purposes to think this way: in order to work with a morphism $a:TI^n\to A$, where $A$ is an arbitrary
 Lie algebroid, we will extend the $a_i$'s into time-dependent sections $\alpha_i$'s satisfying the above relations
 (see Proposition \ref{prop:morphisms:sections}). Then we can treat $\alpha_i$'s as (right invariant) vector fields on $\pi_1(A)$
 when it is a Lie groupoid. In general, when $\pi_1(A)$ fails to be a Lie groupoid, this treatment is still valid if we replace
 it by $\cH(A)$, the stacky Lie groupoid constructed in \cite{tz}. A similar argument as in the case of groups tells us that
 $\pi_{n\ge 2} (A, x_0)= \pi_{n\ge 2}(s^{-1}(x_0), x_0)$ where $s$ is the source of $\cH(A)\xrightarrow{s} M$. 
 For instance, if $a$ is a sphere based at $x_0$, the boundary conditions easily ensure that one obtains a usual sphere
 in the source fiber over $x_0$ simply by flowing the identity at $x_0$.
\end{ep}

\begin{remark}\label{rem:squares:vs:spheres} When $n\ge 2$, our approach is equivalent to the one that would define spheres as
 pointed morphisms $(TS^n, N) \to (A, x)$ and homotopies as morphisms $f:(TD^n, N) \to (A, x)$, where $N\in S^n$ is the north
 pole and $x$ is a point in the base of $A$.

To show this, one may proceed as follows. First, by showing that any morphism $TS^n\to A$ is homotopic to one that vanishes on
 $T_N S^n$. For this we consider $H:S^n\times I \to S^n$ a smooth map such that
\begin{Itemize}
\item $H|_{S^n\times\{0\}}=\id_{S^n}$;
\item $H|_{S^n\times\{1\}}=\phi$;
\item $H(N,\epsilon)=N$ for any $\epsilon\in I$.
\end{Itemize}
Here $\phi:S^n\to S^n$ is a smooth map with $\phi(N)=N$, $\d \phi_N=0$ and restricting to a diffeomorphism on $S^n-\{N\}$.
 By compositing with $\d H$, we see that any morphism $a:TS^n\to A$ is homotopic to $a\circ \d\phi: TS^n\to A$,
 the latter vanishing on $T_N S^n$.

Secondly, a morphism $TS^n\to A$ that vanishes when restricted to $T_NS^n$ is the same thing as a morphism $c:TI^n\to A$ that vanishes over ${\partial I^n}$.
 This can be seen by considering a smooth map $j:I^n\to S^n$ with  $j(\partial I^n)=N$ and $j|_{I^n-\partial I^n}$ a diffeomorphism onto  $S^n-\{N\}$.

Now, a morphism $TI^n\to A$ that vanishes on $TI^n|_{\partial I^n}$ is not the same as one that vanishes on $T\partial I^n$. However this is true up to homotopy: consider a cut-off function that is, a smooth function $\tau: \R\to \R$ such that, 
\begin{equation}\label{eq:cut-off} \text{$\tau_{\{t\leq 0\}}=0$, $\tau_{\{t\geq 1\}}=1$ and $\tau'(t)>0$ for $t\in]0,1[$.}
\end{equation} Then we define $h:I^{n+1}\to I^n$ by:
$$h(t_1,\dots,t_{n+1}):=\bigl((1-\tau(t_{n+1}))t_1+\tau(t_{n+1})\tau(t_1),\dots,(1-\tau(t_{n+1}))t_n+\tau(t_{n+1})\tau(t_n)\bigr),$$ and simply use composition with $\d h$ to show that any sphere $a:TI^n\to A$ is homotopic to its reparametrization $a\circ \d r^\tau$, where $r^\tau(t_1,\dots,t_n)=(\tau(t_1),\dots,\tau(t_n))$. Then it is easily seen that $a\circ \d r^\tau$ vanishes over $\partial I^n$.
\end{remark}

\subsection{Operations on cubes}\label{sec:concatenation}

As we have already pointed out, we will work with morphisms $TI^n\to A$ rather than the usual simplices $T\Delta^n\to A$.
The corresponding notion of a simplicial set is called a \emph{cubical} set. Define 
 $X_n(A):=\{\text{Lie algebroid morphisms } TI^n\to A\}$. We will be more precise on the notion of faces and degeneracies
 on $X_n(A)$ since we think it is important for understanding $A$-spheres.

First we denote:
\begin{Itemize}
 \item $i_{p,\epsilon}^k: I^{k-1}\to I^k$ the $p$-th injection: $i_{p,\epsilon}^k(t_1,\dots,\hat{t_k},\dots, t_n):=(t_1,\dots, t_{p-1},\epsilon,t_{p+1},\dots,t_n)$,
 \item $\pi_p^k:I^{k+1}\to I^k$ the $p$-th projection: $\pi_p^k(t_1,\dots,t_{k+1}):=(t_1,\dots,\hat{t}_p,\dots, t_{k+1})$
\end{Itemize}
Then on $X_n(A)$, we define:
\begin{Itemize}
\item $2k$ faces maps $d_{p,\epsilon}^k:X_k\to X_{k-1}$ given by:
$d^k_{p,\epsilon}(a):=a\circ \d i_{p,\epsilon}^k\,(p=1,\dots,k; \epsilon=0,1)$
\item $k+1$ degeneracies $s_p^k:X_k\to X_{k+1}$ given by:
$ s_p^k(a):=a\circ \d\pi_p^k\,(p=1,\dots,k+1).$
\end{Itemize}
More explicitly, if we denote $a=\sum_{l=1}^k a_l \d t_l\in X_n(A)$ the components of $a$, then the faces and degeneracies of $a$
 have the following components:
\begin{eqnarray*}
       d_{p,\epsilon}^k(a)(t_1,\dots,t_n)&=&\sum_{l\neq p} a_l(t_1,\dots,t_{p-1},\epsilon,t_{p+1},\dots,t_n)\d t_l\\
       s_p^k(a)(t_1,\dots,t_{n+1})&=&\sum_{l< p}a_l(t_1,\dots,\hat{t}_p,\dots,t_{k+1})\d t_l+\sum_{p<l}a_l(t_1,\dots,\hat{t}_p,\dots,t_{k+1})\d t_{l+1}.
\end{eqnarray*}
As for usual simplices, there are obvious coherence conditions satisfied by faces and degeneracies,
 we will not make them precise since we will not be needing them. Rather, note that by definition $c\in X_n(A)$ is a sphere
 \emph{iff} all its faces are trivial.

Suppose now that we are given two morphisms $a^i:TI^n\to A$ satisfying $d^n_{n,1}a^0=d^n_{n,0}a^1 $.
 Then we define the concatenation $a^1\odot_{t_n} a^0$ as follows:
$$a^1\odot_{t_n} a^0=
\left\{\begin{array}{ccc}  (a^0)^\tau\circ\d u_0,  &\text{ if } t_n\in [0,\frac{1}{2}], \\ 
                           (a^1)^\tau\circ \d u_1        &\text{ if } t_n\in [\frac{1}{2},1].
\end{array}\right.$$
In this expression:
\begin{itemize}
 \item $(a^i)^{\tau}$ denotes the re-parametrization of $a^i$ along the $n$-th coordinate by a cut-off function $\tau$
 as in \eqref{eq:cut-off}, that is, $(a^i)^\tau$ is the composed morphism $TI^n\xrightarrow{dr_n^\tau} TI^n\xrightarrow{a}A$,
 with $r_n^\tau(t_1,\dots,t_n):=(t_1,\dots,t_{n-1}, \tau(t_n))$;
\item the applications $u_i, i=0,1$ are given by:
$$u_0(t_1,\dots,t_n)=(t_1,\dots,t_{n-1}, 2 t_n),\quad  u_1(t_1,\dots,t_n)=(t_1,\dots,t_{n-1}, 2t_{n}-1).$$\end{itemize}
Note here that the composition $(a^i)^\tau\circ\d u_i$ takes into account the base maps, so it covers
 $\gamma_i\circ r_n^\tau\circ u_i$. In fact, let $g_{t_n}$ be the composed map:
\begin{equation} \label{eq:glue-map}
TI^n \to T(I^n\sqcup_{I^{n-1}} I^n) \hookrightarrow TI^n \sqcup TI^n,
\end{equation}
where the first map is the tangent map of 
$$
I^n \to I^n\sqcup_{I^{n-1}} I^n, \quad \left\{
\begin{array}{ccc} &(t_1,\dots,t_n) \mapsto (t_1,\dots,t_{n-1},\tau(2t_n))\;\text{in the right}\;I^n,&\text{if $t_n\in [0, \frac{1}{2}]$,}  \\  
&(t_1, \dots, t_n) \mapsto (t_1, \dots, t_{n-1}, \tau(2t_n-1))\; \text{in the left}\;I^n , & \text{if $t_n\in [ \frac{1}{2}, 1]$} 
\end{array} \right.
$$ Then the resulting map $a^1\odot_{t_n} a^0$ is the composition 
\begin{equation}\label{eq:a-glue} 
a^1\odot_{t_n} a^0: TI^n \xrightarrow{g_{t_n}} TI^n \sqcup TI^n \xrightarrow{a^1 \sqcup a^0} A.  
\end{equation}

\begin{lemma}
 The concatenation $a^1\odot_{t_n} a^0$ is smooth, provided that $a^0,a^1$ are.
\end{lemma}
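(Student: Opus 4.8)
The plan is to verify smoothness componentwise and to reduce everything to the behaviour at the gluing locus $\{t_n=\tfrac12\}$, since away from it the map is manifestly a composition of smooth maps (each piece being smooth up to and including the outer faces $t_n=0,1$). First I would isolate the decisive property of the cut-off $\tau$: being smooth and locally constant (equal to $0$, resp.\ to $1$) on $]-\infty,0]$, resp.\ on $[1,\infty)$, it is \emph{flat} at both endpoints, i.e.\ $\tau^{(j)}(0)=\tau^{(j)}(1)=0$ for every $j\ge1$. Hence the reparametrized $n$-th coordinate functions $\rho_0(t_n):=\tau(2t_n)$ and $\rho_1(t_n):=\tau(2t_n-1)$, which govern the two pieces, are flat at $t_n=\tfrac12$: all their derivatives vanish there, while $\rho_0(\tfrac12)=1$ and $\rho_1(\tfrac12)=0$.

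Next I would write out the components. With $\phi_i:=r_n^\tau\circ u_i$, a direct pullback computation gives that the piece $a^i\circ\d\phi_i$ has components $a^i_l(t_1,\dots,t_{n-1},\rho_i(t_n))$ for $l<n$ and $\rho_i'(t_n)\,a^i_n(t_1,\dots,t_{n-1},\rho_i(t_n))$ for $l=n$, while its base map is $(t_1,\dots,t_n)\mapsto\gamma^i(t_1,\dots,t_{n-1},\rho_i(t_n))$, where $\gamma^i$ is the base map of $a^i$. The verification then splits into two cases. For $l<n$, and likewise for the base map, the value at $t_n=\tfrac12$ equals $a^0_l(\,\cdot\,,1)$ on the left and $a^1_l(\,\cdot\,,0)$ on the right; these agree by the standing hypothesis $d^n_{n,1}a^0=d^n_{n,0}a^1$, which unwinds to exactly $a^0_l(\,\cdot\,,1)=a^1_l(\,\cdot\,,0)$ together with the analogous identity for the base maps. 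Every partial derivative that involves at least one $\partial_{t_n}$ vanishes at $t_n=\tfrac12$ on both sides, by the chain rule (Fa\`a di Bruno) combined with the flatness of $\rho_i$, while the purely transverse derivatives match because the boundary values already match. For $l=n$, the prefactor $\rho_i'(t_n)$ together with all of its derivatives vanishes at $t_n=\tfrac12$, so by the Leibniz rule the whole component is flat there on both sides, and the two sides therefore agree (both vanishing to infinite order).

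Combining the two cases, all partial derivatives of the two pieces coincide along $\{t_n=\tfrac12\}$, so they glue to a genuinely smooth bundle map $TI^n\to A$, which is precisely the composite \eqref{eq:a-glue}; the point is that reparametrizing by the flat cut-off $\tau$ is exactly what promotes the $C^0$-matching guaranteed by $d^n_{n,1}a^0=d^n_{n,0}a^1$ to $C^\infty$-matching. The main obstacle I anticipate is purely the bookkeeping of the chain- and Leibniz-rule expansions needed to confirm that every mixed partial vanishes at the seam; conceptually nothing is subtle once the flatness of $\tau$ at its endpoints has been extracted, but it is worth organizing the Fa\`a di Bruno combinatorics systematically rather than settling for ad hoc low-order checks.
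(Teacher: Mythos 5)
Your proof is correct and follows essentially the same route as the paper's: write out the components of the two reparametrized pieces, use $d^n_{n,1}a^0=d^n_{n,0}a^1$ for the zeroth-order matching of the components $l<n$ (and of the base maps), and use the vanishing of $\tau'$ and all higher derivatives of $\tau$ at the endpoints to kill every partial derivative involving $\partial_{t_n}$ as well as the entire $n$-th component at the seam. If anything, your version is organized a bit more cleanly than the paper's (isolating the flatness of $\tau$ at $0$ and $1$ as the single decisive fact and explicitly treating the base map), but the underlying argument is the same.
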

\begin{proof} 
In order to concatenate $a^0=\sum a^0_k\d t_k$, $a^1=\sum a_k^1\d t_k:TI^n\to A$ along the $n$-th coordinate,
 recall that the composability assumption $d_{n,1}^na^0=d_{n,0}^n a^1$ writes $a^0_k|_{\{t_n=1\}}=a^1_k|_{\{t_n=0\}}$
 for all $k=1,\dots, n-1$. Moreover, the reparametrizations are given by:
\begin{eqnarray*}
  (a^0)^\tau\circ\d u_0
&=&\sum_{k<n}a^0_k(t_1,\dots,t_{n-1},\tau(2t_n))\d t_k+2\tau'(2t_n)a^0_n(t_1,\dots,t_{n-1},\tau(2t_n))\d t_n,\\
  (a^1)^\tau\circ\d u_1
&=&\sum_{k<n}a^1_k(t_1,\dots,t_{n-1},\tau(2t_n-1))\d t_k+2\tau'(2t_n-1)a_n^1(t_1,\dots,t_{n-1},\tau(2t_n-1))\d t_n.
 \end{eqnarray*}

Smoothness for the $n$-th coordinate follows from $\displaystyle\lim_{t_n\to 1/2}((a^0)^\tau\circ \d u_0)_n=\lim_{t_n\to 1/2}((a^1)^\tau\circ \d u_1)_n=0$ as well as all the derivatives. Then we argue the smoothness of the $k$-th component ($k<n$) as follows:
\begin{itemize}
\item continuity is clear because of the composability assumption,
\item we notice that $a^0_k|_{\{t_n=1\}}=a^1_k|_{\{t_n=0\}}$ implies
 $\frac{\partial a^0_k}{\partial t_l}|_{\{t_n=1\}}=\frac{\partial a_k^1}{\partial t_l}|_{\{t_n=0\}}$ for any
 $l=1,\dots,n-1$. Therefore, since $a_k^i$ is smooth, we get:
$$\lim_{t_n\to 1}\frac{\partial a^0_k}{\partial t_l}=a^0_k|_{\{t_n=1\}},\quad\quad
  \lim_{t_n\to 0}\frac{\partial a^1_k}{\partial t_l}=a_k^1|_{\{t_n=0\}}.$$
On the other hand, a direct computation shows that:
$$\displaystyle{\lim_{t_n\to 1}}\frac{\partial {a_k^0}^\tau}{\partial t_l}=\lim_{t_n\to 1}\frac{\partial a_k^0}{\partial t_l},\quad\quad
\displaystyle\lim_{t_n\to 1}\frac{\partial {a^1_k}^\tau}{\partial t_l}=\lim_{t_n\to 1}\frac{\partial a^0_k}{\partial t_l},$$
where ${a_k^i}^\tau$ denote the components of $(a^i)^\tau$.
We conclude that $\displaystyle\lim_{t_n\to 1}\frac{\partial {a^0_k}^\tau}{\partial t_l}
=\lim_{t_n\to 0}\frac{\partial {a^1}_k^\tau}{\partial t_l}$, which ensures the concatenation is of class $C^1$;
\item for $l=n$, it is easily seen that $\displaystyle\lim_{t_n\to 1}\frac{\partial {a^0_k}^\tau}{\partial t_n}=\displaystyle\lim_{t_n\to 0}\frac{\partial {a^1_k}^\tau}{\partial t_n}=0$;
\item for higher derivatives, the argument is the same. Since $a^0_k|_{t_n=1}=a^1_k|_{t_n=0}$, all their derivatives with respect
 to $t_l, t_m$ for $l,m=1, \dots, n-1$ coincide too.
\end{itemize}\end{proof}

We have constructed a concatenation for spheres based at a same point along $t_n$. We  define similarly  concatenations $a_1\odot_{t_i} a_0$ along $t_i$. Since the maps $g_{t_i}$ and $g_{t_j}$ are homotopic to each other by the standard treatment of usual homotopy groups of topological spaces, the following lemma is immediate:
\begin{lemma}\label{lemma:diff-glue}
Concatenation along different parameters gives homotopic spheres.
\end{lemma}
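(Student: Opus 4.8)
The plan is to realize both concatenations as precompositions of the fixed morphism $a^1\sqcup a^0$ with gluing maps, and then transport the classical topological homotopy between those gluing maps to the Lie algebroid setting. Writing $g_{t_i}$ for the analogue of the map \eqref{eq:glue-map} that splits the $i$-th coordinate rather than the $n$-th one, the two concatenations are by definition the compositions
$$a^1\odot_{t_i}a^0=(a^1\sqcup a^0)\circ g_{t_i},\qquad a^1\odot_{t_j}a^0=(a^1\sqcup a^0)\circ g_{t_j}.$$
Since $a^0,a^1$ are spheres, all of their faces are trivial, so the composability condition $d^n_{i,1}a^0=d^n_{i,0}a^1$ holds along \emph{every} coordinate; hence both concatenations are defined and are again spheres. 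It therefore suffices to produce a \emph{smooth} homotopy between $g_{t_i}$ and $g_{t_j}$ and to postcompose it with $a^1\sqcup a^0$.

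Concretely, I would first construct a smooth map $G:I^{n+1}\to I^n\sqcup_{I^{n-1}}I^n$ interpolating between the underlying splitting maps, so that $G|_{\{t_{n+1}=0\}}$ is the $t_i$-splitting and $G|_{\{t_{n+1}=1\}}$ is the $t_j$-splitting. The existence of such an interpolation is exactly the standard fact from the theory of homotopy groups of topological spaces: for $n\ge 2$ the two cube decompositions obtained by dividing along $t_i$ resp.\ $t_j$ are homotopic rel $\partial I^n$, and one passes from one to the other by sliding the two (reparametrized) half-cubes from a side-by-side placement along $t_i$ to a side-by-side placement along $t_j$, keeping them disjoint throughout and keeping $\partial I^n$ mapped into the faces of the two target copies. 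Smoothness of $G$, in particular across the moving seam and at $t_{n+1}=0,1$, is arranged exactly as in the smoothness lemma for the concatenation above, by reparametrizing each coordinate with the cut-off $\tau$ of \eqref{eq:cut-off} so that $G$ becomes flat near every seam.

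Given such a $G$, set $H:=(a^1\sqcup a^0)\circ \d G:TI^{n+1}\to A$. As $\d G$ is the tangent map of a smooth map it is a Lie algebroid morphism, and $a^1\sqcup a^0$ is one by hypothesis, so $H$ is a Lie algebroid morphism. Writing $H=\sum_{k=1}^{n+1}h_k\,\d t_k$, the endpoint conditions $H|_{\{t_{n+1}=0\}}=a^1\odot_{t_i}a^0$ and $H|_{\{t_{n+1}=1\}}=a^1\odot_{t_j}a^0$ hold by construction. For the boundary behaviour, recall that both concatenations send $\partial I^n$ into the faces of the two target copies, on which $a^0$ and $a^1$ vanish; since $G$ keeps $G(\partial I^n\times I)$ inside this locus for all $t_{n+1}$, the vector $\partial G/\partial t_{n+1}$ is mapped by $a^1\sqcup a^0$ to $0$ there, so $h_{n+1}$ vanishes whenever one of $\{t_1,\dots,t_n\}$ equals $0$ or $1$. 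Thus $H$ satisfies Definition \ref{def:homotopy} and exhibits $a^1\odot_{t_i}a^0\simeq a^1\odot_{t_j}a^0$.

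The only genuine work is the middle step: writing down an \emph{honestly smooth} interpolation $G$ whose seams are all flattened by cut-offs and which preserves the boundary behaviour $G(\partial I^n\times I)\subset \partial I^n\sqcup\partial I^n$ for every value of $t_{n+1}$. The topological homotopy itself is classical and elementary, but checking that the chosen reparametrizations render $\d G$ smooth along the sliding seam, and that the face-vanishing of the spheres then forces $h_{n+1}$ to die on $\partial I^n$, is where the care lies; once $G$ is in place, the passage to $A$ is formal, since a composition of Lie algebroid morphisms is again a morphism.
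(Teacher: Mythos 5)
Your argument is exactly the one the paper uses: it observes that the gluing maps $g_{t_i}$ and $g_{t_j}$ are homotopic by the standard topological argument and declares the lemma immediate, which is precisely your strategy of precomposing the fixed morphism $a^1\sqcup a^0$ with a (smoothed) homotopy of gluing maps. Your write-up simply makes explicit the smoothing and boundary checks that the paper leaves implicit, so the proposal is correct and essentially identical in approach.
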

\subsection{Properties of the homotopy groups}
We can now state the first properties of homotopy groups. First of all, it is easy to see from the definitions that 
\begin{prop}\label{prop:pi1}
Given a Lie algebroid $A$, the first homotopy group $\pi_1(A)$ is the topological
Weinstein groupoid of $A$ as in \cite{cf}.
\end{prop}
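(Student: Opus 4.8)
The plan is to show that both objects are the very same quotient of the very same space, so the proof is an unwinding of definitions followed by a check that the groupoid structures agree. First I would specialize Definition \ref{def:sphere} to $n=1$. For $n=1$ the index set $\{t_1,\dots,\hat t_k,\dots,t_n\}$ is empty, so the boundary condition is vacuous and a $1$-dimensional $A$-sphere is simply a Lie algebroid morphism $a=a_1\,\d t_1:TI\to A$. This is precisely an $A$-path in the sense of Crainic--Fernandes, so $\cS_1(A)$ coincides with the space $P(A)$ of $A$-paths of \cite{cf}.

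Next I would specialize Definition \ref{def:homotopy} to $n=1$. A homotopy is a morphism $h=h_1\,\d t_1+h_2\,\d t_2:TI^2\to A$ with $a_\epsilon=h_1(\cdot,\epsilon)\,\d t_1$ for $\epsilon=0,1$, subject to the requirement that $h_2$ vanish whenever $t_1\in\{0,1\}$. The vanishing of $h_2$ at the endpoints is exactly the fixed-endpoint condition defining an $A$-homotopy in \cite{cf}, while the morphism condition on $h$ reproduces the Maurer--Cartan-type equation for an $A$-homotopy. Hence the equivalence relation on $\cS_1(A)$ agrees with $A$-homotopy, and as a set $\pi_1(A)=P(A)/\!\sim$ is literally the underlying set of the Weinstein groupoid $\G(A)$.

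It then remains to match the groupoid structure. The source and target maps send a class $[a]$ to $\gamma(0)$ and $\gamma(1)$, where $\gamma$ is the base path underlying $a$; units are the classes of the trivial $A$-paths over constant base points (those with $a_1=0$); and inverses are represented by the reversed $A$-paths. The composition is the concatenation $\odot_{t_1}$ of Section \ref{sec:concatenation}, whose well-definedness on classes and whose associativity, unit, and inverse laws up to $A$-homotopy are the $n=1$ instances of the concatenation construction there (its smoothness being the unnamed lemma preceding Lemma \ref{lemma:diff-glue}). These are precisely the structure maps of $\G(A)$ in \cite{cf}. Finally, since both $\pi_1(A)$ and $\G(A)$ carry the quotient topology induced from the same space $P(A)$ by the same relation, the identification is a homeomorphism of topological groupoids.

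The step I expect to demand the most care is not any single computation but the faithful translation of conventions: one must confirm that the endpoint-fixing condition on $h_2$, the cut-off reparametrizations $(a^i)^\tau$ concealed inside $\odot_{t_1}$, and the source/target orientation all coincide with those of \cite{cf}, so that the two groupoids agree on the nose rather than merely up to isomorphism. Since all of these points are purely definitional and the required reparametrization invariance is already established, no genuine analytic obstacle arises, which is why the statement can be asserted as immediate from the definitions.
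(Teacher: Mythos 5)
Your proposal is correct and matches the paper's treatment: the paper offers no written proof beyond the remark that for $n=1$ Definitions \ref{def:sphere} and \ref{def:homotopy} reduce verbatim to $A$-paths and $A$-path homotopies of Crainic--Fernandes, which is exactly your unwinding of the (vacuous for $n=1$) boundary condition and the endpoint-fixing condition on $h_2$. Your additional verification that concatenation, units, and inverses reproduce the structure maps of the Weinstein groupoid is a harmless elaboration of the same definitional argument.
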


\begin{prop} For any $n\ge 2$, $\pi_{n} (A)$ is a bundle of groups.
\end{prop}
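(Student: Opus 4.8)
The plan is to show that for each $n\ge 2$ the set $\pi_n(A)$ carries a fiberwise group structure over $M$, where the group operation on each fiber $\pi_n(A,x)$ is induced by the concatenation operation $\odot_{t_i}$ constructed in Section \ref{sec:concatenation}. First I would recall that for $n\ge 2$ each $A$-sphere $a$ is naturally based at a single point $x\in M$ (the common value of the base map on $\partial I^n$), so that $\pi_n(A)$ fibers over $M$ via $[a]\mapsto x$, and I would carry out the verification of the group axioms separately on each fiber $\pi_n(A,x)$. The candidate product of two classes $[a^0],[a^1]\in\pi_n(A,x)$ is $[a^1\odot_{t_n}a^0]$; the preceding lemma guarantees this concatenation is a smooth $A$-sphere based at $x$, so the operation is well defined on representatives.

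The key steps, in order, are the following. First I would check that $\odot_{t_n}$ descends to homotopy classes: if $a^0\sim b^0$ and $a^1\sim b^1$ then $a^1\odot_{t_n}a^0\sim b^1\odot_{t_n}b^0$, which follows by concatenating the given homotopies along $t_n$ (here the boundary-vanishing conditions of Definition \ref{def:homotopy} ensure composability is preserved through the homotopy, and the reparametrization by the cut-off $\tau$ in \eqref{eq:cut-off} keeps everything smooth). Second, associativity up to homotopy: $(a^2\odot_{t_n}a^1)\odot_{t_n}a^0\sim a^2\odot_{t_n}(a^1\odot_{t_n}a^0)$ is obtained by the standard reparametrization homotopy that reshuffles the subdivision of the $t_n$-interval, transported through $\d$ of the corresponding map $I^n\to I^n$. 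Third, the identity element of $\pi_n(A,x)$ is the class of the constant (degenerate) sphere over $x$, and I would produce the homotopy contracting $a\odot_{t_n} e_x$ back to $a$ by a reparametrization of the $t_n$-coordinate. Fourth, inverses: the inverse of $[a]$ is the class of the sphere $\bar a$ obtained by reflecting $a$ in the $t_n$-variable (precomposing with $\d$ of $t_n\mapsto 1-t_n$), and the standard ``back-and-forth'' contraction shows $\bar a\odot_{t_n} a\sim e_x$. Finally, for the abelian and well-posedness issues I would invoke Lemma \ref{lemma:diff-glue}: concatenation along $t_n$ and along $t_i$ give homotopic results, and the usual Eckmann--Hilton argument (available precisely because $n\ge 2$ provides at least two independent gluing directions) shows the operation is independent of the chosen direction and is commutative; this is also what makes the group structure canonical rather than dependent on a choice.

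The one genuine point requiring care, and the main obstacle, is that all of these homotopies must be realized by honest smooth Lie algebroid morphisms $TI^{n+1}\to A$, not merely continuous maps of topological spaces. The topological templates for associativity, unit, and inverse are classical, but here each must be built by precomposing an $A$-morphism with $\d\Phi$ for a suitable smooth self-map $\Phi$ of a cube (or a glued cube), and one must check smoothness at the seams and that the boundary-vanishing conditions of Definition \ref{def:homotopy} hold so that the result is a legitimate homotopy of $A$-spheres. This is exactly the kind of seam-smoothness verification already carried out in the concatenation lemma, so I would reuse the cut-off reparametrization $(a^i)^\tau$ there to guarantee that all partial derivatives match across the gluing hyperplane $\{t_n=\tfrac12\}$; the analogous estimates make each structural homotopy smooth. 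Once these smoothness checks are in hand, the algebraic verification of the group axioms on each fiber is formally identical to the topological case, and the collection of fiberwise groups assembles into a bundle of groups over $M$, completing the proof.
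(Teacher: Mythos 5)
Your proposal is correct and follows essentially the same route as the paper: the group structure comes from the concatenation $\odot_{t_n}$ of Section \ref{sec:concatenation}, and the only genuinely delicate point is that every structural homotopy (in particular $a\sim a^\tau$) must be realized as a smooth Lie algebroid morphism $TI^{n+1}\to A$ obtained by precomposing with $\d$ of a cut-off reparametrization of the cube, which is exactly the paper's argument via the map $h_n^\tau$. The paper simply states this more tersely, deferring commutativity to a separate proposition, whereas you spell out the remaining axioms explicitly; there is no substantive difference in method.
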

\begin{proof}
It is clear that the concatenation defined in \ref{sec:concatenation} allows to concatenate any couple of spheres based at a
 same point. The only thing we have to check is that any morphism $a:TI^n\to A$ is homotopic to its reparametrization $a^\tau$,
 in order to make sure that the composition law is well-defined. This is easily achieved by composing $a$ with $\d h_n^\tau:TI^{n+1}\to TI^n$, where:
$$h_n^\tau(t_1,\dots, t_{n+1}):=(t_1,\dots, t_{n-1}, (1-\tau(t_{n+1}))t_n+\tau(t_{n+1})\tau(t_n)).$$
\end{proof}

\begin{prop} The fibre of $\pi_{n\ge 2} (A)$ is abelian.
\end{prop}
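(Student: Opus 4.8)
The plan is to run the classical Eckmann--Hilton argument, now in the smooth cubical setting developed above. Fix a base point $x\in M$ and work inside the fibre $\pi_n(A,x)$; since $n\ge 2$ there are at least two coordinate directions available, so we may concatenate along $t_1$ and along $t_2$. By the previous proposition each of $\odot_{t_1}$ and $\odot_{t_2}$ descends to a well-defined group operation on homotopy classes, both having the class of the constant sphere $e$ as a two-sided unit, and by Lemma~\ref{lemma:diff-glue} these two operations induce one and the same group structure on $\pi_n(A,x)$.

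The one substantial point is the \emph{interchange law}: for any four spheres $a,b,c,d$ based at $x$ (the composability conditions along both directions being automatic, since all faces of a sphere are trivial) one has, up to homotopy,
\begin{equation*}
(a\odot_{t_2} b)\odot_{t_1}(c\odot_{t_2} d)\;\sim\;(a\odot_{t_1} c)\odot_{t_2}(b\odot_{t_1} d).
\end{equation*}
To establish this I would subdivide the square spanned by $(t_1,t_2)$ into four sub-rectangles and place $a,b,c,d$ in the four quadrants, exactly as in the definition of $\odot_{t_n}$ but now in two directions at once; concretely, one builds a single morphism $TI^n\to A$ by composing the four spheres with the tangent map of the appropriate piecewise-affine quartering map, reparametrized near each seam by a cut-off function $\tau$ as in \eqref{eq:cut-off}. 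Both sides of the displayed identity are then reparametrizations of this single four-quadrant morphism, and a reparametrization homotopy (of the same kind used in the preceding proposition to show $a\sim a^\tau$) identifies each side with it.

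With the interchange law in hand the conclusion is formal. Writing $\cdot=\odot_{t_1}$ and $\star=\odot_{t_2}$, and inserting units, we obtain first
\begin{equation*}
a\cdot b=(a\star e)\cdot(e\star b)=(a\cdot e)\star(e\cdot b)=a\star b,
\end{equation*}
so the two operations coincide, and then
\begin{equation*}
a\cdot b=(e\star a)\cdot(b\star e)=(e\cdot b)\star(a\cdot e)=b\star a=b\cdot a,
\end{equation*}
which is commutativity. Hence every fibre $\pi_n(A,x)$ is abelian.

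The main obstacle is the interchange law, and within it the smoothness of the four-quadrant morphism at the corner where all four sub-rectangles meet. This is handled exactly as in the concatenation Lemma: the cut-off reparametrization forces all components and their derivatives to vanish along each seam, so the four smooth pieces glue to a morphism of class $C^\infty$; the homotopies relating the two groupings to the common four-quadrant sphere are then produced by the same cut-off/reparametrization technique, so no new analytic input is required beyond what was already used above.
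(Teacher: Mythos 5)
Your proof is correct and is essentially the paper's own argument with the details filled in: the paper's proof simply says to ``transport the usual proof'' that higher homotopy groups are abelian by precomposing with reparametrizations of cubes, and your Eckmann--Hilton interchange argument, with the four-quadrant gluing made smooth by the same cut-off trick used in the concatenation lemma, is exactly that usual proof. No gap.
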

\begin{proof}
By a reasoning similar to Remark \ref{rem:squares:vs:spheres} one can transport the usual proof for $\pi_n(S^n)$ and use composition at the source to show that $a_1\odot a_0$ is homotopic to $a_0\odot a_1$.
\end{proof}

Our main theorem is Theorem \ref{thm:long-exact-seq} stated in the
introduction.  Notice that a Lie algebroid morphism $f:TI^n \to A$ will stay within an
orbit, that is to say that there is an orbit $L$ such that $f$ is just
$f: TI^n \to A|_L$. Thus $\pi_n(A)|_L=\pi_n(A|_L)$. In particular, $\pi_n (A)$ has the same fibre as $\pi_n(A|_L)$
at a point $x\in L$. Hence to study $\pi_n(A)$, we only have to study
$\pi_n(A|_L)$. The following corollary will tell us that the homotopy group $\pi_n(A)$ is
determined by the homotopy groups of the leaves and isotropy groups.

\begin{cor}\label{cor:A-L}
Let $L$ be an orbit (hence connected) of $A$, then we have a fibration $ A|_L
 \xrightarrow{\sharp} TL$ of Lie algebroids with the fibre
 $\ker\sharp|_L$ a locally trivial bundle of Lie algebras with fiber type $\g$. We denote $G$ the simply connected Lie group
 integrating $\g$.

 By Theorem \ref{thm:long-exact-seq} and Prop. \ref{prop:pi1}, we obtain a long exact sequence
\begin{equation}\label{eq:A-L}
\begin{split}
 &\dots \to \underline{\pi_n(G)}\to \pi_n(A|_L) \to \pi_n(L) \to \underline{\pi_{n-1}(G)} \to \\
 &\dots \to   \pi_2(L) \xrightarrow{\partial_2} \cG(\ker \sharp) \to \G(A|_L) \to \G(TL) \to 1
\end{split}
\end{equation}
where $\underline{\pi_n(G)}$ is a locally trivial bundle over $L$ with
fibre $\pi_n(G)$, and $\cG(-)$ is the topological
Weinstein groupoid constructed in \cite{cf} of the corresponding Lie
algebroid. Moreover
\begin{enumerate}
 \item \label{itm:pi} $\pi_2(A|_L)=\ker \partial_2$;
\item \label{itm:partial} Point-wisely, $\partial_2$ is  exactly the map $\partial$ in \cite[Prop. 3.5]{cf}, thus $\im \partial_2$
 is a bundle of groups whose fibre at $x$ is the monodromy group $\tilde{\mathcal{N}}_x(A)$ in \cite[Def. 3.2]{cf},
 which controls the integrability of $A$. 
\end{enumerate}
\end{cor}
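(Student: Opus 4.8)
First I would check that $\sharp\colon A|_L\to TL$ really is a fibration in the sense of our definition. Since $L$ is an orbit, $\sharp_x\colon A_x\to T_xL$ is surjective for every $x\in L$, so $A|_L$ is transitive and $\sharp$ is a surjective Lie algebroid morphism covering the identity $\id_L$ (trivially a submersion). By the structure theory of transitive Lie algebroids (the Atiyah sequence, \cite{MK2}), the kernel $\K=\ker\sharp|_L$ is a locally trivial bundle of Lie algebras with fibre type $\g$, exactly as the corollary asserts. The crucial observation is that \emph{completeness is automatic here}: because the base is a tangent algebroid we have $\sharp_B=\id_{TL}$, and for any Ehresmann connection $\sigma$ one has $\sharp_E(\sigma(\alpha))=\pi(\sigma(\alpha))=\alpha=\sharp_B(\alpha)$, so $\sharp_E(\sigma(\alpha))$ is complete precisely when $\sharp_B(\alpha)$ is and the completeness condition is vacuous. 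Hence any connection works and $\sharp\colon A|_L\to TL$ is a fibration.

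\textbf{Step 2: the sequence and the identification of its terms.} Next I would feed this fibration into Theorem \ref{thm:long-exact-seq}, producing the long exact sequence with fibre $\K$, and then recognise each term. Since $p=\id_L$, we have $p^*\pi_n(TL)=\pi_n(TL)$, which Example \ref{ex:tangent:algebroid} identifies with the ordinary homotopy bundle $\pi_n(L)$. As $\K$ is the kernel of the anchor its own anchor vanishes, so its orbits are points and $\pi_n(\K)$ is computed fibrewise; Example \ref{ex:g} then gives $\pi_{n\ge 2}(\g_x)=\pi_n(G)$ and $\pi_1(\g_x)=G$, and local triviality of $\K$ assembles these into the locally trivial bundle $\underline{\pi_n(G)}$ for $n\ge 2$ and into $\cG(\ker\sharp)$ for $n=1$. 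Proposition \ref{prop:pi1} identifies the bottom terms with the Weinstein groupoids $\G(A|_L)$ and $\G(TL)$, and the terminal surjectivity $\G(A|_L)\to\G(TL)\to 1$ follows from the lifting (Kan) property of a fibration (Theorem \ref{thm:lift}): every $TL$-path is the anchor-projection of its horizontal lift, so $\pi_1$ is onto. All maps and exactness are read fibrewise over the connected base $L$ and assemble into bundles of groups, yielding \eqref{eq:A-L}.

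\textbf{Step 3: statement (1).} For $\pi_2(A|_L)=\ker\partial_2$ the only extra input is Cartan's theorem $\pi_2(G)=0$ for a Lie group $G$, so $\underline{\pi_2(G)}=0$. In the segment $\underline{\pi_2(G)}\to\pi_2(A|_L)\xrightarrow{\pi_2}\pi_2(L)\xrightarrow{\partial_2}\cG(\ker\sharp)$, exactness at $\pi_2(A|_L)$ forces $\pi_2$ to be injective, while exactness at $\pi_2(L)$ gives $\im\pi_2=\ker\partial_2$; together these identify $\pi_2(A|_L)$ with $\ker\partial_2$.

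\textbf{Step 4: statement (2), the main obstacle.} The remaining and hardest task is to match our transgression $\partial_2$ pointwise with the Crainic--Fernandes boundary map of \cite[Prop. 3.5]{cf}. I would unwind the construction of $\partial_2$ from Section \ref{sec:const-trans}: a class in $\pi_2(L,x)=\pi_2(TL,x)$ is represented by a $TL$-sphere $TI^2\to TL$, one lifts it horizontally through the chosen connection to a map into $A|_L$, and the failure of this lift to be a genuine $\K$-sphere --- governed by the curvature $2$-form $\omega$ --- determines a class in $\pi_1(\K_x)=G$. I would then verify that this curvature integral coincides with the element of $\tilde{G}(\g_x)$ produced by the monodromy recipe of \cite{cf}, so that $\partial_2$ agrees pointwise with their $\partial$. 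The main obstacle is precisely this comparison: reconciling our connection-and-curvature description of the transgression with their monodromy construction, including a careful matching of basepoints and orientations. Granting the match, $\im\partial_2$ is the bundle $x\mapsto\tilde{\mathcal{N}}_x(A)$ of monodromy groups of \cite[Def. 3.2]{cf}, which recovers their integrability obstruction.
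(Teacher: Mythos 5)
Your proposal is correct and follows essentially the same route as the paper: completeness is automatic because the base is a tangent algebroid, the terms are identified via Examples \ref{ex:tangent:algebroid} and \ref{ex:g} together with local triviality of $\ker\sharp$, item (1) comes from $\pi_2(G)=0$ plus exactness, and item (2) is obtained by unwinding the construction of $\partial_2$ in Section \ref{sec:const-trans} and matching it with the Crainic--Fernandes monodromy map. You are somewhat more explicit than the paper on two points it leaves terse --- the terminal surjectivity $\G(A|_L)\to\G(TL)\to 1$ via the lifting property, and the actual comparison of $\partial_2$ with $\partial$, which the paper dispatches by a renaming of variables --- but these are elaborations, not a different argument.
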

\begin{proof}
Any splitting of $A|_L \xrightarrow{\sharp}L$ gives us an Ehresmann
connection and it is automatically complete by the definition of
completeness. By Example \ref{ex:tangent:algebroid}, when $n\ge 2$, we have
$\pi_{n}(TL) =\pi_{n}(L)$ as a fibre bundle over $L$. 
Note that $\ker \sharp$ is a locally trivial Lie algebra bundle over
$L$, so locally $\pi_n(\ker \sharp)|_{\mathcal{U}}=\pi_n(G) \times
\mathcal{U}$ is a constant fibre bundle over $L$ when $n\ge 2$ (see Example \ref{ex:g}).

Then item \eqref{itm:pi} follows from a part of the long exact sequence \eqref{eq:A-L}
 \[
 \dots \underline{\pi_2(G)}\to \pi_2(A|_L) \to \pi_2(L) \xrightarrow{\partial_2} \G(\ker\sharp) \to \dots
\]and the fact that $\pi_2(G)=0$ for any finite dimensional Lie group.

The transgression map $\partial_2$ is constructed in Section
\ref{sec:const-trans} by lifting the sphere, and then restricting on a
certain boundary. In the context there, if we take $a_1$ to be $a$, $a_2$ to be $b$, $t_1$ to
be $t$, and $t_2$ to be $\epsilon$,  it is easy to see that  our
construction of $\partial_2: \pi_2(L) \to \cG(\ker \sharp)$ gives the
construction of $\partial$ in \cite[Prop. 3.5]{cf}. 
\end{proof}

\begin{cor}\label{cor:count} For $n\ge 2$, $\pi_{n} (A)$ has countable fibres. Moreover
restricting on an orbit $L$,   $\pi_n(A)|_L=\pi_n(A|_L)$ is  an \'etale bundle over $L$.
\end{cor}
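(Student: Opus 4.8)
The plan is to deduce both statements from the long exact sequence of Corollary~\ref{cor:A-L}, which applies because restricting $A$ to the orbit $L$ yields the fibration $A|_L \xrightarrow{\sharp} TL$ whose fibre $\ker\sharp$ is a locally trivial bundle of Lie algebras. Since any morphism $TI^n\to A$ stays within a single orbit we already have $\pi_n(A)|_L=\pi_n(A|_L)$, so it suffices to analyse $\pi_n(A|_L)$. Throughout I would use the identifications $\pi_n(TL)=\pi_n(L)$ (Example~\ref{ex:tangent:algebroid}) and $\pi_n(\ker\sharp)=\underline{\pi_n(G)}$ (Example~\ref{ex:g}), noting that both are bundles with \emph{discrete} fibres: the latter is locally trivial with fibre the discrete group $\pi_n(G)$, and the former is the usual local system of homotopy groups of the manifold $L$.

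For countability I would fix $x\in L$ and read off from exactness the short exact sequence $1\to \underline{\pi_n(G)}_x/\im\partial_{n+1}\to \pi_n(A|_L,x)\to \ker\partial_n\to 1$, where $\ker\partial_n\subseteq\pi_n(L,x)$. Both $G$ and $L$ are second countable manifolds, so $\pi_n(G)$ and $\pi_n(L,x)$ are countable groups; hence the quotient on the left and the subgroup on the right are countable, and $\pi_n(A|_L,x)$, being an extension of the one by the other, is itself countable.

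For the étale statement, the fibres being discrete reduces the claim to local triviality of $\pi_n(A|_L)\to L$. Here I would work in the abelian category of local systems of abelian groups over the connected base $L$ (recall that $\pi_{n\ge 2}$ is abelian). Both $\pi_n(L)$ and $\underline{\pi_n(G)}$ are objects of this category, and the maps $i_n,\pi_n,\partial_n$ are morphisms of local systems; hence $\ker\partial_n$ and $\underline{\pi_n(G)}/\im\partial_{n+1}$ are again local systems, and so is their extension $\pi_n(A|_L)$. Restricting to a simply connected (e.g.\ contractible) neighbourhood $\U$ of $x$ then trivialises the entire exact sequence, giving $\pi_n(A|_L)|_{\U}\cong \pi_n(A|_L,x)\times\U$; as a locally trivial bundle with discrete fibre it is étale over $L$.

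The main obstacle is precisely the assertion that the connecting maps $\partial_n$ are morphisms of local systems — equivalently, that the monodromy of $\pi_1(L)$ commutes with the transgression, so that the extension class defining $\pi_n(A|_L)$ is locally constant. I expect to extract this from the naturality of the construction of $\partial_n$ in Section~\ref{sec:const-trans}: completeness of the Ehresmann connection lets one parallel transport spheres and their lifts along paths in $L$, and one must check that this transport is compatible with lifting and with the restriction to the boundary that defines $\partial_n$. It is worth stressing that one cannot shortcut this via Proposition~\ref{thm:loc:triv}: although that result trivialises the \emph{algebroid} $A|_L$ over $\U$ as $\g\times T\U$, the homotopy bundle is a global invariant whose spheres need not remain inside $\U$, so local triviality of $\pi_n(A|_L)$ genuinely has to be imported from the local constancy of the full exact sequence.
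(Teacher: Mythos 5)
Your countability argument is the paper's own: read it off from the long exact sequence of Corollary~\ref{cor:A-L}, using that $\pi_n(G)$ and $\pi_n(L,x)$ are countable for second countable manifolds; your explicit two-step extension $1\to \underline{\pi_n(G)}_x/\im\partial_{n+1}\to \pi_n(A|_L,x)\to\ker\partial_n\to 1$ is a harmless refinement of the same idea.

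For the \'etale statement, however, your route diverges from the paper's and contains a genuine gap. The paper does not argue through the exact sequence at all: it equips $\pi_n(A)$ with the topology induced from mapping spaces, observes that a countable fibre must then be discrete, and runs the classical path-transport argument (the one showing that $\sqcup_x\pi_n(M,x)$ is a locally trivial bundle over $M$) directly on $A$-spheres to obtain local triviality of $\pi_n(A|_L)\to L$. Your alternative fails at two points. First, the compatibility of $\partial_n$ with the $\pi_1(L)$-monodromy is exactly the step you defer to ``naturality''; establishing it requires transporting lifted spheres along paths and checking compatibility with the boundary restriction of Section~\ref{sec:const-trans}, which is no easier than the direct transport argument, so nothing is saved. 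Second, and more seriously, even granting that step, knowing that a bundle of groups is fibrewise an extension of one local system by another does not make it locally trivial: local triviality of the outer terms of a short exact sequence of bundles of groups says nothing about the middle term unless you already know that the middle term carries a topology for which the sequence admits continuous local sections --- which is essentially the statement to be proved. The correct repair is the paper's: use the mapping-space topology and discreteness of the (countable) fibres, and produce local trivialisations of $\pi_n(A|_L)$ itself by dragging $A$-spheres along a continuously chosen family of paths joining nearby basepoints in $L$. Your closing remark that Proposition~\ref{thm:loc:triv} cannot be used as a shortcut is nevertheless a fair and correct observation.
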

\begin{proof} The fact that $\pi_n(A)$ has countable fibres follows directly from Cor. \ref{cor:A-L} since both
  $\pi_n(G)$ and $\pi_n(L)$ are countable. The space of $\pi_n(A)$ has a natural
  topology induced from mapping spaces. Since the fibre of
  $\pi_n(A)$ is countable it must be discrete. Then the same proof to show that  the usual homotopy groups $\sqcup_x \pi_n(M, x)$ form a locally trivial fibration over $M$ shows that $\pi_n(A)|_L$ is also a local trivial fibration. Since $\pi_n(A)|_L$ has discrete fibres, $\pi_n(A)|_L$ is an \'etale bundle. 
\end{proof}

Take a linear splitting $\sigma: TL \to A_L$ of the anchor $\sharp$. The curvature of $\sigma$ is the element $\Omega_\sigma\in \Omega^2(L; \ker \sharp)$ defined by 
\[ \Omega_\sigma(X, Y):= \sigma([X, Y]) - [\sigma(X), \sigma(Y)]. \]
Then combining with \cite[Lemma 3.6]{cf} and Cor. \ref{cor:A-L}, we have:

\begin{cor} \label{cor:compute} Let $L$ be the orbit at $x$, then 
$\pi_2(A)_x=\{[\gamma]\in \pi_2(L, x): \int_\gamma \Omega_\sigma =0\}$ if $\Omega_\sigma$ takes values in the center of $\ker\sharp$.
\end{cor}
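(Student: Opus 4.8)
The plan is to identify $\pi_2(A)_x$ with the kernel of the transgression $(\partial_2)_x$ and then to evaluate that kernel explicitly in the central case. First I would recall from the discussion preceding the statement that any Lie algebroid morphism $TI^n\to A$ stays within a single orbit, so that $\pi_2(A)|_L=\pi_2(A|_L)$ and hence $\pi_2(A)_x=\pi_2(A|_L)_x$. Corollary \ref{cor:A-L} applies to the fibration $A|_L\xrightarrow{\sharp}TL$, since (as noted in its proof) any linear splitting of $\sharp$ is automatically a complete Ehresmann connection; part \eqref{itm:pi} then gives $\pi_2(A|_L)=\ker\partial_2$. Combined with Example \ref{ex:tangent:algebroid}, which identifies the domain $\pi_2(TL)_x$ with $\pi_2(L,x)$, this yields the pointwise statement $\pi_2(A)_x=\ker(\partial_2)_x\subseteq\pi_2(L,x)$.

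Next I would compute $(\partial_2)_x$ on a class $[\gamma]\in\pi_2(L,x)$ under the hypothesis that $\Omega_\sigma$ is central. By part \eqref{itm:partial} of Corollary \ref{cor:A-L}, $(\partial_2)_x$ agrees pointwise with the Crainic--Fernandes transgression $\partial$ of \cite[Prop. 3.5]{cf}, so it suffices to invoke \cite[Lemma 3.6]{cf}. The key preliminary observation is that centrality of $\Omega_\sigma$ makes the induced connection $\Dd$ on $\K=\ker\sharp|_L$ flat: the structure equation $\text{Curv}_{\Dd}(\xi_1,\xi_2)(\kappa)=[\omega(\xi_1,\xi_2),\kappa]_\K$ recalled in the introduction vanishes identically once $\omega=\Omega_\sigma$ takes values in the center. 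Flatness, together with the simple connectivity of $S^2$, trivializes $\gamma^*\K$ over $S^2$ and makes parallel transport path-independent, so that $\int_\gamma\Omega_\sigma$ is a well-defined element of the fixed vector space $Z(\g)$ (the center of $\K_x\cong\g$); and \cite[Lemma 3.6]{cf} then gives $(\partial_2)_x([\gamma])=\int_\gamma\Omega_\sigma$.

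Finally I would argue that this value is the identity of $\cG(\ker\sharp)_x$ precisely when it vanishes as a vector. Here the source-simple-connectivity of the Weinstein groupoid is essential: the abelian Lie algebra $Z(\g)$ integrates, in the source-simply-connected sense, to the vector group $(Z(\g),+)$ rather than to a quotient torus, and $\int_\gamma\Omega_\sigma$ represents the class of the corresponding constant $\K$-path in $\cG(\ker\sharp)_x$. Consequently $(\partial_2)_x([\gamma])$ is trivial iff $\int_\gamma\Omega_\sigma=0$, and combining with the first paragraph gives $\pi_2(A)_x=\{[\gamma]\in\pi_2(L,x):\int_\gamma\Omega_\sigma=0\}$.

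I expect the main obstacle to be the careful matching of conventions across the three inputs: verifying that the curvature $\Omega_\sigma$ of the anchor splitting used here is literally the $2$-form entering \cite[Lemma 3.6]{cf}, and pinning down the target identification. For the latter the delicate point is that $\int_\gamma\Omega_\sigma$ is a priori only a central Lie algebra element, whereas $(\partial_2)_x$ takes values in the groupoid $\cG(\ker\sharp)_x$; the equivalence between vanishing of the integral and triviality in $\cG(\ker\sharp)_x$ rests on $\cG$ being the source-simply-connected integration, so that the center contributes a vector group with no lattice to quotient by. Checking this last point --- essentially that the monodromy has \emph{not} yet been reduced modulo periods at the level of $\cG(\ker\sharp)$, in contrast with what happens after passing to an integrating groupoid of $A$ --- is where I would concentrate the effort.
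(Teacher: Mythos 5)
Your proposal is correct and follows essentially the same route the paper intends: the paper gives no detailed argument, stating only that the corollary follows by ``combining \cite[Lemma 3.6]{cf} and Cor.~\ref{cor:A-L}'', which is exactly the chain you spell out (item \eqref{itm:pi} to get $\pi_2(A)_x=\ker(\partial_2)_x$, item \eqref{itm:partial} to identify $\partial_2$ with the Crainic--Fernandes transgression, and the cited lemma to evaluate it as $\int_\gamma\Omega_\sigma$ in the central case). Your extra care about the target --- that in the source-simply-connected integration the center contributes a vector group, so vanishing of the integral is equivalent to triviality in $\cG(\ker\sharp)_x$ --- is a legitimate point the paper leaves implicit, and it is handled correctly.
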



\subsection{Examples from Poisson geometry}

The transgression maps are difficult to compute explicitly in general (in fact, at least as difficult as for usual fibrations).

However, in the case $\K$ is a bundle of abelian Lie algebras, the second transgression $\partial_2$
 is rather easy to describe since it is given by an integration as we see in Cor. \ref{cor:compute} for the fibration
 $\ker \sharp \to A|_L \to TL$. We will explain this  in the next examples.
\begin{ep}
 Recall that a Poisson manifold $(M,\Pi)$ can be considered as a Jacobi manifold \cite{CZ}; the associated Lie algebroid structure
 lies on $\mathbb{R}\oplus T^*M\to M$ rather than on $T^*M$, and fits into a fibration of Lie algebroids,
$$\mathbb{R}\times M\hookrightarrow \mathbb{R}\oplus T^*M\twoheadrightarrow T^*M.$$
So there is a map $\partial_2:\pi_2(T^*M)\to \G(\mathbb{R}\times M)\simeq\mathbb{R}\times M$.

On the other hand, one may apply Cor. \ref{cor:A-L} to identify $\pi_2(T^*M)$: for any symplectic leaf $L$ in $M$,
 we get a fibration:
$$ \ker\Pi^\sharp|_{L}\hookrightarrow T^*M|_{L}\twoheadrightarrow TL,$$
and we deduce a transgression map $\partial'_2:TL\to \G(\ker \Pi^\sharp|_L)$,
 so that $\pi_2(T^*M)|_L=\pi_2(T^*M|_L)=\ker\partial_2'=\{S\in\pi_2(TL)| \partial_2'S=1\}$.

The image of $\partial_2$ can be obtained by integrating the symplectic form $\omega_L$ over elements
 of $\pi_2(T^*M)\subset\pi_2(TL)$ (see \cite[Lemma 4.4]{CZ}). 

 We know \cite{Br} that $\im \partial_2$ measures the integrability of $\mathbb{R}\oplus T^*M$ provided $T^*M$ is integrable,
 so we recover this way the result of \cite{CZ}.
 Note also that $\pi_2(\mathbb{R}\oplus T^*M)|_{L}=\{S\in\pi_2(TL)|\partial_2'S=1,\partial_2S=0\}$.
\end{ep}

\begin{ep} More generally, one may consider a Poisson structure $\Pi$,  and a bi-vector $\Lambda \in \Gamma(\Lambda^2T^*M)$
 satisfying $[\Lambda,\Pi]=0$. Then seeing $\Lambda$ as a $2$-cocycle for the trivial representation, we obtain a fibration:
$$M\times \mathbb{R}\hookrightarrow \mathbb{R}\rtimes_{\Lambda} T^*M \twoheadrightarrow T^*M.$$
The anchor is given $\sharp(g,\alpha)=\Pi^\sharp(\alpha)$ and the bracket by:
$$[(f,\alpha),(g,\beta)]=(\Lie_{\Pi^\sharp(\alpha)}g-\Lie_{\Pi^\sharp(\beta)}f+\Lambda(\alpha,\beta),[\alpha,\beta]_{T^*M}).$$
Then $\partial_2:\pi_2(T^*M)\to M\times \mathbb{R}$ can be seen as an integration map associated to $\Lambda$:
\begin{equation}\label{integral-formula}\partial_2[S]=\int_{\Lambda} S.
 \end{equation}

More precisely, if $S=a_1\d t_1+a_2\d t_2:TI^2\to T^*M$,
 then $\partial_2[S]=\int_{I^2}\langle\Lambda,  a_1\wedge a_2\rangle$. Recall also that, once fixed a leaf $L$,
 one can always work with the subgroup bundle  $\pi_2(T^*L)$, rather than $\pi_2(T^*M)$.

In order to  understand \eqref{integral-formula} when $T^*M$ is integrable, recall that there is a multiplicative $2$-form $\omega_\Lambda$
 induced by $\Lambda$ on  $\G(T^*M)$. Moreover we know that $\pi_2(T^*M,x)$ identifies with $\pi_2(s^{-1}(x))$
 (the usual second fundamental group of the source-fiber above $x$). Then the  map $\partial_2$ comes as an integration
 of $\omega_\Lambda$ along the source fibers by the same argument as in \cite[Lemma 4.4]{CZ}.
 In the case $T^*M$ is not integrable, \eqref{integral-formula} still holds according to  \cite[Theorem 4.5]{Br}.
\end{ep}

\begin{ep}
One can slightly generalize the above situation by considering non-trivial representations:
let us consider a Lie algebroid $A$ and a representation $D$ of $A$ on a vector bundle $(E\to M,D)$ (equivalently,
 a flat $A$-connection on $E$).

Then for any $2$-cocyle $[\Lambda]\in H^2(A,E)$ represented by $\Lambda\in \Omega^2(A,E)$, we build a fibration as follows:
 $$E \hookrightarrow E\rtimes_\Lambda A \twoheadrightarrow A,$$
with anchor  $\sharp(g,\alpha)=\sharp_A(\alpha)$ and bracket:
$[(f,\alpha),(g,\beta)]=(D_{\alpha}g-D_{\beta}f+\Lambda(\alpha,\beta),[\alpha,\beta]_A)$.

In that case, one can still see the boundary map $\partial_2$ as an integration, but we have to use the parallel transport associated to $D$.
 Let $S=TI^2\to A$ be an $2$-sphere based at $x_0$ and covering $\gamma:I^2\to M$. Then according to
 \cite[Theorem 4.5]{Br}
$$\partial_2(S)=\int_{I^2}\Phi_{s}\langle \Lambda,S \rangle\  \d s,$$
where $\Phi_{s}:E_{\gamma(s)}\to E_{\gamma(0)}$ is obtained using the parallel transport along $a\circ \d i_{s_2}:TI\to A$, where $i_{s_2}(s_1):=(s_1,s_2)$ (in fact $s_1$ is varying from $t_1$ to
  $0$, and  $s_2=t_2$ is fixed).

For instance, if $A=TM$, then this means one can integrate spheres along (closed) $2$-forms with values in a flat vector bundle.
 In that case, the construction amounts to pull-back $E$ via $\gamma$, then trivialize $\gamma^*E$ using $D$, and then
 do the integration.
\end{ep}

\begin{ep}
Given a Poisson structure $\Pi\in\Gamma(\Lambda^2 TM)$ and a finite dimensional Lie algebra $\g$, a \emph{Poisson action up to homotopy} is defined \cite{Se}
as an abstract extension $\mathcal{E}$ of Lie algebras:
 $C^\infty(M)\hookrightarrow \mathcal{E}\twoheadrightarrow \mathfrak{g},$ where $C^\infty(M)$ is endowed with the Poisson bracket.
 Moreover, it is required that, for any $e\in\mathcal{E}$, the restriction of $\text{ad}_e$ to ${C^\infty(M)}$ is a derivation
 of the usual product of functions, \emph{i.e.} it is a vector field.

By choosing a splitting, we can identify $\mathcal{E}$ with $C^\infty(M)\oplus \g$, and the bracket necessarily takes the form:
\begin{eqnarray*}
\left[(0,\xi),(0,\xi')\right]_\mathcal{E}&=&(\omega(\xi,\xi'),[\xi,\xi']_\g)\\
\left[(f,0)(g,0)\right]_\mathcal{E}&=&(\{f,g\},0),\\
\left[(f,0)(0,\xi)\right]_\mathcal{E}&=&-\Lie_{D(\xi)}f,
\end{eqnarray*}
for some $\omega\in\Omega^2(\g,C^\infty(M))$ and $D:\g\to \Gamma(T^*M)$ satisfying:
\begin{eqnarray*}
 &\oint_{\xi_1,\xi_2,\xi_3} \Lie_{D(\xi_1)} \omega(\xi_2,\xi_3)-\omega([\xi_1,\xi_2],\xi_3)=0,&\\
 & D_{\left[\xi_1,\xi_2\right]}-\left[D_{\xi_1},D_{\xi_2}\right]=\Pi^\sharp(\d \omega(\xi_1,\xi_2)).
\end{eqnarray*}
Now since $T^*M$ has a natural Lie algebroid structure over $M$, one may try to build a Lie algebroid extension
 $T^*M\hookrightarrow \widehat{\mathcal{E}}\twoheadrightarrow \mathfrak{g}$. For this, we let $\widehat{\mathcal{E}}:=T^*M\oplus\g$ (note however that it is not clear how to pass
 directly from $\mathcal{E}$ to $\widehat{\mathcal{E}}$ without choosing a splitting, though this choice is clearly irrelevant up
 to isomorphism).

 Then  $\widehat{\mathcal{E}}$ is a Lie algebroid over $M$ with anchor
 $\sharp(\alpha,\xi)=\Pi^\sharp(\alpha)+D(\xi)$ and bracket  obtained by differentiating the above relations,
\begin{eqnarray*}
\left[(0,\xi),(0,\xi')\right]_{\widehat{\mathcal{E}}}&=&(\d\omega(\xi,\xi'),\left[\xi,\xi'\right]_\g)\\
\left[(\alpha,0)(\beta,0)\right]_{\widehat{\mathcal{E}}}&=&(\left[\alpha,\beta\right]_{T^*M},0),\\
\left[(\alpha,0)(0,\xi)\right]_{\widehat{\mathcal{E}}}&=&-\Lie_{D(\xi)}\alpha,
\end{eqnarray*}
for any $\alpha,\beta\in\Gamma(T^*M)$, $\xi_i\in\g$. It is a fibration if and only if $D(\xi)$ is a complete vector field
 $\forall\xi\in \g$ (for a good choice of a splitting). 

More generally, one may consider an abstract extension $A\hookrightarrow \widehat{\mathcal{E}}\twoheadrightarrow \mathfrak{g}.$
 In that case, there is a representation up to homotopy of $\g$ on the complex $\Omega^k(A)\xrightarrow{d_A}\Omega^{k+1}(A)$
 of $k$-forms on $A$ (this follows from  \cite[Section 3]{Br}).

\end{ep}

\section{Proof of the main theorem}
\subsection{Preliminaries}

The following proposition tells us how to build Lie algebroid morphisms out of time dependent sections:

\begin{prop}\label{prop:morphisms:sections}
Let $\alpha^t_1,\dots,\alpha^t_n$ be a family of sections of $A$, depending smoothly on a multi-parameter $t=(t_1,\dots ,t_n)\in I^n$
 and satisfying the following conditions:
\begin{equation}\label{commuting:t:sections}
[\alpha_i,\alpha_j]=\frac{\d \alpha_i}{\d t_j}-\frac{\d \alpha_j}{\d t_i},\quad\quad(i,j=1,\dots,n).
\end{equation}
Denote $X_i(x,t):=\sharp(\alpha_i^t(x))+\partial t_i$, and assume that $X_i$ is a complete vector field on $M\times I^n$
 for any $i\in\{1,\dots,n\}$. Then for any point $x_0$ in $M$, there exists $\gamma:I^n\to M$ satisfying:
\begin{eqnarray*}
 \frac{\d \gamma}{\d t_i}(t)&=&X^t_i(\gamma(t)),\\
 \gamma(0)&=&x_0.
\end{eqnarray*}
Moreover, $h:=\sum_{i=1}^n a_i \d t_i:TI^n\to A$ defines a Lie algebroid morphism, where:
 $$a_i(t):=\alpha_i^t(\gamma(t)).$$
\end{prop}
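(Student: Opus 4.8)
The plan is to split the statement into two independent pieces: first the existence of the base map $\gamma$, and then the verification that $h=\sum_i a_i\,\d t_i$ is a Lie algebroid morphism. For the first piece I would integrate the $X_i$ simultaneously, and the crucial observation is that the hypotheses force the $X_i$ to commute pairwise on $M\times I^n$. Decomposing $X_i=\sharp(\alpha_i)+\partial t_i$ into its vertical part (the $t$-dependent vector field $\sharp(\alpha_i^t)$ on $M$) and the coordinate field $\partial t_i$, one has
\[
[X_i,X_j]=[\sharp(\alpha_i),\sharp(\alpha_j)]+[\sharp(\alpha_i),\partial t_j]+[\partial t_i,\sharp(\alpha_j)].
\]
Since $\sharp$ is a Lie algebroid morphism the first bracket equals $\sharp([\alpha_i,\alpha_j])$ (at fixed $t$), while $[\sharp(\alpha_i),\partial t_j]=-\sharp(\partial_{t_j}\alpha_i)$ and $[\partial t_i,\sharp(\alpha_j)]=\sharp(\partial_{t_i}\alpha_j)$. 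Hence $[X_i,X_j]=\sharp\bigl([\alpha_i,\alpha_j]-\partial_{t_j}\alpha_i+\partial_{t_i}\alpha_j\bigr)$, which vanishes precisely by the structure equation \eqref{commuting:t:sections}.

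Given that the $X_i$ are complete and commute, their flows $\phi^i_s$ commute, so I would set $\gamma(t):=\pr_M\bigl(\phi^n_{t_n}\circ\cdots\circ\phi^1_{t_1}(x_0,0)\bigr)$. Each $X_i$ carries the coordinate component $\partial t_i$ and no other $\partial t_j$, so flowing for time $t_i$ along $X_i$ advances the $t_i$-coordinate by $t_i$ and leaves the others fixed; consequently $\phi^n_{t_n}\circ\cdots\circ\phi^1_{t_1}(x_0,0)=(\gamma(t),t)$, and the defining ODE $\partial_{t_i}\gamma=\sharp(\alpha_i^t)(\gamma(t))=X_i^t(\gamma(t))$ together with $\gamma(0)=x_0$ follows at once. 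Commutativity guarantees that $\gamma$ is independent of the chosen ordering and smooth in $t$.

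For the second piece I would use the characterization that a bundle map $h:TI^n\to A$ over $\gamma$ is a morphism if and only if the pullback intertwines the differentials, $h^*\circ d_A=\d\circ h^*$; since $\Omega^\bullet(A)$ is generated locally as an algebra by functions and sections of $A^*$, and both $h^*$ (an algebra homomorphism) and the differentials (derivations) respect products, it suffices to check the identity on degrees $0$ and $1$. On a function $g\in C^\infty(M)$ the identity reduces to $\sharp(a_i)=\partial_{t_i}\gamma$, which holds by construction of $\gamma$. On a one-form $\xi\in\Gamma(A^*)$ one has $h^*\xi=\sum_i\langle\xi,a_i\rangle\,\d t_i$, so that $\d(h^*\xi)(\partial t_i,\partial t_j)=\partial_{t_i}\langle\xi,a_j\rangle-\partial_{t_j}\langle\xi,a_i\rangle$; expanding this by the chain rule, using $a_k(t)=\alpha_k^t(\gamma(t))$ and $\partial_{t_i}\gamma=\sharp(\alpha_i)|_{\gamma(t)}$, gives
\[
\langle\xi,\partial_{t_i}\alpha_j-\partial_{t_j}\alpha_i\rangle+\sharp(\alpha_i)\langle\xi,\alpha_j\rangle-\sharp(\alpha_j)\langle\xi,\alpha_i\rangle,
\]
everything evaluated at $\gamma(t)$. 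Substituting $\partial_{t_i}\alpha_j-\partial_{t_j}\alpha_i=-[\alpha_i,\alpha_j]$ from \eqref{commuting:t:sections} turns this into the Cartan expression $(d_A\xi)(\alpha_i,\alpha_j)|_{\gamma(t)}=h^*(d_A\xi)(\partial t_i,\partial t_j)$, as required.

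The step I expect to require the most care is the construction of $\gamma$: because $I^n$ has boundary, ``completeness of $X_i$ on $M\times I^n$'' must be read as the guarantee that the integral curves survive for the whole parameter range (no escape to infinity in the $M$-direction before $t$ leaves $[0,1]^n$), which is exactly what is needed for the iterated flow to be defined on all of $I^n$; one may make this rigorous by extending the $\alpha_i^t$ smoothly to a neighborhood of $I^n$ in $\R^n$. Once $\gamma$ is in hand the morphism check is a direct computation whose only nontrivial input is the vanishing supplied by \eqref{commuting:t:sections} — the very same identity that made the $X_i$ commute — so both halves of the proof are driven by one equation.
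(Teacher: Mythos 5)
Your proposal is correct and follows essentially the same route as the paper: commutativity of the $X_i$ on $M\times I^n$ is extracted from the anchor of \eqref{commuting:t:sections} and yields $\gamma$ via iterated (commuting, complete) flows, and the morphism condition $h^*\circ d_A=\d\circ h^*$ is reduced by the Leibniz rule to degrees $0$ and $1$, where the structure equation does the work. The only difference is cosmetic: you carry out the degree-$1$ check invariantly with an arbitrary $\xi\in\Gamma(A^*)$ and the Cartan formula, whereas the paper chooses a local basis $e_1,\dots,e_k$ and verifies the identity on the dual basis via the structure functions $c^{\,l}_{p,q}$.
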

\begin{proof}
Let us first argue the existence of $\gamma$. Applying the anchor map to \eqref{commuting:t:sections}, we easily see that
 $X_i+\partial t_i$ for $i\in\{1,\dots,n\}$ mutually commute as vector fields on $M\times I^n$.
 By writing down the commutation relations of their respective flows on $M\times I^n$, one gets the following relations on the
 corresponding \emph{time-dependent} flows $\Phi_{s_i,s_i'}^{X_i}$ on $M$ (see  \cite[Prop. A.1]{Br} for more details when $n=2$):
$$\Phi_{s'_i,s_i}^{X_i^{(t_1,\dots,\hat{t}_i,\dots, s'_j,\dots,t_n)}}\circ\Phi_{s'_j,s_j}^{X_j^{(t_1,\dots,s_i,\dots,\hat{t}_j,\dots,t_n)}}=\Phi_{s'_j,s_j}^{X_j^{(t_1,\dots,s'_i,\dots,\hat{t}_j,\dots,t_n )}} \circ \Phi_{s'_i,s_i}^{X_i^{(t_1,\dots,\hat{t}_i,\dots,s_j,\dots,t_n)}}$$

Here, note that $X_i$ is a family of $t_i$-time dependent vector fields, \emph{depending on the parameters}
 $(t_1,\dots,\hat{t}_i,\dots,t_n)$, thus its time-dependent flow $\phi_{s_i,s_i'}^{X_i}$ also depends on
 $(t_1,\dots,\hat{t}_i,\dots, t_n)$. Using the above relations, it is easily checked by induction that the following formula
 provides a good candidate:
$$\gamma(t_1,\dots, t_n):=\Phi_{t_n,0}^{X_n^{(t_1,\dots, t_{n-1},\hat{t}_n)}}\circ \Phi_{t_{n-1},0}^{X_{n-1}^{(t_1,\dots, t_{n-2},\hat{t}_{n-1},0)}}\circ\dots\circ\Phi_{t_1,0}^{X_1^{(\hat{t}_1,0,\dots, 0)}}(x_0).$$

To show the second statement, recall that since $\d_A$ satisfies the Leibniz rule on $(\Omega^\bullet(A),\wedge)$,
 the condition for $h$ to define a Lie algebroid morphism, $\d_I\circ h^*=h^*\circ \d_A$, needs only to be checked on
 $\Omega^0(A)$ and $\Omega^1(A)$.

Moreover, it is a local condition, so we can choose a basis $e_1,\dots, e_k$ of sections of $A$, defined on a neighborhood of
 $\gamma(t_0)$ and work in local coordinates. We denote $e_1^*,\dots, e_k^*$ the dual basis, and  $c_{p,q}^l\in C^\infty(U)$
 the structure functions of $A$, they are defined by:
$$[e_p,e_q]=\sum_{l=1}^n c_{p,q}^{\, l} e_l,\quad\quad(p,q,l=1,\dots,k)$$ Locally, one can write
 $\alpha_i^t=\sum_p \alpha_{i,p}^t e_p$ so that $h=\sum a_{i,p}e_p\otimes \d t_i$ where
 $a_{i,p}=\alpha_{i,p}\circ \gamma$ $(i=1,\dots,n,p=1,\dots, k)$.

First, we compute that, for any $f\in \Omega^0(A)= C^\infty(M)$ and $i\in \{1,\dots, n\}$:
\begin{eqnarray*}
\langle h^*\circ \d_A(f),\partial t_i\rangle(t)&=&\langle \d f_{\gamma(t)},\sharp\circ h(\partial t_i)\rangle\\
                                               &=&\langle \d f,X_i^t(\gamma(t))\rangle\\
                                               &=&\frac{\d}{\d t_i}(f\circ\gamma)(t)\\
                                               &=&\langle \d h^*(f), \partial t_i \rangle(t).
\end{eqnarray*}
Thus, $h^*$ commutes with the differentials on $\Omega^0(A)$.

Then we express \eqref{commuting:t:sections} in local coordinates. We get that, for any $i,j\in\{1,\dots, n\}$ and $l\in\{1,\dots, k\}:$ 
\begin{eqnarray*}
\Lie_{X_i}\alpha_{j,l}-\Lie_{X_j}\alpha_{i,l}
+\frac{1}{2}\sum_{p,q=1}^n (c_{p,q}^{\, l}(\alpha_{i,p}\alpha_{j,q}-\alpha_{i,q}\alpha_{j,p})
         = \frac{\d \alpha_{i,l}}{\d t_j}-\frac{\d \alpha_{j,l}}{\d t_i}.
\end{eqnarray*}
By evaluating these equalities at the point $\gamma(t)$ and using the fact that $X_i^t(\gamma(t))=\frac{\d \gamma}{\d t_i}(t)$, we obtain the following equations:
\begin{eqnarray*}
\frac{1}{2}\sum_{p,q=1,\dots, k} c_{p,q}^{\, l} (a_{i,p} a_{j,q}-a_{i,q} a_{j,p})=\frac{\d a_{i,l}}{\d t_j}-\frac{\d a_{j,l}}{\d t_i}
\end{eqnarray*}
We can now check that:
\begin{eqnarray*}
\langle h^*\circ \d_A(e_l^*),\partial t_i\wedge \partial t_j\rangle
                 &=&\langle h^*(\sum_{p,q=1}^n c_{p,q}^{\, l} e_p^*\wedge e_q^*) , \partial t_i\wedge \partial t_j\rangle\\
                 &=&\langle \sum_{p,q=1}^n c_{p,q}^{\, l} e_p^*\wedge e_q^*, h(\partial t_i)\wedge h(\partial t_j)\rangle\\
                 &=&\frac{1}{2}\sum_{p,q=1,\dots, n} c_{i,j}^{\, l} (a_{i,p} a_{j,q}-a_{i,q} a_{j,p})\\
                 &=&\frac{\d a_{i,l}}{\d t_j}-\frac{\d a_{j,l}}{\d t_i}\\
                 &=&\langle \d \circ h^*(e_l^*),\partial t_i\wedge\partial t_j \rangle,\\
\end{eqnarray*}
which holds for any $l\in\{1,\dots, k\}$ and $i,j\in\{1,\dots, n\}$, so we conclude that $h^*$ commutes with differentials on $\Omega^1(A)$ as well.\end{proof}

Conversely, we would like to make sure that any morphism $a:TI^n\to A$ can be obtained as in
 Prop. \ref{prop:morphisms:sections}, for this we need to prove first:

\begin{prop} \label{prop:brackets}
 Let $\alpha^0_1,\dots,\alpha^0_n$ be a family of sections of $A$, depending smoothly on a multi-parameter
 $t=(t_1,\dots, t_n)\in I^n$ and satisfying the following conditions:
\begin{equation} 
[\alpha^0_i,\alpha^0_j]=\frac{\d \alpha^0_i}{\d t_j}-\frac{\d \alpha^0_j}{\d t_i},\quad\quad(i,j=1,\dots, n).
\end{equation}
Suppose that we are given a family of sections $\alpha_{n+1}$, depending on an extra parameter $(t_1,\dots,t_n,t_{n+1})$.

Then the unique solutions of the equations:
\begin{eqnarray}
 \label{prop:k:n+1} \frac{\d\alpha_k}{\d t_{n+1}}-\frac{\d\alpha_{n+1}}{\d t_k}&=&[\alpha_k,\alpha_{n+1}],\\
 \label{prop:k:n+1:ix} \alpha_k|_{\{t_{n+1}=0\}}&=&\alpha_k^0.
\end{eqnarray}
satisfy also the commuting relations:
\begin{equation}\label{prop:k:l}\frac{\d\alpha_k}{\d t_l}-\frac{\d\alpha_l}{\d t_k}=[\alpha_k,\alpha_l],
 \quad\quad (k,l=1,\dots, n+1)
\end{equation}
\end{prop}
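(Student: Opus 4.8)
The plan is to read the desired commuting relations \eqref{prop:k:l} as a system of constraints and show they are \emph{preserved} by the evolution in $t_{n+1}$ defined by \eqref{prop:k:n+1}. First I note that \eqref{prop:k:l} with $l=n+1$ (or $k=n+1$) is literally the defining equation \eqref{prop:k:n+1}, and the case $k=l$ is trivial; so those relations hold by construction and it suffices to prove \eqref{prop:k:l} for indices $k,l\in\{1,\dots,n\}$. To this end I introduce the defect
\[ F_{kl}:=\frac{\d\alpha_k}{\d t_l}-\frac{\d\alpha_l}{\d t_k}-[\alpha_k,\alpha_l],\qquad k,l\le n, \]
a smooth $t$-dependent section of $A$, whose vanishing is exactly what we want. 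Since $\alpha_k|_{\{t_{n+1}=0\}}=\alpha_k^0$ by \eqref{prop:k:n+1:ix}, the partial derivatives in $t_1,\dots,t_n$ also agree at $t_{n+1}=0$, so $F_{kl}|_{\{t_{n+1}=0\}}$ equals the left-hand side of the hypothesis on the $\alpha_k^0$ and therefore vanishes.

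The heart of the argument is to show that $F_{kl}$ obeys a linear homogeneous evolution equation in $t_{n+1}$. I differentiate $F_{kl}$ with respect to $t_{n+1}$, commute $\tfrac{\d}{\d t_{n+1}}$ past $\tfrac{\d}{\d t_k}$ and $\tfrac{\d}{\d t_l}$ using equality of mixed partials, and substitute $\tfrac{\d\alpha_k}{\d t_{n+1}}=\tfrac{\d\alpha_{n+1}}{\d t_k}+[\alpha_k,\alpha_{n+1}]$ from \eqref{prop:k:n+1}. The pure second-derivative terms in $\alpha_{n+1}$ cancel by symmetry, and after expanding the brackets via the Leibniz rule \eqref{eq:rho-bracket} a careful bookkeeping shows that all the first-order terms cancel in pairs, leaving
\[ \frac{\d F_{kl}}{\d t_{n+1}}=[F_{kl},\alpha_{n+1}]+J, \]
where $J$ collects the three double-bracket terms
\[ J=[[\alpha_k,\alpha_l],\alpha_{n+1}]-[[\alpha_k,\alpha_{n+1}],\alpha_l]-[\alpha_k,[\alpha_l,\alpha_{n+1}]]. \]
The Jacobi identity \eqref{eq:rho-bracket2} forces $J=0$, so $F_{kl}$ satisfies the genuinely linear equation $\tfrac{\d F_{kl}}{\d t_{n+1}}=[F_{kl},\alpha_{n+1}]$.

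Finally I invoke uniqueness. The operator $Y\mapsto[Y,\alpha_{n+1}]$ is $\R$-linear and, by \eqref{eq:rho-bracket}, a first-order differential operator in $Y$ whose principal symbol is the vector field $\sharp(\alpha_{n+1})$; thus, in a local frame of $A$, the above becomes a linear transport system whose characteristics are the $t_{n+1}$-flow lines of $\sharp(\alpha_{n+1})$. Along each characteristic issuing from $t_{n+1}=0$, the components of $F_{kl}$ satisfy an honest linear ODE with zero initial datum, hence vanish; since every point of the domain lies on such a characteristic, $F_{kl}\equiv 0$, which is \eqref{prop:k:l} for $k,l\le n$ and completes the proof. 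The main obstacle I anticipate is the bracket bookkeeping in the middle step — keeping precise track of the derivation terms produced by \eqref{eq:rho-bracket} and isolating exactly the Jacobi combination $J$ — together with phrasing the concluding uniqueness correctly, since the Lie algebroid bracket turns the evolution into a linear transport equation rather than a pointwise ODE.
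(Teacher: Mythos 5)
Your proposal is correct and follows essentially the same route as the paper: the paper also forms the defect $\phi_{k,l}=\frac{\d\alpha_k}{\d t_l}-\frac{\d\alpha_l}{\d t_k}-[\alpha_k,\alpha_l]$, shows via the Jacobi identity that it satisfies a linear homogeneous evolution equation in $t_{n+1}$ driven by $\alpha_{n+1}$, and concludes from uniqueness of solutions with vanishing initial condition. You merely carry out explicitly the ``short computation'' and the transport-equation uniqueness argument that the paper leaves implicit.
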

\begin{proof} 
When $k\in\{1,\dots, n\}$ and $l\in\{n+1\}$, \eqref{prop:k:l} is just \eqref{prop:k:n+1}. For $k,l\in\{1,\dots, n\}$ one may
 consider the following expression:
$$\phi_{k,l}:=\frac{\d\alpha_k}{\d t_l}-\frac{\d\alpha_l}{\d t_k}-[\alpha_k,\alpha_l], \quad\quad (k,l=1,\dots, n). $$
A short computation shows that $\phi_{k,l}$ satisfies the equation:
$$\frac{\d \phi_{k,l}}{\d t_{n+1}}=[\alpha_{n+1},\phi_{k,l}].$$
The solution of such an equation is unique once given an initial condition $\phi_{k,l}|_{\{t_{n+1}=0\}}$.
 Thus if $\phi_{k,l}|_{\{t_{n+1}=0\}}$, then $\phi_{k,l}$ necessarily vanishes for any $t_{n+1}$. In other words, the relations
 \eqref{prop:k:l} for $k,l\in\{1,\dots, n\}$ only need to be checked for $t_{n+1}=0$, \emph{i.e.} for the $\alpha_k^0$'s.
\end{proof}

\begin{cor}\label{extend:morphisms}
 Let  $\sum a_i\d t_i:TI^n\to A $ be a Lie algebroid morphism. Then there exists a family of sections  $\alpha_1,\dots,\alpha_n$ such that:
\begin{eqnarray}
 \label{extend:morphisms:1} \frac{\d\alpha_k}{\d t_{l}}-\frac{\d\alpha_{l}}{\d t_k}&=&[\alpha_k,\alpha_{l}],\\
 \label{extend:morphism:2}  a_k&=&\alpha_k\circ \gamma
\end{eqnarray}
\end{cor}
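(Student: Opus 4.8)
The plan is to prove the statement by induction on $n$, letting Proposition \ref{prop:brackets} supply the commuting relations \eqref{extend:morphisms:1} for free and reducing the matching condition \eqref{extend:morphism:2} to a uniqueness argument for a first-order ODE along the base curve $\gamma:I^n\to M$ underlying $a$. I will use two standing facts about the morphism $a=\sum a_k\,\d t_k$: since $a$ covers $\gamma$ and the anchor of $TI^n$ is the identity, the anchor compatibility $\sharp(a_k(t))=\frac{\d\gamma}{\d t_k}(t)$ holds; and, as computed in the proof of Proposition \ref{prop:morphisms:sections}, the condition that $a$ be a morphism is exactly $\frac{\d a_k}{\d t_l}-\frac{\d a_l}{\d t_k}=[a_k,a_l]$ read along $\gamma$. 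The base case $n=1$ carries no relation to verify, so it suffices to extend the curve $t\mapsto a_1(t)\in A_{\gamma(t)}$ to a smooth family of sections $\alpha_1^t$ with $\alpha_1^t(\gamma(t))=a_1(t)$; locally one writes $\alpha_1^t=\sum_p a_{1,p}(t)e_p$ in a frame and patches by a partition of unity.

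For the inductive step, assume the result in dimension $n-1$. First I restrict $a$ to the face $\{t_n=0\}$, obtaining a morphism $TI^{n-1}\to A$ covering $\gamma|_{\{t_n=0\}}$; the inductive hypothesis yields sections $\alpha_1^0,\dots,\alpha_{n-1}^0$ of $(t_1,\dots,t_{n-1})$ satisfying the commuting relations among themselves and $a_k|_{\{t_n=0\}}=\alpha_k^0\circ(\gamma|_{\{t_n=0\}})$. Next, exactly as in the base case, I choose a smooth family of sections $\alpha_n$ of $(t_1,\dots,t_n)$ with $\alpha_n(\gamma(t))=a_n(t)$. Now I apply Proposition \ref{prop:brackets}, with its ``$n$'' equal to $n-1$ and its extra section $\alpha_{n+1}$ equal to $\alpha_n$: solving
$$\frac{\d\alpha_k}{\d t_n}-\frac{\d\alpha_n}{\d t_k}=[\alpha_k,\alpha_n],\qquad \alpha_k|_{\{t_n=0\}}=\alpha_k^0\qquad(k=1,\dots,n-1)$$
produces $\alpha_1,\dots,\alpha_{n-1}$ for which the full set of commuting relations \eqref{extend:morphisms:1} holds automatically. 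It remains only to check \eqref{extend:morphism:2}.

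The crux is to show $\alpha_k\circ\gamma=a_k$ for $k=1,\dots,n-1$ (the case $k=n$ is true by the choice of $\alpha_n$). Both curves agree at $t_n=0$, so by uniqueness of solutions of ODEs it is enough to exhibit a single first-order equation $\frac{\d y}{\d t_n}=F(t,y)$, with $(t_1,\dots,t_{n-1})$ as parameters and $y\in A_{\gamma(t)}$, that both $a_k$ and $\alpha_k\circ\gamma$ satisfy. For $a_k$ this comes from the morphism condition $\frac{\d a_k}{\d t_n}-\frac{\d a_n}{\d t_k}=[a_k,a_n]$ along $\gamma$, after expanding $\frac{\d a_n}{\d t_k}$ by the chain rule and using $\frac{\d\gamma}{\d t_k}=\sharp(a_k)$ to turn the base-movement of $\alpha_n$ into an anchor term linear in the value $a_k$. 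For $\alpha_k\circ\gamma$ it comes from differentiating along $t_n$: the chain rule gives $\bigl(\frac{\d\alpha_k}{\d t_n}\bigr)\circ\gamma$ plus the base-movement of $\alpha_k$ along $\frac{\d\gamma}{\d t_n}=\sharp(a_n)=\sharp(\alpha_n\circ\gamma)$; substituting the defining equation for $\frac{\d\alpha_k}{\d t_n}$ and expanding $[\alpha_k,\alpha_n]$ into its structure-constant part and its two anchor-derivative terms, this base-movement cancels exactly against the term $-(\sharp\alpha_n)(\alpha_k)$. The two resulting right-hand sides are then visibly the same function $F(t,\cdot)$, so $a_k$ and $\alpha_k\circ\gamma$ solve the same Cauchy problem and coincide, completing the induction. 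I expect this matching of the chain-rule base terms against the anchor part of the Lie bracket to be the only delicate point; everything else is inductive bookkeeping together with direct appeals to Propositions \ref{prop:morphisms:sections} and \ref{prop:brackets}.
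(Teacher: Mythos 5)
Your proposal is correct and follows essentially the same route as the paper: induction on $n$, with Proposition \ref{prop:brackets} supplying the commutation relations and a uniqueness-of-ODE argument along $t_n$ giving $\alpha_k\circ\gamma=a_k$. The only difference is cosmetic — the paper delegates the key cancellation (that $a_k$ and $\alpha_k\circ\gamma$ solve the same Cauchy problem, the chain-rule term along $\sharp a_n$ cancelling the $-(\sharp\alpha_n)(\alpha_k)$ part of the bracket) to the $n=2$ case of Crainic--Fernandes, whereas you spell it out in local coordinates.
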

\begin{proof}
The case $n=1$ is obvious, and \eqref{extend:morphisms:1} easily
follows from Proposition \ref{prop:brackets} by induction. The second statement \eqref{extend:morphism:2} was proved for the case
 $n=2$ in \cite[Prop. 1.3]{cf} by choosing an $A$-connection and arguing by uniqueness. But it is easy to apply the case $n=2$
 in each step of the induction (see below).

Let us now explicit the induction process when $n=3$:
\begin{enumerate}[i)]
 \item  we start with a Lie algebroid morphism $a_1\d t+a_2 \d t_2+a_3\d t_3:TI^3\to A$
 \item then we extend $a_1|_{\{t_2,t_3=0\}}$ into a time dependent section $\alpha_1$, up to now $\alpha_1$ only depends on $t_1$; in fact we are just fixing $\alpha_1|_{\{t_2,t_3=0\}}$.
 \item we extend arbitrarily $a_2|_{\{t_3=0\}}$ into a time-dependent section $\alpha_2$  (depending on $t_1,t_2$) and let $\alpha_1$ be the solution of the equation:
$$\frac{\d\alpha_1}{\d t_2}-\frac{\d \alpha_2}{\d t_1}=[\alpha_1,\alpha_2],$$
and initial conditions  $\alpha_1|_{\{t_2,t_3=0\}}$ given by step $ii)$. We get $\alpha_1$ depending on $t_1,t_2$. By \cite{cr}, we have necessarily $\alpha_1\circ\gamma|_{\{t_3=0\}}=a_1|_{\{t_3=0\}}$.
\item we extend $\alpha_3$ arbitrarily into a section depending on $t_1,t_2,t_3$ and consider the solutions:
$$\frac{\d\alpha_1}{\d t_3}-\frac{\d \alpha_3}{\d t_1}=[\alpha_1,\alpha_3],$$
$$\frac{\d\alpha_2}{\d t_3}-\frac{\d \alpha_3}{\d t_2}=[\alpha_2,\alpha_3],$$
with initial conditions on $\{t_3=0\}$ given by the last step. In order to make sure that $\alpha_1\circ \gamma=a_1$,
 we look at $a_1\d t_1+a_3\d t_3:TI^2\to A$, with $t_2$ as a fixed parameter. Similarly, we get $\alpha_2\circ\gamma=a_2$.
\end{enumerate}\end{proof}

\begin{remark}\label{rem:extend:spheres}
One observes from the previous reasonning that a Lie algebroid morphism $TI^n\to A$ is
entirely determined by (say) its restriction  to one face $d_{n,0}^n a$ and the values
$a_n(t)$ for $t\in I^n$.

Also, in the case one wants to extend an $A$-sphere $\sum a_k\d t_k:TI^n\to A$, it is not necessary to proceed by induction:
 one may extend first the last component $a_n$ into a time-dependent section $\alpha_n$, and then apply directly the
 Prop. \ref{prop:brackets} (with $n-1$ terms) and choose $\alpha_k|_{\{t_n=0\}}$, $k<n$ to all vanish. 
\end{remark}

\begin{lemma}\label{lemma:path-morphism}
 Let $a^{t_{n+1}}=\sum_{k=1}^n a(t_1,\dots,t_n,t_{n+1})\d t_k:TI^{n}\to A$ be a smooth family (parametrized by $t_{n+1}\in[0,1]$)
 of Lie algebroid morphisms that coincide on one face for all $t_{n+1}\in I$.

Then there exists a unique $a_{n+1}:I^{n+1}\to A$ such
 that $\sum_{i=1}^{n+1} a_k \d t_k:TI^{n+1}\to A$ is a Lie algebroid morphism.
\end{lemma}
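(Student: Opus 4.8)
The plan is to pass from morphisms to the time-dependent sections of Corollary~\ref{extend:morphisms}, reduce the existence of $a_{n+1}$ to solving the ``missing'' commuting relations, and then invoke Proposition~\ref{prop:brackets} with the roles of the coordinates permuted. Concretely, for each fixed $t_{n+1}$ the morphism $a^{t_{n+1}}$ is, by Corollary~\ref{extend:morphisms}, realized as $a_k=\alpha_k\circ\gamma$ for sections $\alpha_1,\dots,\alpha_n$ (now depending smoothly on all of $t_1,\dots,t_n,t_{n+1}$) satisfying $\frac{\d\alpha_k}{\d t_l}-\frac{\d\alpha_l}{\d t_k}=[\alpha_k,\alpha_l]$ for $k,l\le n$. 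By the mechanism behind Proposition~\ref{prop:morphisms:sections}, producing the morphism $\sum_{k=1}^{n+1}a_k\,\d t_k$ amounts to producing one further section $\alpha_{n+1}$ so that the enlarged family $\alpha_1,\dots,\alpha_{n+1}$ satisfies \eqref{commuting:t:sections}; the only genuinely new equations are
\begin{equation*}
\frac{\d\alpha_k}{\d t_{n+1}}-\frac{\d\alpha_{n+1}}{\d t_k}=[\alpha_k,\alpha_{n+1}],\qquad k=1,\dots,n,
\end{equation*}
after which one sets $a_{n+1}:=\alpha_{n+1}\circ\gamma$.

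To solve this system I would first relabel $t_1,\dots,t_n$ (reversing one coordinate if needed) so that the common face is $\{t_1=0\}$, and arrange the extensions $\alpha_k$ so that their restrictions to $\{t_1=0\}$ are \emph{independent of $t_{n+1}$}; this is possible precisely because the family $a^{t_{n+1}}$ is constant on that face, and it is the delicate point of the construction. Then the face morphism extends degenerately in $t_{n+1}$, so the natural Cauchy datum is $\alpha_{n+1}|_{\{t_1=0\}}=0$, and with this choice the displayed relations hold on $\{t_1=0\}$ for $k\ge 2$ (both sides vanish there, using $\frac{\d\alpha_k}{\d t_{n+1}}|_{\{t_1=0\}}=0$). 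The construction of $\alpha_{n+1}$ together with the verification of \emph{all} the relations is now a single application of Proposition~\ref{prop:brackets}, with the distinguished direction taken to be $t_1$ instead of $t_{n+1}$: one regards $(t_2,\dots,t_n,t_{n+1})$ as the ``other'' directions, $\alpha_1$ as the given section in the distinguished direction, and the given $\alpha_2,\dots,\alpha_n$ together with $\alpha_{n+1}|_{\{t_1=0\}}=0$ as the initial data. The cross-relations solved by that proposition are exactly $\frac{\d\alpha_k}{\d t_1}-\frac{\d\alpha_1}{\d t_k}=[\alpha_k,\alpha_1]$, which the given $\alpha_2,\dots,\alpha_n$ already satisfy (so uniqueness reproduces them) and which \emph{define} $\alpha_{n+1}$ from its datum on the face; its conclusion \eqref{prop:k:l} then yields all commuting relations, in particular the displayed system.

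It remains to check compatibility with the base map, i.e. $\sharp a_{n+1}=\frac{\d\gamma}{\d t_{n+1}}$. Applying the anchor to the relation for $k=1$ and setting $W:=\frac{\d\gamma}{\d t_{n+1}}-\sharp\alpha_{n+1}(\gamma)$, a short computation (using $\frac{\d\gamma}{\d t_1}=\sharp\alpha_1(\gamma)$ and that the anchor preserves brackets) gives a linear equation for $W$ along $t_1$ with $W|_{\{t_1=0\}}=0$, hence $W\equiv 0$. The local computation in the proof of Proposition~\ref{prop:morphisms:sections} then shows that $\sum_{k=1}^{n+1}a_k\,\d t_k$ is a Lie algebroid morphism, without needing completeness since $\gamma$ is already given. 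Uniqueness of $a_{n+1}$ follows from the same ODE uniqueness: the value on $\{t_1=0\}$ is forced to be $0$, and the difference of two solutions satisfies a homogeneous linear equation along $t_1$ with zero initial condition.

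The main obstacle I expect is the \emph{compatibility of the section-level construction with the face hypothesis}: one must extend the morphisms $a^{t_{n+1}}$ to sections whose restriction to the common face is genuinely $t_{n+1}$-independent, so that the hypotheses of Proposition~\ref{prop:brackets} (the commuting relations on $\{t_1=0\}$ involving the new section) are actually met. This rests on the freedom in Corollary~\ref{extend:morphisms} and Remark~\ref{rem:extend:spheres}, and on checking that all choices can be made smoothly in the parameter $t_{n+1}$. The remaining technical ingredient, the well-posedness of the ``linear equations in sections'' invoked above, is of the transport type already handled in the proof of Proposition~\ref{prop:brackets} (integration along the characteristics of $\sharp\alpha_1+\partial t_1$), and the Jacobi-identity bookkeeping is precisely that of its proof.
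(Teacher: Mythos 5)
Your existence argument is, up to a relabeling of the coordinates, the paper's own proof: both reduce the problem to extending the family into time-dependent sections whose restrictions to the common face are independent of $t_{n+1}$, then define $\alpha_{n+1}$ by the transport equation along the direction transverse to that face with zero initial value, and obtain the remaining commuting relations \eqref{prop:k:l} from uniqueness of solutions of the induced linear equations (your ``Proposition~\ref{prop:brackets} with the distinguished direction permuted'' is exactly the paper's computation with $\phi_{n+1,k}$). The one place where you compress what the paper spells out is precisely the ``delicate point'' you flag: the paper achieves the $t_{n+1}$-independence of the $\alpha_k$ on the common face by an explicit induction, choosing each free extension $t_{n+1}$-independent and propagating $\frac{\d\alpha_k}{\d t_{n+1}}=0$ through each transport step; that induction is the honest content behind your appeal to ``the freedom in Corollary~\ref{extend:morphisms}''. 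Your extra verification that the given base map satisfies $\frac{\d\gamma}{\d t_{n+1}}=\sharp\,\alpha_{n+1}(\gamma)$ addresses a point the paper glosses over, and is welcome.

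The genuine gap is in uniqueness. You assert that ``the value on $\{t_1=0\}$ is forced to be $0$'', but the morphism condition only forces $\sharp\, a_{n+1}|_{\{t_1=0\}}=0$ (because the base map restricted to the common face is $t_{n+1}$-independent); it does not force $a_{n+1}|_{\{t_1=0\}}=0$ unless the anchor is injective there. Indeed, uniqueness as literally stated already fails for $A=\g$ a Lie algebra and $n=1$: the morphism condition is the single equation $\frac{\d a_1}{\d t_2}-\frac{\d a_2}{\d t_1}=[a_1,a_2]$, an ODE for $a_2$ along $t_1$ whose solution depends on the arbitrary initial value $a_2(0,t_2)$. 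So either one normalizes $a_{n+1}$ to vanish on the common face (in which case your ODE argument does give uniqueness within that class), or the uniqueness claim must be weakened. The paper offers no competing argument here --- its proof ends by leaving uniqueness to the reader --- but your justification as written is not correct.
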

\begin{proof}
 Say $a^{t_{n+1}}$ coincide on the face $\{t_n=0\}$, that is $a_k|_{\{t_n=0\}}$ is independent of $t_{n+1}$, $k=1,\dots,n$.
 We apply the procedure of Prop. \ref{extend:morphisms} with $n+1$ parameters and some careful choices:
\begin{itemize}
 \item first we extend $a_1|_{\{t_k=0, k=2,\dots,n\}}$ into a time dependent section $\alpha_1:I^2\to \Gamma(A)$ satisfying
  $\frac{\d \alpha_1}{\d t_{n+1}}=0$ for all $t_1,t_{n+1}\in I$;
 \item then we choose  $\alpha_2:I^2\to \Gamma(A)$ extending $a_2|_{\{t_k=0,k=3\dots n\}}$ and satisfying $\frac{\d \alpha_2}{\d t_{n+1}}=0$,
 $(\forall t_1,t_2,t_{n+1})\in I$. Then it is easily checked that the solution $\alpha_1$ of the equation:
$$\frac{\d\alpha_1}{\d t_2}-\frac{\d \alpha_2}{\d t_1}=[\alpha_1,\alpha_2],$$
with initial condition given by the previous step, satisfies:
$$\frac{\d}{\d t_2}\Bigl( \frac{\d\alpha_1}{\d t_{n+1}}\Bigr)=[\frac{\d\alpha_1}{\d t_{n+1}},\alpha_2].$$
Since $\frac{\d\alpha_1}{\d t_{n+1}}|_{\{t_2=0\}}=0$, and by uniqueness of the solution of the above equation, we conclude
 that $\frac{\d\alpha_1}{\d t_{n+1}}=0$ for all $t_1,t_2,t_{n+1}$;
\item we extend $a_3|_{\{t_k=0,k\geq4\}}$ into $\alpha_3$ such that $\frac{\d \alpha_3}{\d t_{n+1}}=0$, and so on...
\end{itemize}
We end up with $\alpha_1,\dots,\alpha_{n}$ depending on $t_1,\dots,t_{n+1}$ and satisfying
 $\frac{\d \alpha_k}{\d t_{n+1}}|_{\{t_n=0\}}=0$ for all $k=1,\dots,n.$
Then we consider  the solution $\alpha_{n+1}$ of the equation:
\begin{eqnarray*}
\frac{\d \alpha_{n+1}}{\d t_n}-\frac{\d \alpha_n}{\d t_{n+1}}&=& [\alpha_{n+1},\alpha_n],\\
\alpha_{n+1}|_{\{t_n=0\}} & = & 0.
\end{eqnarray*}
In order to apply Prop. \ref{prop:morphisms:sections}, we need to make sure that $\alpha_{n+1},\alpha_k$ satisfy the commutation
 relations.
 For this, we let:
$$\phi_{n+1,k}:=\frac{\d\alpha_{n+1}}{\d t_k}-\frac{\d \alpha_k}{\d t_{n+1}}-[\alpha_{n+1},\alpha_k],$$
and check that $\phi_{n+1,k}$ satisfies:
$$\frac{\d \phi_{n+1,k}}{\d t_n}=[\alpha_n,\phi_{n+1,k}].$$
Since $\phi_{n+1,k}|_{\{t_n=0\}}=0$ (by our choice of $\alpha_k$) we conclude that $\phi_{n+1,k}=0$ for all $t_n \in I$.
 We will leave the proof of uniqueness to the interested reader.
\end{proof}

\begin{thm}\label{thm:lift}
Given a complete Ehresmann connection $\sigma$ of $\pi: A_E \to A_B$,  there is  a canonical lift
 $\ta: TI^{n} \to A_E$ in the following
commutative diagram of Lie algebroid morphisms:
\begin{equation}\label{diag:lift-a}
\xymatrix{
TI^{n-1} \ar[r]^{\ta^0} \ar[d]_{\d i_{n,0}^n} & A_E \ar[d]_{\pi} \\
TI^{n} \ar[r]^{a} \ar[ur]^{\exists  \ta} & A_B 
 }
\end{equation}
Moreover, if $\tilde{a},\tilde{a}'$ are the lifts obtained by two different Ehresmann connections $\sigma$ and $\sigma'$, then
 there is a Lie algebroid morphism $h: TI^{n+1} \to A_E$ linking $\ta$ and $ \ta'$, that is, satisfying:
\begin{equation*}
 \quad  d_{n+1,0}^{n+1}h=\ta, \quad\quad d_{n+1,1}^{n+1}h=\ta'. 
\end{equation*}\end{thm}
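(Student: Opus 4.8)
The plan is to reduce the lifting problem to the section-level machinery of Propositions \ref{prop:morphisms:sections} and \ref{prop:brackets}, using $\sigma$ to lift in the last coordinate direction. First I would apply Corollary \ref{extend:morphisms} to $a=\sum a_k\,\d t_k$, producing time-dependent sections $\alpha_1,\dots,\alpha_n$ of $A_B$ satisfying the commutation relations \eqref{commuting:t:sections} with $a_k=\alpha_k\circ\gamma$; likewise I would write the given face-lift $\ta^0$ (which necessarily covers $d_{n,0}^{n}a$) through mutually commuting sections $\tilde\alpha_1^0,\dots,\tilde\alpha_{n-1}^0$ of $A_E$ lifting $\alpha_k|_{\{t_n=0\}}$. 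I then set $\tilde\alpha_n:=\sigma(\alpha_n)$, the horizontal lift, and for $k<n$ define $\tilde\alpha_k$ as the unique solution of
\[ \frac{\d\tilde\alpha_k}{\d t_n}-\frac{\d\tilde\alpha_n}{\d t_k}=[\tilde\alpha_k,\tilde\alpha_n],\qquad \tilde\alpha_k|_{\{t_n=0\}}=\tilde\alpha_k^0. \]
Applying $\pi$ to this ODE and comparing with the relation satisfied by $\alpha_k$ shows, by uniqueness, that $\pi(\tilde\alpha_k)=\alpha_k$, so the $\tilde\alpha_k$ do lift the $\alpha_k$.

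The next step is to upgrade these defining equations (which involve only the index $n$) to the full system of commutation relations. This is exactly Proposition \ref{prop:brackets}, applied with $t_n$ in the role of the extra parameter and $\tilde\alpha_n$ in the role of the added section: since the relations hold on $\{t_n=0\}$ (because $\ta^0$ is a morphism), they hold for all $t_n$. Having obtained commuting lifts $\tilde\alpha_1,\dots,\tilde\alpha_n$, I would assemble the morphism by Proposition \ref{prop:morphisms:sections}. Completeness enters precisely here: the base map is the $\sigma$-horizontal lift $\tilde\gamma(t)=\Phi^{\tilde X_n}_{t_n,0}\bigl(\tilde\gamma^0(t_1,\dots,t_{n-1})\bigr)$ of $\gamma$ in the $t_n$-direction, where $\tilde X_n=\sharp_E(\sigma(\alpha_n))+\partial t_n$, and this flow is defined on all of $I^n$ because the connection is complete. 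To meet the completeness hypothesis literally, one may first multiply $\alpha_n$ by a cut-off equal to $1$ near the compact image $\gamma(I^n)$, which makes $\sharp_B(\alpha_n)$ complete without altering $a_n$, hence makes $\sharp_E(\sigma(\alpha_n))$ complete. The resulting $\ta=\sum_k(\tilde\alpha_k\circ\tilde\gamma)\,\d t_k$ is then a morphism lifting $a$ and extending $\ta^0$. It is canonical because $\tilde\gamma$ is characterized as the $\sigma$-horizontal lift of $\gamma$ starting from $\tilde\gamma^0$, and $\ta$ as the unique morphism over $\tilde\gamma$ with the prescribed face and with horizontal last component $\ta_n=\sigma(a_n)$ (using Remark \ref{rem:extend:spheres}); neither depends on the auxiliary extension nor on the cut-off.

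For the second statement I would interpolate the two connections by $\sigma^s:=(1-s)\sigma+s\sigma'$, $s=t_{n+1}\in[0,1]$, which is again a vector-bundle splitting $A_E=\K\oplus H^s$, and lift the degenerate morphism $\hat a:=a\circ\d\pi_{n+1}^{n}:TI^{n+1}\to A_B$ (constant in $s$) by the same procedure as above, now carried out in the $s$-direction. Concretely I would take $\sigma^s(\alpha_n)$ as the horizontal lift in the $n$-th direction, encode the $s$-derivative of the horizontal lifts in a vertical section $\beta_{n+1}\in\Gamma(\K)$ playing the role of the $\d s$-component, solve the remaining equations for $\beta_1,\dots,\beta_n$ as before, invoke Proposition \ref{prop:brackets} for commutation and Proposition \ref{prop:morphisms:sections} to assemble $h$. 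Since $\sigma^s$ restricts to $\sigma$ at $s=0$ and to $\sigma'$ at $s=1$, the construction reduces at those values to that of the first part, giving $d^{n+1}_{n+1,0}h=\ta$ and $d^{n+1}_{n+1,1}h=\ta'$ (and it keeps the $\ta^0$-face fixed); because $\hat a$ has no $\d s$-component, the lift is vertical in $s$, i.e.\ $\pi\circ h=\hat a$, so $h$ is a genuine homotopy of lifts.

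The main obstacle I expect is completeness in this last step: the interpolation $\sigma^s$ need not be a complete connection (a convex combination of complete vector fields need not be complete), and the flow generated in the $s$-direction by $\sharp_E(\beta_{n+1})$ is vertical along the possibly non-compact fibres of $p$. The point to argue carefully is that this flow nevertheless exists for all $s\in[0,1]$ over the compact cube $I^n$ — for instance because both endpoint lifts $\tilde\gamma$ and $\tilde\gamma'$ are already defined there, confining the relevant motion to a bounded region, or by again inserting a cut-off in the $\K$-direction near the compact image. Establishing this defined-ness of the interpolating vertical flow is the technical heart of the second statement; once it is in hand, the morphism property and the boundary identifications follow formally from Propositions \ref{prop:brackets} and \ref{prop:morphisms:sections} exactly as in the first part.
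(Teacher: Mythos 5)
Your proposal follows essentially the same route as the paper: extend $a$ and $\ta^0$ to commuting time-dependent sections via Corollary \ref{extend:morphisms}, set $\talpha_n=\sigma(\alpha_n)$, solve the ODE in the $t_n$-direction with initial data $\talpha^0_k$, and assemble the lift via Propositions \ref{prop:brackets} and \ref{prop:morphisms:sections}; for the second part the paper likewise interpolates $\sigma_s=(1-s)\sigma+s\sigma'$ and packages the resulting path of lifts into a morphism $TI^{n+1}\to A_E$ (there by invoking Lemma \ref{lemma:path-morphism}, which is the same computation you carry out by hand). Your added remarks on cutting off $\alpha_n$ to meet the completeness hypothesis literally, and on the possible incompleteness of the interpolated connection $\sigma_s$, are legitimate refinements of points the paper passes over silently, but they do not change the argument.
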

\begin{proof}
By Cor. \ref{extend:morphisms}, we extend  $a: TI^n \to A_B$ and
$\tilde{a}^0: TI^{n-1} \to A_E$ into time dependent sections $\alpha_1, \dots,
\alpha_n$ and $\talpha^0_1, \dots, \talpha^0_{n-1}$ respectively (in fact $\tilde{\alpha_i}^0$ can be chosen to $\pi$-project
onto $\alpha_i|_{\{t_n=0\}}$ but this is not relevant for our matters here).

We lift $\alpha_n$ to $\talpha_n:=\sigma(\alpha_n)$ using the
Ehresmann connection $\sigma$, then we let $\talpha_i$ for $i=1,
\dots, n-1$  be the solutions of the differential equations
\begin{equation}\label{eq:lift}
\begin{split}
\frac{\d\talpha_i}{\d t_{n}} - \frac{\d\talpha_{n}}{\d t_i}& = [\talpha_i,\talpha_n] \\
\talpha_i|_{\{t_n=0\}} & = \talpha^0_i
\end{split}
\end{equation}
By Prop. \ref{prop:brackets} and Prop. \ref{prop:morphisms:sections}, we obtain a Lie algebroid morphism
 $TI^n \xrightarrow{\tilde{a}} A_E$ such that the base morphism $I^n \xrightarrow{\tg} E$ satisfies  $\tg \circ i_{n,0}^n=\tg^0$.
 Here $\tg^0$ is the base morphism of $\tilde{a}^0$. In fact $\tg$ is the unique lift of the base map $\gamma$ of $a$ such that
  $\tg \circ i_{n,0}^n=\tg^0$ and $\partial_{t_n} \tg$ lies in the image of the horizontal part $H$ of the Ehresmann connection $\sigma$ under the anchor map.
 By construction, $\talpha|_{\{t_n=0\}} = \talpha^0$ so $\tilde{a}^0=i^n_{n,0}\tilde{a}$, and $\tilde{a}$ projects onto
 $a$ via $\pi$.

Then $\ta$ is canonical in the sense that it does not depend on the choice of the extensions $\alpha_i$'s and $\talpha^0_i$'s.
 This again can be argued by choosing an $A$-connection and reduce \eqref{eq:lift} to an equation with functions $\ta_i$. See also Remark \ref{rem:extend:spheres}.

Assume now that we have two Ehresmann connections $\sigma$ and $\sigma'$. Since the space of Ehresmann connections is an affine space based on the vector space
 $\Gamma(A^*_B)\otimes \Gamma(\cK)$, the path $\sigma_s=(1-s)\sigma+s\sigma'$ connects
 both connections. Therefore, we can apply the above construction with parameter $s$ by letting $\tilde{\alpha}_n=\sigma_s(\alpha_n)$. 
 Then we obtain a path of lifts $\ta(s): TI^n \to A_E$ connecting the two lifts via $\sigma$ and $\sigma'$ respectively.
 By Lemma \ref{lemma:path-morphism}, we can build a Lie algebroid morphism $h:TI^{n+1} \to A_E$ linking them by extending this path.
\end{proof}

\subsection{The transgression map}\label{transgression}

\subsubsection{Construction of the transgression map $\partial_n: \pi_n(A_B) \to \pi_{n-1} (\cK)$} \label{sec:const-trans}
Recall that  an element in $\pi_n(A_B)$ is represented by a sphere, namely a Lie algebroid morphism 
\[\text{$a=\sum_{k=1}^n a_k d t_k: TI^{n} \to A_B$ with $a_k(t_1, \dots, t_n)=0$ if one of $\{t_1, \dots, \hat{t}_k, \dots, t_n\}$ is $0$ or 1.}\] 
We denote $b_0\in B$ the point $a$ is based at, and for any $x_0\in E_{b_0}$, we apply
 the construction in the proof of Theorem \ref{thm:lift} with the following choices:
\begin{itemize}
\item $\ta^0=0_{x_0}$;
\item the time dependent sections $\talpha^0_i=0$, for all $i=0,\dots,n-1$;
\item $\alpha_n$ satisfying $\alpha_n|_{\{t_k=0\; \text{or} \; 1\}} = 0$, for all $k=1,\dots, n-1$.
\end{itemize}
We obtain a lift $\ta: TI^n \to A_E$ such that $d^n_{n,0}\ta=0$. Note that $\ta$ is not necessary a sphere anymore.
 However, it is zero on the boundary of $I^n$ except the interior of the face $\{t_n=1\}$. The reason is the following:
 when we restrict the equation \eqref{eq:lift} on $\{t_k=0,1\}$ for $k=1,\dots, n-1$, we have $\talpha_n|_{\{t_k=0, 1\}} = 0$
  since $\alpha_n$ has this property, so it is easily seen that the solution $\talpha_i$ with initial
 value $\talpha^0_i=0$ identically vanishes, that is $\talpha_i|_{\{t_k=0,1\}}=0$. Hence, we obtain: 
\[\text{ $\ta_i|_{\{t_k=0, 1\}}=0$ for any $k=1, \dots, n-1$ and $i=1, \dots, n$. }\]
Therefore $d_{n,1}^n\ta=\sum_{k=1}^{n-1} \ta_k|_{\{t_n=1\}} dt_k : TI^{n-1} \to 
A_E $ satisfies the correct boundary conditions to make it a $(n-1)$-sphere.
Now we may assume that the sphere we choose to represent satisfies $d^n_{n, 1}a=0$ by Remark \ref{rem:squares:vs:spheres} from the very beginning.  Then $d^n_{n,1} \ta$ is a sphere in $\cK$ since $\pi(d^n_{n,1}\ta)=d^n_{n,1}a=0$.

 We thus define:
 $$\partial_n([a])_{x_0}:=[\d_{n,1}^n\ta]\in\pi_{n-1}(\K,x_0).$$

\subsubsection{Well-definedness of $\partial_n$.}\label{sec:well-define}
The argument below will be similar as in Lemma \ref{lemma:path-morphism}, but we need to be more precise on the homotopy $h$,
 so let us give a detailed explanation.

Let $h$ be a homotopy between $a^-=d_{n+1,0}^{n+1}h$ and $a^+=d_{n+1,1}^{n+1}h$. For simplicity, we still assume $d^n_{n, 1}a^+=d^n_{n, 1}a^-=d^n_{n, 1}h=0$ by Remark \ref{rem:squares:vs:spheres}. In particular we have a family of
 spheres $a^{t_{n+1}}$ based at a same point so we can do the construction of the transgression map with parameter $t_{n+1}$
 (we keep the same notations as in Section \ref{sec:const-trans} but with an extra parameter $t_{n+1}$).

Then we consider the solution $\talpha_{n+1}$ of the equation:
\begin{equation}\label{eq:well:def}
\begin{split}
\frac{\d \talpha_{n+1}}{\d t_{n}}-\frac{\d \sigma(\alpha_{n})}{\d t_{n+1}}& =  [\talpha_{n+1},\sigma(\alpha_n)] \\
\talpha_{n+1}|_{\{t_{n}=0\}} & = 0,
\end{split}
\end{equation}
For $k=1, \dots, n-1$,  $\talpha_{n+1}|_{\{t_k=0,1\}}=0$ since we chose $\alpha_n$ vanishing on this set for all $t_{n+1}$.
 Moreover, if we let $\phi_{k,n+1}:=\frac{\d \talpha_{n+1}}{t_{k}}-\frac{\d \talpha_{k}}{\d t_{n+1}}- [\talpha_{n+1},\talpha_k]$,
 a straightforward computation shows that $\phi_{k,n+1}$ satisfies:
 $$\frac{\d \phi_{k,n+1}}{\d t_n}=[\sigma(\alpha_n),\phi_{k,n+1}],$$
 with initial condition $\phi_{k,n+1}=0$. By uniqueness of the solution of such an equation, we see that $\phi_{k,n+1}=0$.
We conclude that $\frac{\d \talpha_{n+1}}{\d t_{k}}-\frac{\d \talpha_{k}}{\d t_{n+1}}=[\talpha_{n+1},\talpha_k]$
and by applying Prop. \ref{prop:morphisms:sections} and Prop. \ref{prop:brackets},  we obtain a Lie algebroid morphism $\tilde{h}:TI^{n+1}\to A$, with boundary property
\begin{equation} \label{eq:h-boundary}
\text{$\tilde{h}_{n+1}|_{\{ t_k=0\; \text{or} \; 1\}} = 0$ for $k=1, \dots, n-1$.}\end{equation}
Thus the face $\{t_{n}=1\}$ of $\tilde{h}$ induces a $\K$-homotopy between $\d_{n,1}^{n}\tilde{a}^-$ and $d_{n,1}^n\tilde{a}^+$ since $\pi(\tilde{h}|_{t_n=1})=h|_{t_n=1}=0$.
 
A similar argument applies to show that the transgression map is independent of the connection: assume we have two connections
 $\sigma_0,\sigma_1$. Then we define a family of connections by $\sigma_{t_n+1}=(1-t_{n+1})\sigma_0+t_{n+1}\sigma_1$.
 Then one can use $\sigma_{t_{n+1}}$ to do the construction of \ref{sec:const-trans} with parameter $t_{n+1}$.
 Then the above construction holds, yielding a $\K$-homotopy  between $\d_{n,1}^{n}\tilde{a}^0$ and $\d_{n,1}^n \tilde{a}^1$,
 where $\tilde{a}^i$ is the lift of $a$ obtained using $\sigma_i$.

\subsubsection{$\partial_n$ is a group morphism}\label{sec:group-morph}
Given $a^+, a^- \in S_n(A_B)$, then $[a^+]\cdot [a^-]$ can be represented by $a^+ \odot_{t_{n-1}} a^-$ by Lemma \ref{lemma:diff-glue}. Then we have the following commutative diagram
\[
\xymatrix{ TI^{n-1} \sqcup TI^{n-1} \ar[r]^{0} \ar[d]_{t_n=0} & A_E \ar[d]^{\pi} \\
TI^n\sqcup TI^n \ar[r]^{a^+ \sqcup a^-} \ar[ur]^{\ta^+ \sqcup \ta^-} & A_B 
}
\]
Since concatenation with respect to $t_{n-1}$ is the composed map $TI^n \xrightarrow{g_{t_{n-1}}}  TI^n \sqcup TI^n \xrightarrow{a^+\sqcup a^-} A_B$, it is not hard to see that the lift of $a^+ \odot_{t_{n-1}} a^-$ restricted to $t_n=1$ satisfies
\[\widetilde{a^+ \odot_{t_{n-1}} a^-}|_{t_n=1} = \left( \ta^+ \odot_{t_{n-1}} \ta^- \right) |_{t_n=1}  =\ta^+ |_{t_n=1} \odot_{t_{n-1}} \ta^- |_{t_n=1}.\]
Thus $\partial_n([a^+]\cdot [a^-])=\partial_n([a^+])\cdot \partial_n([a^-])$.  

\subsection{Exactness}
\begin{prop}\label{lem:sE:decomposition}
 Let $\K\hookrightarrow A_E\twoheadrightarrow A_B$ be a fibration. Then any sphere $S\in \mathcal{S}_n(A_E)$ is homotopic to one of the form: 
 $$S\simeq C_\K \odot C_H, $$
 where $C_\K:TI^n\to \K$ and $C_H:TI^n\to A_E$ are concatenated along a $(n-1)$-sphere representing $\partial_n(\pi(S))\in{\pi_{n-1}(\K)}.$
\end{prop}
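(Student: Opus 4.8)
The plan is to realize the two pieces explicitly: take $C_H$ to be the canonical horizontal lift of $\pi(S)$ furnished by the transgression construction, take $C_\K$ to be the purely vertical motion that remains, and then connect $S$ to their concatenation by a homotopy that ``unmixes'' the horizontal and vertical directions. First I would set $a:=\pi(S)\in\mathcal S_n(A_B)$ and, invoking Remark \ref{rem:squares:vs:spheres}, normalize $S$ (hence $a$) so that $d^n_{n,1}a=0$; this is exactly the hypothesis under which the construction of Section \ref{sec:const-trans} produces a lift whose top face lands in $\K$. Applying Theorem \ref{thm:lift} to $a$ with base point $x_0$ the base point of $S$, trivial initial face $\ta^0=0_{x_0}$, trivial initial sections, and with $\alpha_n$ vanishing on $\{t_k=0,1\}$ for $k<n$, yields the canonical lift $C_H:=\ta\colon TI^n\to A_E$. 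By the discussion in Section \ref{sec:const-trans}, $C_H$ vanishes on $\partial I^n$ except on the interior of $\{t_n=1\}$, and $d^n_{n,1}C_H$ is an $(n-1)$-sphere in $\K$ representing $\partial_n(\pi(S))$.

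Next I would extract the vertical part. Using Corollary \ref{extend:morphisms} and Remark \ref{rem:extend:spheres}, extend the last component of $S$ to a time-dependent section $\beta_n\in\Gamma(A_E)$, so that $S$ is recovered from $\beta_n$ by solving the bracket equations of Proposition \ref{prop:brackets} with vanishing initial conditions on $\{t_n=0\}$. The Ehresmann splitting $A_E=\K\oplus H$ decomposes $\beta_n=\sigma(\pi(\beta_n))+\kappa_n$ with $\kappa_n\in\Gamma(\K)$, where the first summand is precisely the last-direction section driving $C_H$. I would then build $C_\K$ by running the same solving procedure with last-direction section $\kappa_n$ and with initial face (at $t_n=0$) equal to the transgression sphere $d^n_{n,1}C_H$. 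Since $\K$ is an ideal (for $\pi[\xi,\kappa]=[\pi\xi,0]=0$), the bracket equations for $C_\K$ have $\K$-valued driving terms and $\K$-valued initial data, so by uniqueness their solution stays in $\Gamma(\K)$; hence $C_\K\colon TI^n\to\K$ factors through $\K$, has trivial faces except $d^n_{n,0}C_\K=d^n_{n,1}C_H$, and the concatenation $C_\K\odot_{t_n}C_H$ is a genuine $n$-sphere glued along an $(n-1)$-sphere representing $\partial_n(\pi(S))$.

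Finally I would produce the homotopy $S\simeq C_\K\odot C_H$. The point is that $\beta_n=\sigma(\pi\beta_n)+\kappa_n$ performs the horizontal and vertical motions simultaneously, whereas $C_\K\odot C_H$ performs them one after the other, and these are linked by the standard reparametrization homotopy of Section \ref{sec:concatenation}, now carried out on the driving section via the glue map $g_{t_n}$ of \eqref{eq:glue-map} together with an auxiliary parameter $t_{n+1}$. Concretely I would prescribe a family of last-direction sections interpolating $\beta_n$ (at $t_{n+1}=0$) to the concatenated section (at $t_{n+1}=1$), solve for the remaining components with vanishing initial data using Proposition \ref{prop:brackets}, and assemble a morphism $h\colon TI^{n+1}\to A_E$ by Proposition \ref{prop:morphisms:sections} and Lemma \ref{lemma:path-morphism}; the boundary conditions of Definition \ref{def:homotopy} then follow because $\beta_n$, $\sigma(\pi\beta_n)$ and $\kappa_n$ all vanish when some $t_k\in\{0,1\}$ with $k<n$. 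I expect the main obstacle to be exactly this last step: since the horizontal and vertical sections do not commute, the interpolation of the driving section must be accompanied by a careful (re)solution of the auxiliary bracket equations so that every slice stays a Lie algebroid morphism, and one must verify smoothness across the concatenation seam $t_n=\tfrac12$ uniformly in $t_{n+1}$, exactly as in the smoothness lemma of Section \ref{sec:concatenation}.
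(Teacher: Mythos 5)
Your decomposition of the driving section, $\beta_n=\sigma(\pi\beta_n)+\kappa_n$, is the right starting point, but the step where you build $C_\K$ ``with last-direction section $\kappa_n$'' contains the essential gap: performing the horizontal motion and then the vertical motion driven by $\kappa_n$ does \emph{not} reproduce $S$, because $[\sigma(\pi\beta_n),\kappa_n]\neq 0$ in general. Already for $n=1$ and $A_E=\mathfrak{g}$ a Lie algebra this fails at the level of endpoints: if $g$ solves $\dot g g^{-1}=\alpha^H+\kappa$ and one writes $g=kh$ with $h$ the horizontal flow, then $\dot k k^{-1}=\kappa+(1-\mathrm{Ad}_k)\alpha^H$, which lies in $\mathfrak{k}$ (as $\mathfrak{k}$ is an ideal) but is not $\kappa$; it is the \emph{horizontally transported} vertical part. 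Since homotopies of $A$-paths fix the induced groupoid element, your $C_\K\odot C_H$ is in general not homotopic to $S$. The ``standard reparametrization homotopy'' of Section \ref{sec:concatenation} only relates a cube to its reparametrizations and to concatenations along different coordinates; it cannot unmix two non-commuting flows. This is exactly the point the paper's proof addresses with the auxiliary section \eqref{eq:split}, $\alpha_{n+1}=-t_n\,\alpha^H_n(t_n(1-t_{n+1}))$, whose $t_{n+1}$-flow from $0$ to $1$ is $\Phi^{\alpha^H_n}_{t_n,1}$: Claim \ref{lemma:sE:claim1} then shows that the correct vertical driving section is $(\Phi^{\alpha^H_{n}}_{t_n,1})_*(\alpha_n^\K)$, the pushforward of the vertical part under the horizontal flow, which stays in $\Gamma(\K)$ precisely because $\alpha^H_n$ is $\pi$-projectable.

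A second, related gap is that your homotopy is announced rather than constructed: interpolating the last driving section between $\beta_n$ and the concatenated one and ``re-solving'' does not by itself produce a morphism $TI^{n+1}\to A_E$ whose faces at $t_{n+1}=0,1$ are $S$ and $C_\K\odot C_H$ and whose remaining faces vanish as Definition \ref{def:homotopy} requires; Lemma \ref{lemma:path-morphism} needs a family of morphisms already agreeing on a face, and you would still have to control the boundary behaviour of the generated $(n+1)$-st component (you also never check that the top face $d^n_{n,1}C_\K$ vanishes, without which the concatenation is not a sphere). The paper sidesteps both difficulties at once: the single $(n+1)$-cube $\lambda$ built from $\alpha_{n+1}$ \emph{is} the homotopy, and $S$, $C_\K$, $C_H$ all appear as its faces ($d^{n+1}_{n+1,0}\lambda$, $d^{n+1}_{n+1,1}\lambda$ and $d^{n+1}_{n,1}\lambda$ respectively), with Claims \ref{lemma:sE:claim3} and \ref{lemma:sE:claim4} supplying the boundary conditions. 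Your identification of $C_H$ with the canonical lift of $\pi(S)$ and your observation that $\K$-valued data propagates because $\K$ is an ideal are both correct, but the heart of the proposition is the flow conjugation, and that is missing.
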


\begin{remark}
In Proposition \ref{lem:sE:decomposition}, neither $C_\K$ nor $C_H$ is supposed to be a sphere.
 However, since $C_\K$ lies in the kernel of $\pi$, we see that $C_H$ and $S$ have same image by $\pi$.

As we shall see, $C_H$ is a lift of the sphere $\pi(S)$. It might not be a sphere itself,  however,
 the failure of $C_H$ to be a sphere is measured by its boundary: it is a $(n-1)$-sphere whose homotopy class is precisely
 $\partial([\pi(S)])$. \\
\psfrag{x}{$x$} \psfrag{ck}{$C_\K$} \psfrag{ch}{$C_H$} \psfrag{homotopic to}{homotopic to} \psfrag{pi}{$\downarrow \pi$}
\centerline{\epsfig{file=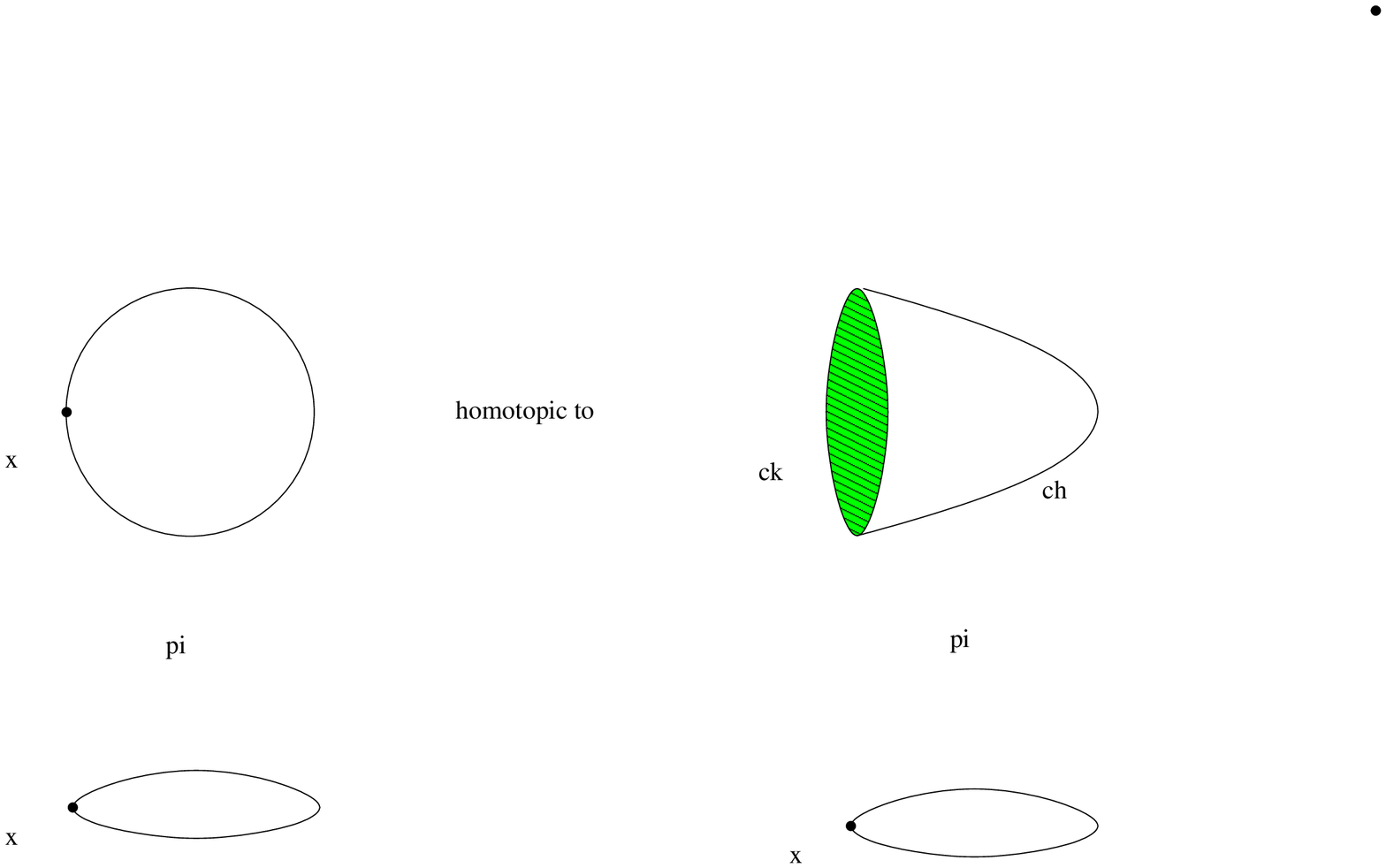, height=4.5cm, width=12cm} }
\end{remark}
\begin{remark}
 In the case $n=1$, the proof below will show that given a connection, any $A_E$-path is homotopic
 to the concatenation of a $\K$-path with a horizontal path (\emph{i.e} path in $\im \sigma$).
 In order to understand this proof, it is actually a good idea to focus on that case.
\end{remark}

\begin{proof}
 We pick a connection $\sigma$, denote $S=\sum_{k=1}^n a_k \d t_k$ and extend $a_n:I^n\to A_E$ into a time-dependent section
 of $A_E$ of the form $\alpha_n^H+\alpha_n^\K$ as follows:
\begin{Itemize}
 \item $\alpha_n^H=\sigma(\alpha_n^B)$, where $\alpha_n^B$ is any time-dependent section of $A_B$ extending $\pi\circ a_n$,
 \item $\alpha_n^\K$ is any time-dependent section of $\K$ such that $\alpha_n^H+\alpha_n^\K$ extends $a_n$,
 \item  $\alpha_n^B$ and $\alpha_n^\K$ are chosen so that they vanish whenever one of $\{t_1,\dots, t_n\}$ is $0$ or $1$.
\end{Itemize}
Then we introduce an extra variable $t_{n+1}$ and define a family of sections $\alpha_{n+1}$ depending on
 $(t_1,\dots,t_{n+1})\in I^{n+1}$ and defined by the formula:
\begin{equation}\label{eq:split}
\alpha_{n+1}(t_1,\dots,t_{n+1}):=-t_n. \alpha_n^H(t_1,\dots,t_{n-1},t_n(1-t_{n+1})).
\end{equation}
 The above left hand side will be written $-t_n\alpha_n^H(t_n(1-t_{n+1}))$ in order to keep notations short.
 Note  that $\alpha_{n+1}$ is chosen such that, as a family of $t_{n+1}$-time dependent section with parameters $(t_1,\dots,t_n)$,
 its flow satisfies: $\Phi^{\alpha_{n+1}}_{1,0}=\Phi_{t_n,1}^{\alpha^n_H}$ (here, $\Phi^\alpha$ denotes the flow of the linear
 vector field on $A$ induced by $[\alpha,-]$, see \cite[A.1]{cf}). In fact, we will use $\alpha_{n+1}$ to transport $S$
 into a morphism with values in $\K$ (as we shall see, its flow will somehow cancel the $\alpha^H_n$ component of $a_n$).

Now we set $\alpha_n^0:=\alpha_n^H+\alpha^K_n$, and let $\alpha^0_k$  $(k=1,\dots,n-1)$ be the solution of:
\begin{eqnarray}
\label{prop:sE:n:k}   \frac{\d \alpha_k^0}{\d t_{n}}-\frac{\d \alpha^0_{n}}{\d t_{k}}&=&[\alpha_k^0,\alpha^0_{n}],\\
\label{prop:sE:n0}                       \alpha_k^0|_{\{t_n=0\}}&=&0.
\end{eqnarray}
As explained in Remark \ref{rem:extend:spheres}, $\alpha_k^0$ satisfy the commutation relations, and extend the morphism $S$.
 Then we let $\alpha_k$ $(k=1,\dots,n)$ be the solution of the equation:
\begin{eqnarray}
 \label{prop:sE:k:n+1} \frac{\d\alpha_k}{\d t_{n+1}}-\frac{\d\alpha_{n+1}}{\d t_k}&=&[\alpha_k,\alpha_{n+1}],\\
 \label{prop:sE:k:n+1:ic} \alpha_k|_{\{t_{n+1}=0\}}&=&\alpha_k^0.
\end{eqnarray}
By applying Proposition \ref{prop:brackets} we see that $\alpha_k$'s satisfy the commutation relations:
\begin{equation}\label{prop:sE:k:l}\frac{\d\alpha_k}{\d t_l}-\frac{\d\alpha_l}{\d t_k}=[\alpha_k,\alpha_l], \quad\quad (k,l=1,\dots,n+1)\end{equation}

 Thus Proposition \ref{prop:morphisms:sections} applies (with $n+1$ variables) so that one gets a Lie algebroid morphism  $\lambda:TI^{n+1}\to A_E$ by setting $\lambda:=\sum a_k \d t_k,$ where $a^t_k:=\alpha^t_k(\gamma(t))$ with $ (k=1,\dots,n+1).$

Let us focus on $\alpha_n$.

\begin{claim}\label{lemma:sE:claim1} ${\alpha_n}|_{\{t_{n+1}=1\}}$ has values in $\Gamma(\K)$.
\end{claim}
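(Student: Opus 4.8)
The plan is to descend the whole computation to the base algebroid $A_B$ via $\pi$ and use that $\K=\ker\pi$: since $\alpha_n|_{\{t_{n+1}=1\}}$ lies in $\K$ exactly when it is annihilated by $\pi$, it suffices to prove $\pi(\alpha_n)|_{\{t_{n+1}=1\}}=0$. First I would set $\beta:=\pi(\alpha_n)$ and $\beta_{n+1}:=\pi(\alpha_{n+1})$ and apply $\pi$ to \eqref{prop:sE:k:n+1} in the case $k=n$. As $\pi$ is a Lie algebroid morphism it commutes with the bracket and with the $t_i$-derivatives of time-dependent sections, so $\beta$ solves the closed equation $\frac{\d\beta}{\d t_{n+1}}-\frac{\d\beta_{n+1}}{\d t_n}=[\beta,\beta_{n+1}]$ with initial value $\beta|_{\{t_{n+1}=0\}}=\pi(\alpha_n^0)=\alpha_n^B$ (here $\pi\circ\sigma=\id$ and $\alpha_n^\K\in\ker\pi$). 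Reading off \eqref{eq:split}, the driving term is the explicit section $\beta_{n+1}=-t_n\,\alpha_n^B(t_1,\dots,t_{n-1},t_n(1-t_{n+1}))$, so this is a bracket-type linear ODE in $t_{n+1}$ of exactly the kind solved in Prop.~\ref{prop:brackets}, and is determined by its value at $t_{n+1}=0$.

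The heart of the argument is to write down that solution by inspection. I claim it is $\beta(t)=(1-t_{n+1})\,\alpha_n^B(t_1,\dots,t_{n-1},t_n(1-t_{n+1}))$. Abbreviating $u:=t_n(1-t_{n+1})$ and $g:=\alpha_n^B$, a one-line differentiation gives $\frac{\d\beta}{\d t_{n+1}}=-g(u)-t_n(1-t_{n+1})\frac{\partial g}{\partial u}$, which is exactly $\frac{\d\beta_{n+1}}{\d t_n}$; thus the two derivative terms in the ODE cancel. The bracket term vanishes as well, because at each fixed $t$ both $\beta$ and $\beta_{n+1}$ are real multiples of one and the same section $g(u)$, so $[\beta,\beta_{n+1}]=-t_n(1-t_{n+1})[g(u),g(u)]=0$ by antisymmetry of the bracket. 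Since this candidate also meets the initial condition $\beta|_{\{t_{n+1}=0\}}=g(t_n)=\alpha_n^B$, uniqueness of solutions to the bracket ODE forces $\beta$ to equal it. Evaluating at $t_{n+1}=1$ then gives $\beta|_{\{t_{n+1}=1\}}=0$, i.e. $\alpha_n|_{\{t_{n+1}=1\}}\in\ker\pi=\K$, which is the claim.

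The step I expect to carry the whole proof is the observation that the projected sections $\beta$ and $\beta_{n+1}$ are proportional. This is precisely what the unusual shape of $\alpha_{n+1}$ in \eqref{eq:split}—the prefactor $-t_n$ together with the reparametrized argument $t_n(1-t_{n+1})$—is engineered to produce: it makes $\pi(\alpha_{n+1})$ a scalar multiple of $\alpha_n^B(u)$ and simultaneously arranges $\frac{\d\beta}{\d t_{n+1}}=\frac{\d\beta_{n+1}}{\d t_n}$, so that the nonlinear bracket term drops out by antisymmetry and the linear ODE is solved at a glance. Once this structural feature is noticed the verification is routine; the alternative route through the flow identity $\Phi^{\alpha_{n+1}}_{1,0}=\Phi^{\alpha_n^H}_{t_n,1}$ recorded after \eqref{eq:split} reaches the same conclusion but is less transparent.
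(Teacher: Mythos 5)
Your proof is correct, but it takes a genuinely different route from the paper's. The paper works entirely upstairs in $A_E$: it writes the solution of \eqref{prop:sE:k:n+1} via the explicit flow formula \eqref{eq:explicit:alpha:n}, evaluates the integral term using the Cartan formula, and lands on the sharper identity $\alpha_n|_{\{t_{n+1}=1\}}=(\Phi^{\alpha_n^H}_{t_n,1})_*(\alpha_n^\K)$, from which membership in $\Gamma(\K)$ follows because the flow of a $\pi$-projectable section preserves $\Gamma(\K)$. You instead push the ODE down to $A_B$ and exhibit the solution $(1-t_{n+1})\,\alpha_n^B(t_n(1-t_{n+1}))$ by inspection, exploiting that $\pi(\alpha_{n+1})$ and your candidate are pointwise proportional so the bracket term dies by antisymmetry; uniqueness then gives $\pi(\alpha_n)|_{\{t_{n+1}=1\}}=0$. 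Your computation checks out (both derivative terms equal $-g(u)-t_n(1-t_{n+1})g'(u)$ with $u=t_n(1-t_{n+1})$, and the initial condition matches since $\pi(\alpha_n^0)=\alpha_n^B$), and it is more elementary — no flow pushforwards or Cartan formula — at the cost of losing the explicit formula for the $\K$-valued face, which the paper does not actually need later. The one step you should make explicit is why ``applying $\pi$'' to \eqref{prop:sE:k:n+1} is legitimate: $\pi$ covers the submersion $p$, so $\pi\circ\alpha_n$ is a section of $A_B$ only if $\alpha_n$ is $\pi$-projectable at every $t_{n+1}$, and only then does $\pi$ intertwine the brackets. This does hold here — the initial datum $\sigma(\alpha_n^B)+\alpha_n^\K$ and the driving section $\alpha_{n+1}$ are projectable, and projectable sections form a Lie subalgebra extending $\Gamma(A_B)$ by $\Gamma(\K)$ (see \cite[Lemma 1.9]{Br}, quoted in the introduction), so the flow formula for the solution keeps it projectable — but as written your argument silently assumes it.
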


\begin{proof}[Proof of Claim \ref{lemma:sE:claim1}] The following is an  explicit formula for the solution $\alpha_n$ of
 \eqref{prop:sE:k:n+1}:
\begin{equation}\label{eq:explicit:alpha:n}
 \alpha_n=
      \int_0^{t_{n+1}} \bigl(\Phi^{\alpha_{n+1}}_{t_{n+1},s}\bigr)_*\Bigl({\frac{\d\alpha_{n+1}}{\d t_n}}|_{\{t_{n+1}=s\}}\Bigr)\d s
     +(\Phi^{\alpha_{n+1}}_{t_{n+1},0})_*\bigl(\alpha_n|_{\{t_{n+1}=0\}}\bigr).
\end{equation}
This can be checked by a direct computation (in fact it is the same as in \cite[Prop 1.3]{cf} with an extra term coming
 from the initial condition).

In the first term of this expression, we first compute from \eqref{eq:split} that:
\begin{equation}
\frac{\d\alpha_{n+1}}{\d t_{n}}=\frac{\d}{\d t_{n+1}} \Bigl((1-t_{n+1})\alpha^H_n(t_n(1-t_{n+1}))\Bigr).
\end{equation}
Then we recall the Cartan formula for Lie algebroids: given a $t$-time dependent section $\mu$, and $\nu$ a smooth family of
 sections depending on $t$, we have:
\begin{equation*}
 \bigl(\Phi_{t,s}^{\mu}\bigr)_*\Bigl(\frac{\d \nu}{\d t}|_{\{t=s\}}+\bigl[\mu|_{\{t=s\}},\nu|_{\{t= s\}}\bigr]\Bigr)=\frac{\d}{\d s}(\Phi_{t,s}^{\mu})_*(\nu).
\end{equation*}
Applied with $t=t_{n+1}$, $\mu=\alpha_{n+1}$ and $\nu=(1-t_{n+1})\alpha^H_n(t_n(1-t_{n+1}))$
 (so that $\bigl[\mu|_{\{t=s\}},\nu|_{\{t= s\}}\bigr]=0$) we obtain the following expression for the first term of
 \eqref{eq:explicit:alpha:n}:
\begin{eqnarray*}
\int_0^{t_{n+1}} \bigl(\Phi^{\alpha_{n+1}}_{t_{n+1},s}\bigr)_*\Bigl({\frac{\d\alpha_{n+1}}{\d t_n}}|_{\{t_{n+1}=s\}}\Bigr)\d s
&=&\int_0^{t_{n+1}} \Bigl[\frac{\d}{\d s}\, (\Phi^{\alpha_{n+1}}_{t_{n+1},s})_*\bigl((1-s)\alpha^H_n(t_n(1-s))\bigr) \Bigr]\d s\\
&=&(1-t_{n+1})\alpha^H_n(t_n(1-t_{n+1}))-(\Phi^{\alpha_{n+1}}_{t_{n+1},0})_*\bigl(\alpha^H_n(t_n)\bigr).
\end{eqnarray*}
In particular, when setting $t_{n+1}=1$ and taking into account the second term in \eqref{eq:explicit:alpha:n} we get:
\begin{eqnarray*}
\alpha_n|_{\{t_{n+1}=1\}}
&=&-(\Phi^{\alpha_{n+1}}_{1,0})_*\bigl(\alpha^H_n(t_n)\bigr)+(\Phi^{\alpha_{n+1}}_{1,0})_*\bigl( \alpha_n^H+\alpha_n^\K   \bigr)\\
&=&(\Phi^{\alpha_{n+1}}_{1,0})_*\bigl( \alpha_n^\K   \bigr)\\
&=&(\Phi^{\alpha^H_{n}}_{t_n,1})_*(\alpha_n^\K).
\end{eqnarray*}
Since $\alpha^H_n$ is $\pi$-projectable, its flow preserves sections of $\K$ (this is an easy consequence of Lemma 1.9 in \cite{Br}).
 Thus $\alpha_n|_{\{t_{n+1}=1\}}$ is indeed a section of $\K$, as claimed.
\end{proof}

We now list the non-trivial faces of $\lambda$.
\begin{claim}\label{lemma:sE:claim3} $\lambda:TI^{n+1}\to A_E$ satisfies:
\begin{Enumeratei}
\item $d_{n+1,0}^{n+1} \lambda=S$
\item $d_{n+1,1}^{n+1} \lambda=:C_\K$ has values in $\K$;
\item \hspace{7pt}$d_{n,1}^{n+1} \lambda=:C_H$ lifts  $\pi(S)$;
\end{Enumeratei}

\end{claim}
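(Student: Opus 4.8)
The plan is to prove the three face identities by pushing the whole construction down to $A_B$ via $\pi$ and combining this with the already established Claim \ref{lemma:sE:claim1}. Throughout I write $\bar\lambda := \pi\circ\lambda$ and $\bar\alpha_k := \pi\circ\alpha_k$; since $\pi$ is a Lie algebroid morphism, the $\bar\alpha_k$ automatically satisfy the projected versions of \eqref{prop:sE:n:k}, \eqref{prop:sE:k:n+1} and of the commutation relations \eqref{prop:sE:k:l}, and $\bar\lambda$ is the morphism built from them. Note that downstairs the ``$\K$-part'' disappears: $\bar\alpha_n^0 = \pi(\alpha_n^H + \alpha_n^\K) = \alpha_n^B$ because $\pi\sigma = \id$ and $\alpha_n^\K\in\Gamma(\K)=\ker\pi$, while \eqref{eq:split} projects to $\bar\alpha_{n+1} = -t_n\,\alpha_n^B(t_n(1-t_{n+1}))$. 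Item i) is then immediate from the initial conditions \eqref{prop:sE:k:n+1:ic}: at $t_{n+1}=0$ we have $\alpha_k|_{\{t_{n+1}=0\}}=\alpha_k^0$, and by Remark \ref{rem:extend:spheres} the $\alpha_k^0$ are precisely the time dependent sections extending $S$. Putting $t_{n+1}$ as the last flow direction in the base-map construction of Proposition \ref{prop:morphisms:sections} and using that $\Phi^{X_{n+1}}_{0,0}=\id$, the base map restricts on $\{t_{n+1}=0\}$ to the base map of $S$; hence $d^{n+1}_{n+1,0}\lambda = S$.

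For item ii) it suffices to prove $\bar\lambda|_{\{t_{n+1}=1\}}=0$, since this is exactly $\pi\circ C_\K=0$. First, Claim \ref{lemma:sE:claim1} gives $\alpha_n|_{\{t_{n+1}=1\}}\in\Gamma(\K)$, so $\bar\alpha_n|_{\{t_{n+1}=1\}}=0$. Next I would show $\bar\alpha_k|_{\{t_n=0\}}=0$ for every $t_{n+1}$ and $k<n$: since $\bar\alpha_{n+1}$ carries the factor $t_n$ it vanishes on $\{t_n=0\}$ together with its $t_k$-derivatives, so the projected equation \eqref{prop:sE:k:n+1} forces $\tfrac{\d\bar\alpha_k}{\d t_{n+1}}|_{\{t_n=0\}}=0$; combined with the initial condition $\bar\alpha_k^0|_{\{t_n=0\}}=0$ coming from \eqref{prop:sE:n0}, this gives $\bar\alpha_k|_{\{t_n=0\}}=0$. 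Finally I restrict the projected relation \eqref{prop:sE:k:l} (with $l=n$) to the face $\{t_{n+1}=1\}$: as $\bar\alpha_n$ vanishes identically there, it reduces to $\tfrac{\d\bar\alpha_k}{\d t_n}|_{\{t_{n+1}=1\}}=0$, so $\bar\alpha_k|_{\{t_{n+1}=1\}}$ is independent of $t_n$; being zero at $t_n=0$ it vanishes, for all $k\le n$. Thus every component of $\bar\lambda$ dies on $\{t_{n+1}=1\}$, i.e. $C_\K$ takes values in $\K$.

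For item iii) I would compute $\pi\circ C_H = d^{n+1}_{n,1}\bar\lambda$ and identify it with the sphere $\pi(S)$. On the face $\{t_n=1\}$ the section governing the surviving parameter $t_{n+1}$ is $\bar\alpha_{n+1}|_{\{t_n=1\}} = -\alpha_n^B(t_1,\dots,t_{n-1},1-t_{n+1})$, which is exactly the $n$-th datum $\alpha_n^B$ of $\pi(S)$ read backwards in its last variable; the remaining initial data match those of $\pi(S)$ after the substitution $t_{n+1}\mapsto 1-t_{n+1}$. Invoking the uniqueness of Remark \ref{rem:extend:spheres} (a morphism is determined by one face together with its top component), this identifies $d^{n+1}_{n,1}\bar\lambda$ with a reparametrization of $\pi(S)$; since reparametrizations, and reversal of the last coordinate, are homotopies by the discussion of Section \ref{sec:concatenation}, $C_H$ is a genuine lift of $\pi(S)$.

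The main obstacle is item iii): honestly tracking the reparametrization $t_{n+1}\mapsto 1-t_{n+1}$ and the sign in \eqref{eq:split}, and verifying through Remark \ref{rem:extend:spheres} that the $\{t_n=1\}$ face really is a reparametrization of $\pi(S)$ and not merely some morphism with the same projected data. The bootstrapping in ii) from the $n$-th component to the lower components is the second place where care is needed, but it is purely a matter of integrating the linear section-valued ODEs obtained from \eqref{prop:sE:k:n+1} and \eqref{prop:sE:k:l} with their vanishing initial conditions.
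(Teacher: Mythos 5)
Your items i) and ii) are essentially the paper's own argument transported to $A_B$: the paper runs the same two ODE restrictions (first \eqref{prop:sE:k:n+1} on $\{t_n=0\}$, where $\alpha_{n+1}$ vanishes identically, then \eqref{prop:sE:k:l} with $l=n$ on $\{t_{n+1}=1\}$, where $\alpha_n$ is $\K$-valued by Claim \ref{lemma:sE:claim1}) directly upstairs, concluding that $\alpha_k|_{\{t_{n+1}=1\}}$ stays in $\Gamma(\K)$ because these equations preserve $\Gamma(\K)$. Your projected version is equivalent, but the step ``$\bar\alpha_k:=\pi\circ\alpha_k$ satisfies the projected equations'' silently uses that all the $\alpha_k$ are $\pi$-projectable (a section of $A_E$ does not push forward along the submersion $p$ in general, and brackets only project for projectable sections). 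This is true here --- $\alpha_{n+1}$ and $\alpha_n^0$ are projectable by construction and the ODEs propagate projectability --- but it needs to be said; the upstairs formulation avoids the issue entirely.

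The genuine gap is in iii). What you prove is that $\pi\circ C_H$ is a reparametrization of $\pi(S)$, i.e.\ that $C_H$ lifts $\pi(S)$ only in the weak sense of projecting onto it. But the way iii) is used in Proposition \ref{lem:sE:decomposition} (and later, where one writes $C_H=\tilde a$) requires the stronger statement that $C_H$ is the \emph{canonical} lift produced by the transgression construction of Section \ref{sec:const-trans}: only then is the common face of $C_\K$ and $C_H$ (the face $\{t_n=1,\,t_{n+1}=1\}$ of $\lambda$) a representative of $\partial_n([\pi(S)])$, which is precisely what the proposition asserts about the concatenation. The point you are missing is the one the paper's one-line proof records: by \eqref{eq:split}, $\alpha_{n+1}|_{\{t_n=1\}}=-\alpha_n^H(1-t_{n+1})=-\sigma(\alpha_n^B)(1-t_{n+1})$ is \emph{horizontal}, i.e.\ lies in $\im\sigma$, and the initial face of $C_H$ is $d^{n+1}_{n+1,0}\lambda|_{\{t_n=1\}}=d^n_{n,1}S=0$; by Remark \ref{rem:extend:spheres} these two data characterize exactly the lift $\tilde a$ of Section \ref{sec:const-trans} applied to $\pi(S)$ (with its last variable reversed). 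Your argument works entirely downstairs in $A_B$, where the distinction between $\sigma(\alpha_n^B)$ and any other $\pi$-preimage of $\alpha_n^B$ is invisible, so it cannot recover this horizontality and hence cannot identify the boundary of $C_H$ with $\partial_n([\pi(S)])$.
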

\begin{proof}[Proof of claim \ref{lemma:sE:claim3}]\ 

$i)$ follows from \eqref{prop:sE:k:n+1:ic} and by applying the Remark \ref{rem:extend:spheres} to \eqref{prop:sE:n:k} and
 \eqref{prop:sE:n0};

$ii)$ is a consequence of the Claim \ref{lemma:sE:claim1}. More precisely, since $\alpha_n$ has values in $\Gamma(\K)$ on
 $\{t_{n+1}=1\}$, when setting  $(l=n,\ t_{n+1}=1)$ in \eqref{prop:sE:k:l} for all  $k=1,\dots, n-1$, we see that
 $\alpha_k|_{\{t_{n+1}=1\}}$ has values in $\Gamma(\K)$ provided this is true on $\{t_n=0,\ t_{n+1}=1\}$.
 So it is enough to show that $\alpha_k=0$ on ${\{t_n=0,t_{n+1}=1\}}$.

To show this, we set $(t_n=0)$ in \eqref{prop:sE:k:n+1}: since $\alpha_{n+1}$  identically vanishes on ${\{t_n=0\}}$
 (by  \eqref{eq:split}), we conclude that $\alpha_k|_{\{t_n=0,t_{n+1}=1\}}$ vanishes provided it does at $(t_{n+1}=0)$.
 But  $\alpha_k|_{\{t_{n+1}=0,t_n=0\}}=0$ is just \eqref{prop:sE:n0}.

$iii)$ follows from the formula \eqref{eq:split} for $\alpha_{n+1}$ since $\alpha_{n+1}|_{\{t_{n}=1\}}$ coincides with $\alpha_n^H$,
 and by our construction of the transgression map $\partial_n$.\end{proof}

\begin{claim}\label{lemma:sE:claim4}
 All the other faces of $\lambda$ vanish.
\end{claim}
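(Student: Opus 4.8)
The remaining faces to treat are $d^{n+1}_{p,\epsilon}\lambda$ for $p=1,\dots,n-1$ and $\epsilon=0,1$, together with $d^{n+1}_{n,0}\lambda$: the three faces $d^{n+1}_{n+1,0}\lambda=S$, $d^{n+1}_{n+1,1}\lambda=C_\K$ and $d^{n+1}_{n,1}\lambda=C_H$ have already been identified in Claim \ref{lemma:sE:claim3}, and adding these $2n-1$ listed faces accounts for all $2(n+1)$ faces of $TI^{n+1}$. Writing $\lambda=\sum_{k=1}^{n+1}a_k\,\d t_k$ with $a_k=\alpha_k\circ\gamma$, the face $d^{n+1}_{p,\epsilon}\lambda$ is the trivial morphism precisely when $\alpha_k|_{\{t_p=\epsilon\}}=0$ for every $k\neq p$. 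The plan is therefore to reduce the claim to this boundary vanishing of the sections $\alpha_k$.

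The whole argument rests on the mechanism already used to construct $\partial_n$ in Section \ref{sec:const-trans}: if one restricts a defining equation of the shape $\frac{\d\beta}{\d s}=(\text{source})+[\beta,\mu]$ to a coordinate hyperplane on which both the source term and $\mu$ vanish, one is left with the homogeneous linear equation $\frac{\d\beta}{\d s}=0$, so a vanishing initial condition forces $\beta\equiv 0$ there by uniqueness. The one elementary fact I would keep in mind is that a section vanishing identically on a hyperplane $\{t_p=\epsilon\}$ has vanishing $t_k$-derivative along $\{t_p=\epsilon\}$ for every $k\neq p$ (differentiation tangent to the hyperplane), so that source terms built from such derivatives also vanish upon restriction.

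Concretely, I would run this in two layers. Fix a face $d^{n+1}_{p,\epsilon}\lambda$ with $p\le n-1$. First, at the level of the initial data $\alpha_k^0$: since $\alpha_n^0=\alpha_n^H+\alpha_n^\K$ vanishes on $\{t_p=\epsilon\}$ by the choice of $\alpha_n^B,\alpha_n^\K$, restricting \eqref{prop:sE:n:k} to $\{t_p=\epsilon\}$ kills both $\frac{\d\alpha_n^0}{\d t_k}$ (here $k\neq p$, so the differentiation is tangent to the hyperplane) and the bracket; with the initial condition \eqref{prop:sE:n0} this forces $\alpha_k^0|_{\{t_p=\epsilon\}}=0$ for every $k\le n-1$ with $k\neq p$, the case $k=n$ being the vanishing of $\alpha_n^0$ itself. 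Second, at the level of $\alpha_k$: the explicit formula \eqref{eq:split} shows $\alpha_{n+1}$ vanishes on $\{t_p=\epsilon\}$ (because $\alpha_n^H=\sigma(\alpha_n^B)$ does), so restricting \eqref{prop:sE:k:n+1} to $\{t_p=\epsilon\}$ again kills the source $\frac{\d\alpha_{n+1}}{\d t_k}$ and the bracket, while the initial condition \eqref{prop:sE:k:n+1:ic} is exactly the vanishing just obtained; hence $\alpha_k|_{\{t_p=\epsilon\}}=0$ for every $k\le n$ with $k\neq p$. Since also $\alpha_{n+1}|_{\{t_p=\epsilon\}}=0$, every surviving component vanishes and $d^{n+1}_{p,\epsilon}\lambda=0$.

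The remaining face $d^{n+1}_{n,0}\lambda$ is handled the same way but on $\{t_n=0\}$: there $\alpha_{n+1}\equiv0$ directly from the factor $t_n$ in \eqref{eq:split}, so for $k\le n-1$ (the relevant range, since the $a_n$-component is omitted) restricting \eqref{prop:sE:k:n+1} to $\{t_n=0\}$ leaves $\frac{\d\alpha_k}{\d t_{n+1}}=0$ with initial value $\alpha_k^0|_{\{t_n=0\}}=0$ from \eqref{prop:sE:n0}, whence $\alpha_k|_{\{t_n=0\}}=0$; together with $\alpha_{n+1}|_{\{t_n=0\}}=0$ this gives $d^{n+1}_{n,0}\lambda=0$. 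I do not expect a genuine obstacle here, as the content is entirely the uniqueness-from-zero-data principle applied repeatedly. The point demanding care — and the place most likely to hide a slip — is the bookkeeping of \emph{which} source term $\frac{\d\alpha_{n+1}}{\d t_k}$ or $\frac{\d\alpha_n^0}{\d t_k}$ vanishes on \emph{which} hyperplane, since this uses the index condition $k\neq p$; and one must respect the order of the two flows, so that the $t_n$-flow defining $\alpha_k^0$ supplies the correct boundary-vanishing initial data to the subsequent $t_{n+1}$-flow defining $\alpha_k$.
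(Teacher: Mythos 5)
Your proof is correct and follows essentially the same route as the paper's: for each remaining face you restrict the defining equations \eqref{prop:sE:n:k} and \eqref{prop:sE:k:n+1} to the relevant hyperplane, note that the source and bracket terms vanish there because $\alpha_n^0$ and $\alpha_{n+1}$ do, and conclude by uniqueness from zero initial data. The only difference is organizational: you establish the vanishing of the initial data $\alpha_k^0$ on $\{t_p=\epsilon\}$ first and then propagate in $t_{n+1}$, whereas the paper treats the components $a_{n+1}$, $a_n$, $a_l$ in turn and supplies the same fact about $\alpha_l^0$ at the end.
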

\begin{proof}[Proof of \ref{lemma:sE:claim4}]
First, $d_{n,0}^{n+1} \lambda=0$ easily follows from \eqref{eq:split}. Then the remaining faces are $d_{k,\epsilon}\lambda$ for any
 $k<n$ and $\epsilon\in\{0,1\}$. Moreover, $d_{k,\epsilon}^{n+1}\lambda=0$ is by definition equivalent to the following statements:
\vspace{2pt}

\hspace{-5pt}\vspace{2pt}\begin{tabular}{crcl}
 $a)$& $a_{n+1}|_{\{t_k=\epsilon\}}=$&\!\!\!$0,$\vspace{2pt}\\
 $b)$& $a_{n}|_{\{t_k=\epsilon\}}=$&\!\!\!$0,$\vspace{2pt}\\
 $c)$& $a_{l}|_{\{t_k=\epsilon\}}=$&\!\!\!$0,$& for any $l<n,l\neq k$\vspace{2pt}.
\end{tabular}

\noindent \vspace{2pt}Here, we can see that:

$a)$ is clear because of \eqref{eq:split} and by our choices on $\alpha^H_n+\alpha_n^\K$ to vanish when $t_k=0,1$.

$b)$ is true when $(t_{n+1}=0)$ by our choice of $\alpha_n|_{\{t_k=\epsilon,t_{n+1}=0\}}=0$. On the other hand, setting
 $t_k=\epsilon$ in \eqref{prop:sE:k:n+1}, since $\alpha_{n+1}|_{\{t_k=1\}} \forall t_{n+1}$, then necessarily
 $a_{n}|_{\{t_k=\epsilon\}}=0$ for all $t_{n+1}$.

$c)$ may argued as follows: when $t_{n+1}=0$, we have $\alpha_l|_{\{t_k=\epsilon,t_{n+1}=0\}}=\alpha_l^0|_{\{t_k=\epsilon\}}$.
 If one can prove that $\alpha_l^0|_{\{t_k=\epsilon\}}=0$, then necessarily $a_{l}|_{\{t_k=\epsilon\}}=0$ for all $t_{n+1}$;
 this follows by setting ${t_k=0}$ in \eqref{prop:sE:k:n+1}, then using $a)$.

Now to see that $\alpha_l^0|_{\{t_k=0\}}$, one just sets $t_k=0$ in \eqref{prop:sE:n:k}, then uses $b)$.\end{proof}

Thus, we see that $\lambda$ induces a homotopy between $d_{n+1,0}^{n+1} \lambda=S$ and the concatenations of its two other non
 trivial faces $d_{n,1}^{n+1} \lambda \odot_n d_{n+1,0}^{n+1}\lambda$. One can be more explicit by considering $\lambda\circ \d h$
 for some well-chosen smooth map $h:I^{n+1}\to I^{n+1}$.\end{proof}

\begin{remark}
 In the case $n=1$, Proposition \ref{lem:sE:decomposition} says that any $A$-path is homotopic to the concatenation of a $\K$-path with a $A_E$-path with values in $H$.
\end{remark}

\begin{cor} 
 The long sequence \eqref{long:sequence1} is exact at $\pi_n(A_B)$.
\end{cor}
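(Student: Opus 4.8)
The plan is to establish the two inclusions $\im\pi_n\subseteq\ker\partial_n$ and $\ker\partial_n\subseteq\im\pi_n$ that together give exactness at $p^*\pi_n(A_B)$. Proposition \ref{lem:sE:decomposition} is tailored for exactly this, and it will do the work in both directions: the decomposition $S\simeq C_\K\odot C_H$ of an $A_E$-sphere, in which the gluing $(n-1)$-sphere represents $\partial_n(\pi(S))$, reads in one direction as ``anything in the image of $\pi_n$ transgresses to zero'' and in the other as ``a lift with null-homotopic boundary can be capped off to an honest sphere''.

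For $\im\pi_n\subseteq\ker\partial_n$, I would start from $[S]\in\pi_n(A_E)$ represented by a sphere $S$, so that $\pi_n([S])=[\pi(S)]$. Applying Proposition \ref{lem:sE:decomposition} yields $S\simeq C_\K\odot C_H$ with $C_\K\colon TI^n\to\K$ and gluing face an $(n-1)$-sphere $\kappa$ representing $\partial_n(\pi(S))$. The key observation is that $C_\K$ takes values in $\K$ and, by the face computations in the proof of that proposition (Claim \ref{lemma:sE:claim4} together with $d^{n+1}_{n,0}\lambda=0$), all of its faces are trivial except the one equal to $\kappa$; hence $C_\K$ is precisely a null-homotopy of $\kappa$ inside $\K$. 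Therefore $\partial_n(\pi_n([S]))=[\kappa]=0$ in $\pi_{n-1}(\K)$.

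For the reverse inclusion $\ker\partial_n\subseteq\im\pi_n$, which is the main content, I would take $[a]$ with $\partial_n([a])=0$ and run the transgression construction of Section \ref{sec:const-trans}: lifting $a$ through the complete connection produces $C_H\colon TI^n\to A_E$ that lifts $a$, vanishes on $d^n_{n,0}$ and on all side faces, and whose only nontrivial face $\kappa=d^n_{n,1}C_H$ represents $\partial_n([a])$. Since $\partial_n([a])=0$, the $(n-1)$-sphere $\kappa$ is null-homotopic in $\K$, so there is a $\K$-morphism $C_\K\colon TI^n\to\K$ with $d^n_{n,0}C_\K=\kappa$ and all remaining faces trivial. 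Forming the concatenation $S:=C_\K\odot_{t_n}C_H$, the matching of the $\kappa$-faces together with the triviality of all outer faces makes $S$ a genuine $A_E$-sphere, and because $C_\K$ is $\K$-valued we get $\pi(S)=a$. Hence $\pi_n([S])=[a]$, as required.

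The routine part is the bookkeeping of faces needed to see that $C_\K\odot_{t_n}C_H$ is an honest sphere and that $\pi(S)=a$ up to the expected reparametrization; this is already packaged in the Claims proving Proposition \ref{lem:sE:decomposition}. The only genuinely delicate point, which I would emphasize, is the identification $[\kappa]=\partial_n([a])$---that the boundary of the canonical lift is exactly the transgression class---together with the complementary fact that the $\K$-valued piece appearing in the decomposition is a bona fide null-homotopy; both are guaranteed by the construction of the lift in the proof of Theorem \ref{thm:lift} and by the explicit faces of $\lambda$ recorded in Proposition \ref{lem:sE:decomposition}.
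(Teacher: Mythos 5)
Your proposal is correct and follows essentially the same route as the paper: the inclusion $\ker\partial_n\subseteq\im\pi_n$ is obtained by capping off the canonical lift $C_H$ of $a$ with a $\K$-valued null-homotopy of its boundary face, and the inclusion $\im\pi_n\subseteq\ker\partial_n$ follows from Proposition \ref{lem:sE:decomposition}, since the $\K$-valued piece $C_\K$ of the decomposition exhibits the gluing $(n-1)$-sphere, which represents $\partial_n(\pi(S))$, as contractible in $\K$. The only difference is cosmetic: you spell out the face bookkeeping a bit more explicitly than the paper does.
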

\begin{proof}
First we show $\ker \partial_n \subset \im \pi_{n}$. For this consider $S_B \in \mathcal{S}_n(A_B)$ such that $\partial_n[S_B]=0$. We take the lift $\tilde{S}_B$ of  $S_B$ as in Section  \ref{sec:const-trans}. Since $\partial_n[S_B]=0$,  $\tilde{S}_B|_{\{t_n=1\}}$  is a contractile sphere in $\K$. Thus there exists $C_\K: TI^n\to \K$ such that the concatenation $C_K \odot\tilde{S}_B$ along $\tilde{S}_B|_{t_n=1}$ is a $n$-sphere in $A_E$. Thus  $S_B=\pi(C_K \odot\tilde{S}_B)$ represents an element in $\im \pi_n$. 

Reciprocally, we have $\im \pi_{n}\subset \ker \partial_{n}$. Consider $[S_B]\in \im\pi_{n}$, that is  $S_B=\pi(S)$ for some $S\in \mathcal{S}_n(A_E)$. Then by the Proposition \ref{lem:sE:decomposition}, we can replace $S$ by $C_H\odot C_\K$, concatenated along a $(n-1)$-sphere $S_K$ that represents $\partial_{n} [S_B]$. Since $S_K$ bounds $C_\K$, $S_K$ is contractible. Hence  $\partial_{n}[S_B]=0$.
\end{proof}

\begin{cor}\label{cor:A_E}
The long sequence \eqref{long:sequence1} is exact at $\pi_n(A_E)$.
\end{cor}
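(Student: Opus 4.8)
The plan is to prove the two inclusions $\im i_n \subseteq \ker \pi_n$ and $\ker \pi_n \subseteq \im i_n$ fibrewise, at each base point $x_0 \in E$. The first inclusion is immediate: if $[S] = i_n[C]$ for a $\K$-sphere $C$, then $\pi(S) = \pi \circ i \circ C$ is the zero morphism $TI^n \to A_B$, because $\pi \circ i = 0$ as Lie algebroid morphisms; hence $\pi_n[S]$ is the class of the constant sphere, i.e. $\pi_n[S] = 0$.

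For the reverse inclusion I would start with a sphere $S \in \cS_n(A_E)$ based at $x_0$ satisfying $\pi_n[S] = 0$, which means that $\pi(S)$ is null-homotopic in $A_B$. Fix such a null-homotopy: a Lie algebroid morphism $H : TI^{n+1} \to A_B$ with $d^{n+1}_{n+1,0} H = \pi(S)$ and $d^{n+1}_{n+1,1} H = 0$, whose component $H_{n+1}$ vanishes whenever one of $t_1, \dots, t_n$ equals $0$ or $1$, so that $H$ is a genuine homotopy of spheres based at $b_0 = p(x_0)$. As in the earlier proofs, I may use Remark \ref{rem:squares:vs:spheres} to normalise everything to vanish on the face $\{t_n = 1\}$, which simplifies the bookkeeping. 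I would then apply Theorem \ref{thm:lift} with $n+1$ variables, taking $t_{n+1}$ as the lifting direction and the prescribed initial face $\tilde a^0 := S$ over $\{t_{n+1}=0\}$ (this is legitimate since $\pi(S) = H|_{\{t_{n+1}=0\}}$). This produces a canonical morphism $\tilde H : TI^{n+1} \to A_E$ with $\pi \tilde H = H$ and $d^{n+1}_{n+1,0}\tilde H = S$; concretely one sets the homotopy-direction section to $\sigma(\alpha_{n+1})$ and solves the transport equations \eqref{eq:lift} for the remaining $\tilde\alpha_k$ with initial data read off from $S$.

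Next I would verify two claims. First, the terminal face $\tilde S' := d^{n+1}_{n+1,1}\tilde H$ takes values in $\K$; this is automatic, since $\pi(\tilde S') = d^{n+1}_{n+1,1}(\pi \tilde H) = d^{n+1}_{n+1,1} H = 0$, so $\tilde S'$ factors through $\K = \ker \pi$. Second, $\tilde H$ is a genuine homotopy of spheres based at $x_0$, in the sense of Definition \ref{def:homotopy}. Because $\sigma$ is fibrewise linear and $\alpha_{n+1}$ projects onto the data of $H_{n+1}$, which vanishes on the faces $\{t_j = 0,1\}$ for $j \le n$, the lifted section $\sigma(\alpha_{n+1})$ vanishes there as well; and the initial data coming from the sphere $S$ vanishes on the same faces. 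Restricting the transport equations \eqref{eq:lift} to these boundary faces, both the driving term and the initial condition vanish, so by uniqueness of solutions the $\tilde\alpha_k$ vanish on them throughout, exactly as in the construction of the transgression map in Section \ref{sec:const-trans} and its well-definedness proof in Section \ref{sec:well-define}. This forces each slice $\tilde H^{t_{n+1}}$ to be a sphere and keeps the base point fixed at $x_0$. Granting both claims, $\tilde H$ exhibits a homotopy $S \simeq \tilde S'$ in $A_E$ with $\tilde S'$ a $\K$-sphere, so $[S] = i_n[\tilde S']$ lies in $\im i_n$, as desired.

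The main obstacle is precisely the second claim: confirming that the lift $\tilde H$ furnished by Theorem \ref{thm:lift} is an honest homotopy of spheres, i.e. propagating the vanishing of $S$ and of $\sigma(\alpha_{n+1})$ on the boundary faces through the transport equations \eqref{eq:lift}. No new idea is required, since this is the same uniqueness-of-ODE mechanism already exploited in Sections \ref{sec:const-trans} and \ref{sec:well-define}; the only delicate point is keeping the indexing straight when the lifting direction is $t_{n+1}$ rather than $t_n$. I note that the same conclusion can be reached by instead invoking the decomposition $S \simeq C_\K \odot C_H$ of Proposition \ref{lem:sE:decomposition} and contracting $C_H$ along the lifted null-homotopy of $\pi(S)$, but the direct lifting argument above is more self-contained.
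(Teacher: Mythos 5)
Your proof is correct, but it takes a genuinely different route from the paper's. The paper first invokes Proposition \ref{lem:sE:decomposition} to replace $S_E$ by the concatenation $C_\cK \odot \tilde{S}_B$ along a $(n-1)$-sphere $S_\cK$ representing $\partial_n[\pi(S_E)]=0$, then lifts the null-homotopy $h$ of $\pi(S_E)$ as in Section \ref{sec:well-define} to get a $\cK$-homotopy $\tilde h|_{t_n=1}$ contracting $S_\cK$, and finally concatenates $C_\cK$ with $\tilde h|_{t_n=1}$ to exhibit a $\cK$-sphere homotopic to $S_E$. You instead apply Theorem \ref{thm:lift} once, in the $t_{n+1}$-direction, to the null-homotopy $H$ of $\pi(S)$ with prescribed initial face $S$; the terminal face then lands in $\cK$ for the trivial reason that it projects to $d^{n+1}_{n+1,1}H=0$. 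Your version is shorter and more self-contained (it bypasses Proposition \ref{lem:sE:decomposition} entirely), at the cost of having to re-run the boundary-propagation argument for a lift whose initial face is a nonzero sphere rather than $0_{x_0}$; the paper's version recycles machinery it needs anyway for exactness at $\pi_n(A_B)$ and $\pi_{n-1}(\cK)$ and keeps the role of the transgression map visible. One small imprecision in your second claim: the sphere condition on $S$ only forces the \emph{values} $S_k(t)$ to vanish on the faces $\{t_j=0,1\}$, not the extending sections $\tilde\alpha_k^0$ as sections; either invoke Remark \ref{rem:extend:spheres} to choose extensions vanishing identically there, or observe that since $\sigma(\alpha_{n+1})$ vanishes on those faces both the transported sections and the base map $\tilde\gamma$ are constant in $t_{n+1}$ there, so $\tilde a_k$ restricted to such a face equals $S_k$ restricted to it, which is zero. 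With that adjustment the uniqueness-of-ODE mechanism you cite closes the argument exactly as in Sections \ref{sec:const-trans} and \ref{sec:well-define}.
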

\begin{proof}
It is clear that $\im i_n\subset \ker \pi_n$. We now verify that $\ker \pi_n \subset \im i_n$. Take $S_E \in \mathcal{S}_n(A_E)$
such that $\pi(S_E)=:S_B$ is homotopic to 0 via a certain $h$. We take the lift $\tilde{S}_B$ of $S_B$ as in Section \ref{sec:const-trans}, then by
 Prop. \ref{lem:sE:decomposition}, $S_E$ is homotopic to the concatenation $C_\cK \odot \tilde{S}_B$ along
 $S_\cK\in \mathcal{S}_{n-1} (\cK)$, which represents $\partial_n([S_B])=0$. Recalling Section \ref{sec:well-define}, $S_\cK$ is homotopic to 0 via $\tilde{h}|_{t_n=1}$, where $\tilde{h}$ is a well-chosen lift of $h$. 
 Thus the concatenation $C_\cK \odot \tilde{h}|_{t_n=1}$ along $S_\cK$ is a sphere in $\cK$ which is homotopic to
 $C_\cK \odot \tilde{S}_B$ via $id\odot \tilde{h}$ by \eqref{eq:h-boundary}. Thus $[S_E]=   [C_\cK \odot \tilde{h}|_{t_n=1}]$ is in $\im i_n$.
\end{proof}

\begin{cor}
 The sequence \eqref{long:sequence1} is exact at $\pi_{n-1}(\K)$.
\end{cor}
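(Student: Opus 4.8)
The plan is to prove the two inclusions $\im\partial_n\subseteq\ker i_{n-1}$ and $\ker i_{n-1}\subseteq\im\partial_n$ for the segment $p^*\pi_n(A_B)\xrightarrow{\partial_n}\pi_{n-1}(\K)\xrightarrow{i_{n-1}}\pi_{n-1}(A_E)$ of \eqref{long:sequence1}.

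For the first inclusion I would simply reinterpret the lift used to define the transgression. Given $[a]\in\pi_n(A_B)$, let $\ta:TI^n\to A_E$ be the canonical lift of Section \ref{sec:const-trans}, which satisfies $d^n_{n,0}\ta=0$, represents $\partial_n([a])$ on the face $\{t_n=1\}$, and has the property $\ta_i|_{\{t_k=0,1\}}=0$ for every $i\le n$ and $k<n$. Reading $\ta$ with $t_n$ as the homotopy parameter, it is precisely a homotopy in the sense of Definition \ref{def:homotopy} between the two $(n-1)$-spheres $d^n_{n,0}\ta=0$ and $i\bigl(d^n_{n,1}\ta\bigr)$: the first $n-1$ components recover these faces at $t_n=0$ and $t_n=1$, while the vanishing of $\ta_n$ on all side faces is exactly the boundary condition demanded of the last component of a homotopy. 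Hence $i_{n-1}(\partial_n([a]))=[\,i(d^n_{n,1}\ta)\,]=0$, so $i_{n-1}\circ\partial_n=0$.

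For the reverse inclusion, take $S_\K\in\mathcal{S}_{n-1}(\K)$ with $i_{n-1}([S_\K])=0$, witnessed by a homotopy $H:TI^n\to A_E$ from $i(S_\K)$ at $\{t_n=0\}$ to $0$ at $\{t_n=1\}$. After reversing the last parameter I obtain a morphism $\bar{H}$ with $d^n_{n,0}\bar{H}=0$ and $d^n_{n,1}\bar{H}=i(S_\K)$, the latter being $\K$-valued. I would then set $a:=\pi\circ\bar{H}:TI^n\to A_B$ and claim it is an $n$-sphere: its $\{t_n=0\}$ and $\{t_n=1\}$ faces vanish because $\bar{H}$ is zero, respectively $\K$-valued, there, and — once the side faces of $\bar{H}$ have been trivialized, see below — its side faces vanish as well. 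Thus $\bar{H}$ is a lift of the sphere $a$ with vanishing bottom and side faces, of exactly the kind produced in Section \ref{sec:const-trans}. To conclude I must identify $[d^n_{n,1}\bar{H}]$ with $\partial_n([a])$, i.e. compare $\bar{H}$ with the connection lift $\ta$ of $a$. Both lift $a$ and both vanish on the bottom and side faces, so by Remark \ref{rem:extend:spheres} they differ only in their $t_n$-components, and that difference is $\K$-valued since both project to $a$. Interpolating linearly between $\ta_n$ and $\bar{H}_n$ gives a family of lifts of $a$ agreeing on $\{t_n=0\}$, which extends to a single morphism $TI^{n+1}\to A_E$ by Lemma \ref{lemma:path-morphism}; its restriction to $\{t_n=1\}$ projects to $d^n_{n,1}a=0$ and is therefore a $\K$-homotopy between $d^n_{n,1}\ta$ and $d^n_{n,1}\bar{H}$. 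Hence $\partial_n([a])=[d^n_{n,1}\bar{H}]=[S_\K]$, giving $[S_\K]\in\im\partial_n$.

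The main obstacle is the trivialization of the side faces invoked above. On each side face $\{t_k=0,1\}$ with $k<n$ the base map of $H$ is already constant equal to $x_0$ (since $\sharp_E(H_n)$ vanishes there), so $H$ restricts to a morphism valued in the isotropy $\ker\sharp_E|_{x_0}$; its components need not vanish, and their images under $\pi$ need not lie in $\K$, which is precisely what would obstruct $a=\pi\bar{H}$ from being a sphere. Removing this isotropy data amounts to replacing the free null-homotopy $H$ by a based one, the analogue for Lie algebroids of the standard fact that a based map which is freely null-homotopic is based null-homotopic. I expect to accomplish this through a reparametrization in the variables $t_1,\dots,t_{n-1}$ along the lines of Remark \ref{rem:squares:vs:spheres}, carried out compatibly with the morphism equations by means of Proposition \ref{prop:brackets} and Lemma \ref{lemma:path-morphism}; making the reparametrization respect the Lie algebroid morphism condition while killing the boundary isotropy is where the genuine difficulty lies.
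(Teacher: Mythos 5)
Your first inclusion is exactly the paper's (the canonical lift $\ta$, read with $t_n$ as homotopy parameter, null-homotopes $i(d^n_{n,1}\ta)$ in $A_E$), so let me concentrate on the reverse inclusion. The step you defer as ``where the genuine difficulty lies'' --- replacing the free null-homotopy by one with trivial side faces --- is in fact vacuous, so your admitted gap closes immediately: by Definition \ref{def:homotopy} the last component $H_n$ vanishes identically on each side face $\{t_l=\epsilon\}$, $l<n$; restricting the morphism equations to that face gives $\partial_{t_n}H_k=\partial_{t_k}H_n+(\text{term bilinear in }H_k,H_n)=0$ for $k\neq l,n$, and $H_k|_{\{t_l=\epsilon,\,t_n=0\}}=(S_\K)_k|_{\{t_l=\epsilon\}}=0$ because $S_\K$ is a sphere, whence $H_k\equiv 0$ on the whole face. (The normal component $H_l$ there is irrelevant to the sphere condition for $\pi(H)$.) This is why the paper can simply assert that $S_B:=\pi(H)$ ``is easily seen to be a sphere''. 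Two smaller points in your comparison step: two lifts of $a$ with the same zero bottom face do \emph{not} ``differ only in their $t_n$-components'' --- by Remark \ref{rem:extend:spheres} the remaining components are \emph{determined} by the $t_n$-component, so they change along with it --- but since your interpolation only feeds in the $t_n$-components this is harmless; and you still owe the checks that each interpolated morphism projects to $a$ (true: its $t_n$-component projects to $a_n$ and its bottom face is $0$, so its $\pi$-image is $a$ by uniqueness) and that the $(n+1)$-st component supplied by Lemma \ref{lemma:path-morphism} is $\K$-valued on $\{t_n=1\}$ and vanishes on the side faces.

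With those points filled in your argument works, but it is a genuinely different route from the paper's. You treat the (reversed) null-homotopy $\bar H$ itself as a lift of $a=\pi\circ\bar H$ and prove that the transgression is independent of the choice of lift, by interpolating $t_n$-components --- in effect extending the connection-independence argument of Section \ref{sec:well-define} to an arbitrary pair of lifts. The paper instead reruns the construction of Proposition \ref{lem:sE:decomposition} on $H$, taking the initial conditions $\alpha^0_k$ to be $\K$-sections extending $S_\K$ rather than zero: the resulting cube $\lambda:TI^{n+1}\to A_E$ has faces $H$, a $\K$-valued face $C_\K$, and a face $C_H$ equal to the canonical lift of $S_B$, and $C_\K$ is then directly the required $\K$-homotopy from $S_\K$ to $\partial_n([S_B])_{x_0}$. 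Both arguments rest on the same flow machinery; yours isolates a clean ``independence of the lift'' statement that the paper never states explicitly, at the cost of the extra verifications above.
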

\begin{proof}
It is clear that $\im \partial_{n}\subset \ker i_{n-1}$. In order to prove the inverse inclusion, we need to show that any $\K$-sphere
 $S_\K$ which is contractile in $A_E$, is $\K$-homotopic to one of the form $\partial_{n}(S_B)_{x_0}$,
 for some sphere $S_B\in \mathcal{S}_{n}(A_B)$ (to be defined). Here, the notation $\partial_{n}(S_B)_{x_0}$
 stands for $d_{n,1}^n\tilde{a}$, with $\tilde{a}$ as in \ref{sec:const-trans}, and $x_0$ is the point $S_\K$ is based at. 

To do this, we will adapt the proof of Proposition \ref{lem:sE:decomposition}: we replace $S$ by $H:TI^n\to A_E$, an $A_E$-homotopy
 between $S_\K=d^{n}_{n,0}H$ and the trivial morphism $0=d_{1,n}^nH$. Here, instead of the initial conditions \eqref{prop:sE:n0},
 we choose $\alpha_k^0$ (for $k=1,\dots,n-1$) to be sections of $\K$ that extend $S_\K$. Then Claim \ref{lemma:sE:claim1} remains true,
 while Claim \ref{lemma:sE:claim3} reads:
\begin{enumerate}[i)]
 \item $d_{n+1,0}^{n+1}\lambda=H$,
 \item $d_{n+1,1}^{n+1}\lambda=:C_\K$ has values in $\K$,
 \item \hspace{6pt}$d_{n,1}^{n+1}\lambda=:C_H$ lifts $\pi(H)$.
\end{enumerate}
Define now $S_B:=\pi(H)$, it is easily seen to be a sphere in $A_B$.
 Moreover $C_H$ bounds $\partial_{n} (S_B)_{x_0}$; in fact $C_H=\tilde{a}$ by construction.
 Then we notice that since $\alpha_{n+1}$ vanishes on $\{t_n=0\}$, the solutions $\tilde{\alpha}_k|_{\{t_n=0\}}$, $k=1,\dots,n-1$ are independent
 of $t_{n+1}$. It makes it easy to show then that $C_\K$ defines a $\K$-homotopy between $S_\K$ the boundary of $C_H$.\end{proof}

\bibliographystyle{habbrv}
\bibliography{sf12}

\def\cprime{$'$} \def\cprime{$'$} \def\cprime{$'$} \def\cprime{$'$}
\begin{thebibliography}{10}

\bibitem{Br}
O.~Brahic.
\newblock {Extensions of Lie brackets}.
\newblock {\em Journal of Geometry and Physics 60 (2010), pp. 352-374}.

\bibitem{cw}
A.~{Cannas da Silva} and A.~Weinstein.
\newblock {\em Geometric models for noncommutative algebras}, volume~10 of {\em
  Berkeley Mathematics Lecture Notes}.
\newblock American Mathematical Society, Providence, RI, 1999.

\bibitem{cr}
M.~Crainic.
\newblock Prequantization and {L}ie brackets.
\newblock {\em J. Symplectic Geom.}, 2(4):579--602, 2004.

\bibitem{cf}
M.~Crainic and R.~L. Fernandes.
\newblock Integrability of {L}ie brackets.
\newblock {\em Ann. of Math. (2)}, 157(2):575--620, 2003.

\bibitem{CZ}
M.~Crainic and C.~Zhu.
\newblock Integrability of {J}cobi and {P}oisson structures.
\newblock {\em Ann. Inst. Fourier}, 57(4):1181--1216, 2007.

\bibitem{Dufour1}
J.-P. Dufour.
\newblock Normal forms for {L}ie algebroids.
\newblock In {\em Lie algebroids and related topics in differential geometry
  ({W}arsaw, 2000)}, volume~54 of {\em Banach Center Publ.}, pages 35--41.
  Polish Acad. Sci., Warsaw, 2001.

\bibitem{DuZu}
J.-P. Dufour and N.~T. Zung.
\newblock {\em Poisson structures and their normal forms}, volume 242 of {\em
  Progress in Mathematics}.
\newblock Birkh{\"a}user Verlag, Basel, 2005.

\bibitem{lie-group}
J.~J. Duistermaat and J.~A.~C. Kolk.
\newblock {\em Lie groups}.
\newblock Universitext. Springer-Verlag, Berlin, 2000.

\bibitem{Fe}
R.~L. Fernandes.
\newblock Lie algebroids, holonomy and characteristic classes.
\newblock {\em Adv. Math.}, 170(1):119--179, 2002.

\bibitem{kotov-strobl}
A.~Kotov and T.~Strobl.
\newblock {Characteristic classes associated to {Q}-bundles}, arXiv:0711.4106.

\bibitem{MK2}
K.~C.~H. Mackenzie.
\newblock {\em General theory of {L}ie groupoids and {L}ie algebroids}, volume
  213 of {\em London Mathematical Society Lecture Note Series}.
\newblock Cambridge University Press, Cambridge, 2005.

\bibitem{Pradines}
J.~Pradines.
\newblock Lie groupoids as generalized atlases.
\newblock {\em Cent. Eur. J. Math.}, 2(5):624--662 (electronic), 2004.

\bibitem{tz}
H.-H. Tseng and C.~Zhu.
\newblock Integrating {L}ie algebroids via stacks.
\newblock {\em Compos. Math.}, 142(1):251--270, 2006.

\bibitem{van-est}
W.~T. van Est.
\newblock Rapport sur les {$S$}-atlas.
\newblock {\em Ast{\'e}risque}, (116):235--292, 1984.
\newblock Transversal structure of foliations (Toulouse, 1982).

\bibitem{Se}
P.~\v{S}evera.
\newblock Poisson actions up to homotopy and their quantization.
\newblock {\em Lett. Math. Phys.}, 77(2):199--208, 2006.

\bibitem{We1}
A.~Weinstein.
\newblock The local structure of {P}oisson manifolds.
\newblock {\em J. Differential Geom.}, 18(3):523--557, 1983.

\end{thebibliography}

\end{document}